\numberwithin{equation}{section}
\newtheorem{thm}{Theorem}[section]
\newtheorem{lem}{Lemma}[section]
\newtheorem{prop}{Proposition}[section]
\theoremstyle{definition}
\newtheorem{defn}{Definition}[section]
\theoremstyle{remark}
\newtheorem{rem}{Remark}[section]
\begin{document}

\title{Global well-posedness to the 3-D incompressible inhomogeneous
 Navier-Stokes equations with a class of large velocity}

\author{Cuili Zhai,  Ting Zhang \\
  Department of Mathematics, Zhejiang University,
Hangzhou 310027, China}
\date{}
\maketitle

\begin{abstract}
 In this article, we consider the global well-posedness to the 3-D
incompressible inhomogeneous Navier-Stokes equations with a class of
large velocity. More precisely, assuming $a_0 \in
\dot{B}_{q,1}^{\frac{3}{q}}(\mathbb{R}^3)$ and $u_0=(u_0^h,u_0^3)\in
\dot{B}_{p,1}^{-1+\frac{3}{p}}(\mathbb{R}^3)$ for $p,q \in (1,6)$ with $
\sup(\frac{1}{p}, \frac{1}{q})\leq\frac{1}{3}+ \inf (\frac{1}{p},
\frac{1}{q})$, we prove that if
$C\|a_0\|_{\dot{B}_{q,1}^{\frac{3}{q}}}^{\alpha}(\|u_0^3\|_{\dot{B}_{p,1}^{-1+\frac{3}{p}}}/{\mu}+1)\leq1$,
$\frac{C}{\mu}(\|u_0^h\|_{\dot{B}_{p,1}^{-1+\frac{3}{p}}}+\|
u_0^3\|_{\dot{B}_{p,1}^{-1+\frac{3}{p}}}^{1-\alpha}\|
u_0^h\|_{\dot{B}_{p,1}^{-1+\frac{3}{p}}}^{\alpha})\leq 1$, then the system
has a unique global solution $a\in\widetilde{\mathcal
{C}}([0,\infty);\dot{B}_{q,1}^{\frac{3}{q}}(\mathbb{R}^3))$,
$u\in\widetilde{\mathcal
{C}}([0,\infty);\dot{B}_{p,1}^{-1+\frac{3}{p}}(\mathbb{R}^3))\cap
L^1(\mathbb{R}^+;\dot{B}_{p,1}^{1+\frac{3}{p}}(\mathbb{R}^3))$. It
improves the recent result of M. Paicu, P. Zhang (J. Funct. Anal.
262 (2012) 3556-3584), where the exponent form of the initial
smallness condition is replaced by a polynomial form.

\textbf{Keywords}: Inhomogeneous Navier-Stokes equations; Well-posedness;
Littlewood-Paley theory.

\textbf{2010 AMS Subject Classification}: 35Q35, 76W05.
\end{abstract}

\section{Introduction.}
In this paper, we consider the global well-posedness of the
following 3-D incompressible inhomogeneous Navier-Stokes equations
with initial data in the critical Besov spaces
\begin{equation}\label{a}
 \left\{\begin{array}{l}
\partial_t{\rho}+ \displaystyle\mathrm{div}(\rho u)=0,
\ \ (t,x)\in\ \mathbb{R}^+\times\ \mathbb{R}^{3},\\
\partial_t(\rho u)+\displaystyle\mathrm{div}(\rho u\otimes u)-\displaystyle\mathrm{div}(2\mu\mathcal {M})+\nabla \Pi=0,\\
\displaystyle\mathrm{div}u=0,\\
\ \rho|_{t=0}=\rho_0,\ \ \rho u|_{t=0}=m_0,\\
\end{array}\right.
\end{equation}
where $\rho$, $u=(u_1,u_2,u_3)$ stand for the density and velocity
field of the fluid respectively, $\mathcal
{M}=\frac{1}{2}(\partial_iu_j+\partial_ju_i)$, $\Pi$ is a scalar
pressure function, and in general, the viscocity coefficient
$\mu(\rho)$ is a smooth, positive function on $[0,\infty)$. Such
system describes a fluid which is obtained by mixing two miscible
fluids that are incompressible and that have different densities. It
may also describe a fluid containing a melted substance. One may
check \cite{PL96} for the detailes derivation of this system.

 When $\mu(\rho)$ is independent of $\rho$, i.e. $\mu$ is a
positive constant, and $\rho_0$ is bounded away from 0, many authors
showed their investigations on this system, see \cite{AV73, AV90,
AV74, S90} etc. Kazhikov \cite{AV74} proved that (\ref a) has a
unique local smooth solution with regular initial data. In addition,
they proved the global existence of strong solutions to this system
for small data in three space dimensions and all data in two
dimensions. However, the uniqueness of both type weak solutions has
not be solved. Ladyzhenskaya and Solonnikov \cite{OA75} first
addressed the question of unique resolvability of (\ref a). And
recently, similar results were obtained by Danchin \cite{RD03, RD04}
in $\mathbb{R}^N$ with initial data in the almost critical spaces,
and it \cite {RD03} generalized the result by Fujita and Kato
\cite{FK64} denoted to the classical Navier-Stokes system.

 In general, $\mu=\mu(\rho)$, under some special assumption, a lot of results about
stability and well-posedness of Navier-Stokes equations were
received by many authors, such as \cite{A07, AGZ11, AGZ12, AGZ13,
AP07, PL89, PL96} etc. Diperna and Lions \cite{PL89, PL96} proved
the global existence of weak solutions to (\ref{a}) in any space
dimensions. Yet the uniqueness and regularities of such weak
solutions are big open questions even in two space dimension, as was
mentioned by Lions in \cite{PL96}. On the other hand, Abidi, Gui and
Zhang \cite{AGZ11} investigated the large time decay and stability
to any given global smooth solutions of (\ref{a}), which in
particular implies the global well-posedness of 3-D inhomogeneous
Navier-Stokes equations with axi-symmetric initial data and without
swirl for the initial velocity field provided that the initial
density is close enough to a positive constant.

 When the density $\rho$ is away from zero, we denote by
$a\stackrel{def}{=}\frac{1}{\rho}-1$ and
$\widetilde{\mu}(a)\stackrel{def}{=}\mu(\frac{1}{1+a})$, then the
system (\ref{a}) can be equivalently reformulated as
\begin{equation*}
 (\mbox{INS})\ \ \ \ \left\{\begin{array}{l}
\partial_t a+ u\cdot\nabla a=0,
\ \ (t,x)\in\ \mathbb{R}^+\times\ \mathbb{R}^N,\\
\partial_t u+u\cdot\nabla u+(1+a)(\nabla \Pi-\displaystyle\mathrm{div}(2\widetilde{\mu}(a)\mathcal {M}))=0,\\
\displaystyle\mathrm{div}u=0,\\
(a,u)|_{t=0}=(a_0,u_0).
\end{array}\right.
\end{equation*}

 In \cite{A07}, Abidi proved if $1<p<2N$,
$0<\underline{\mu}<\widetilde{\mu}(a)$, $u_0\in
\dot{B}_{p,1}^{\frac{N}{p}-1}(\mathbb{R}^N)$ and $a_0\in
\dot{B}_{p,1}^{\frac{N}{p}}(\mathbb{R}^N)$, then (INS) has a global
solution provided that
$\|a_0\|_{\dot{B}_{p,1}^{\frac{N}{p}}}+\|u_0\|_{\dot{B}_{p,1}^{\frac{N}{p}-1}}\leq
c_0$ for some $c_0$ sufficiently small. Furthermore, the solution
thus obtained  is unique if $1<p\leq N$. And this result generalized
the corresponding results in \cite{RD03, RD04}.

 For simplicity, in this paper,  we just take $\mu(\rho)=\mu$
and the space dimension $N=3$. Thus (INS) becomes
\begin{equation}\label{equation}
 \left\{\begin{array}{l}
\partial_t{a}+u\cdot\nabla a=0,
\ \ (t,x)\in\ \mathbb{R}^+\times\ \mathbb{R}^{3},\\
\partial_t{u}+u\cdot\nabla u+(1+a)(\nabla\Pi-\mu\Delta u)=0,\\
\displaystyle\mathrm{div} u=0,\\
\ (a, u)|_{t=0}=(a_0, u_0).
\end{array}\right.
\end{equation}

 Before we present our main result in this paper, let us recall
the following results from Abidi, Paicu \cite{AP07},  Danchin,   Mucha \cite{DM12} and Paicu, Zhang
\cite{PP12}. We denote
$$
E_{p,q,T}\stackrel{def}{=}\left\{(a,u,\nabla \Pi)\left|\begin{array}{l}
a\in \widetilde{\mathcal
{C}}_T(\dot{B}_{q,1}^{\frac{3}{q}}(\mathbb{R}^3)), u\in
\widetilde{\mathcal
{C}}_T(\dot{B}_{p,1}^{-1+\frac{3}{p}}(\mathbb{R}^3))\cap
L_T^1(\dot{B}_{p,1}^{1+\frac{3}{p}}(\mathbb{R}^3))\\
 \nabla\Pi\in
L_T^1(\dot{B}_{p,1}^{-1+\frac{3}{p}}(\mathbb{R}^3))
\end{array}\right.
\right\},
$$
whereas
$$\widetilde{\mathcal
{C}}_T(\dot{B}^{s}_{p,r}(\mathbb{R}^3))\stackrel{def}{=}\mathcal
{C}([0,T]; \dot{B}^{s}_{p,r}(\mathbb{R}^3))\cap \widetilde{L}^\infty(0,T;
\dot{B}^{s}_{p,r}(\mathbb{R}^3)).$$
For simplify, we denote $E_{p,q}$  when $T=\infty$.
\begin{thm}[see \cite{AP07,DM12}]\label{abidi theorem}
Let $q, p$ satisfy $q, p \in (1,\infty)$ so that $\sup
(\frac{1}{p},\frac{1}{q})\leq \frac{1}{3}+\inf
(\frac{1}{p},\frac{1}{q})$ and $\frac{1}{p}+\frac{1}{q}>
\frac{1}{3}$. Let $a_0 \in \dot{B}_{q,1}^{\frac{3}{q}}(\mathbb{R}^3),
u_0\in \dot{B}_{p,1}^{-1+\frac{3}{p}}(\mathbb{R}^3)$ with
$\|a_0\|_{\dot{B}_{q,1}^{\frac{3}{q}}} \leq c$ for some sufficiently small
$c$, then the system (\ref{equation}) has a unique local solution $(a,u,\nabla \Pi)$
on $[0,T]$ such that $(a,u,\nabla \Pi)\in E_{p,q,T}$.
 Moreover, if
$\|u_0\|_{\dot{B}_{p,1}^{-1+\frac{3}{p}}} \leq c'\mu$ for $c'$ small
enough, then the solution exists on $[0, +\infty)$.
\end{thm}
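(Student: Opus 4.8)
The plan is to recast (\ref{equation}) as a constant-coefficient Stokes system driven by variable-coefficient source terms, and then to run a fixed-point scheme in a ball of $E_{p,q,T}$ built on the two linear estimates natural to the critical Besov setting. Expanding the momentum equation and using $\mathrm{div}\,u=0$, I would write
$$\partial_t u-\mu\Delta u+\nabla\Pi=-\,u\cdot\nabla u-a\,\nabla\Pi+\mu\, a\,\Delta u=:F,\qquad\mathrm{div}\,u=0,$$
keeping the transport equation $\partial_t a+u\cdot\nabla a=0$ for the density fluctuation. The two building blocks are: (i) the transport estimate
$$\|a\|_{\widetilde{L}^\infty_T(\dot{B}_{q,1}^{3/q})}\leq\|a_0\|_{\dot{B}_{q,1}^{3/q}}\exp\Big(C\!\int_0^T\!\|\nabla u\|_{\dot{B}_{p,1}^{3/p}}\,d\tau\Big),$$
which preserves the $\dot{B}_{q,1}^{3/q}$-size of $a$ up to the exponential of $\|u\|_{L^1_T(\dot{B}_{p,1}^{1+3/p})}$ (using $\dot{B}_{p,1}^{3/p}\hookrightarrow L^\infty$ for the Lipschitz control); and (ii) the maximal-regularity estimate for the Stokes system,
$$\|u\|_{\widetilde{L}^\infty_T(\dot{B}_{p,1}^{-1+3/p})}+\mu\|u\|_{L^1_T(\dot{B}_{p,1}^{1+3/p})}\lesssim\|u_0\|_{\dot{B}_{p,1}^{-1+3/p}}+\|F\|_{L^1_T(\dot{B}_{p,1}^{-1+3/p})}.$$

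The main obstacle, and the reason $\|a_0\|_{\dot{B}_{q,1}^{3/q}}\leq c$ is imposed, is the implicitly defined pressure with variable coefficient. Taking the divergence of the momentum equation gives the elliptic problem
$$\mathrm{div}\big((1+a)\nabla\Pi\big)=\mathrm{div}\big(\mu(1+a)\Delta u-u\cdot\nabla u\big),$$
that is $\Delta\Pi=-\,\mathrm{div}(a\nabla\Pi)+\mathrm{div}\big(\mu(1+a)\Delta u-u\cdot\nabla u\big)$. I would solve it by a perturbation argument: the operator $\Pi\mapsto(-\Delta)^{-1}\mathrm{div}(a\nabla\Pi)$ has $\dot{B}_{p,1}^{-1+3/p}$-operator norm $\lesssim\|a\|_{\dot{B}_{q,1}^{3/q}}$, so smallness of $a$ makes $\mathrm{Id}+(-\Delta)^{-1}\mathrm{div}(a\nabla\cdot)$ invertible and yields $\|\nabla\Pi\|_{L^1_T(\dot{B}_{p,1}^{-1+3/p})}\lesssim\|\mu a\Delta u-u\cdot\nabla u\|_{L^1_T(\dot{B}_{p,1}^{-1+3/p})}$. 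The index restrictions $\sup(1/p,1/q)\leq 1/3+\inf(1/p,1/q)$ and $1/p+1/q>1/3$ are precisely what Bony's paraproduct and remainder estimates require in order to bound the mixed products $a\Delta u$ and $a\nabla\Pi$ with distinct integrability exponents $p\neq q$, and the convection $u\cdot\nabla u$, in $\dot{B}_{p,1}^{-1+3/p}$.

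With these estimates in hand, the nonlinear terms obey
$$\|u\cdot\nabla u\|_{L^1_T(\dot{B}_{p,1}^{-1+3/p})}\lesssim\|u\|_{\widetilde{L}^\infty_T(\dot{B}_{p,1}^{-1+3/p})}\|u\|_{L^1_T(\dot{B}_{p,1}^{1+3/p})},$$
together with $\|\mu a\Delta u\|_{L^1_T(\dot{B}_{p,1}^{-1+3/p})}\lesssim\mu\|a\|_{\widetilde{L}^\infty_T(\dot{B}_{q,1}^{3/q})}\|u\|_{L^1_T(\dot{B}_{p,1}^{1+3/p})}$ and the analogous bound for $a\nabla\Pi$. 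Feeding these back into (ii) and (i), I would define the iteration map on a suitable closed ball of $E_{p,q,T}$ and show it is a contraction. For the local statement $u_0$ is large, so the smallness is bought from time: since $\int_0^T\|e^{\mu\tau\Delta}u_0\|_{\dot{B}_{p,1}^{1+3/p}}\,d\tau\to0$ as $T\to0$ by dominated convergence, the free evolution has small $L^1_T(\dot{B}_{p,1}^{1+3/p})$ norm for $T$ small, which controls the quadratic term and the exponent in (i), while smallness of $a_0$ controls the pressure and viscosity perturbations uniformly in $T$.

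For the global statement the extra hypothesis $\|u_0\|_{\dot{B}_{p,1}^{-1+3/p}}\leq c'\mu$ makes $\frac{1}{\mu}\|u\|_{\widetilde{L}^\infty_T(\dot{B}_{p,1}^{-1+3/p})}\approx\frac{1}{\mu}\|u_0\|_{\dot{B}_{p,1}^{-1+3/p}}$ small, and this is exactly the coefficient multiplying $\mu\|u\|_{L^1_T(\dot{B}_{p,1}^{1+3/p})}$ in the bound for $u\cdot\nabla u$; hence the $u$-estimate closes on $[0,\infty)$ with $\int_0^\infty\|\nabla u\|_{\dot{B}_{p,1}^{3/p}}$ small, the exponential factor in (i) stays bounded, and $a$ remains small for all time. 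Uniqueness I would obtain by estimating the difference of two solutions at regularity lowered by one, where the same transport and Stokes estimates close thanks to the smallness of $a_0$. The crux throughout is the simultaneous handling of the non-smoothing transport of $a$, the parabolic smoothing of $u$, and the variable-coefficient elliptic pressure, with $\|a_0\|_{\dot{B}_{q,1}^{3/q}}$ small serving to decouple them at leading order.
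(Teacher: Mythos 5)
This theorem is quoted from \cite{AP07,DM12} rather than proved in the paper, and the paper's remark immediately following it records the relevant division of labor: Abidi--Paicu proved existence, but uniqueness only under $\frac1p+\frac1q\ge\frac23$, while uniqueness in the full range $\frac1p+\frac1q>\frac13$ is due to Danchin--Mucha's Lagrangian method. Your existence scheme (transport estimate for $a$, Stokes maximal regularity for $u$, pressure solved by a Neumann-series perturbation using $\|a\|_{\dot B_{q,1}^{3/q}}\le c$, contraction in a ball of $E_{p,q,T}$, local smallness bought from $\|e^{\mu t\Delta}u_0\|_{L^1_T(\dot B_{p,1}^{1+3/p})}\to0$ as $T\to 0$, global smallness from $\|u_0\|_{\dot B_{p,1}^{-1+3/p}}\le c'\mu$) is essentially the Abidi--Paicu argument and is sound; your identification of the index constraints $\sup(\frac1p,\frac1q)\le\frac13+\inf(\frac1p,\frac1q)$ and $\frac1p+\frac1q>\frac13$ with the product laws needed for $a\nabla\Pi$, $a\Delta u$ and $u\cdot\nabla u$ is also correct.

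The genuine gap is the uniqueness step. Estimating the difference of two solutions ``at regularity lowered by one'' means $\delta a\in\dot B_{q,1}^{\frac3q-1}$ and $\delta u\in\widetilde L^\infty_T(\dot B_{p,1}^{-2+\frac3p})\cap L^1_T(\dot B_{p,1}^{\frac3p})$, and in the momentum difference equation you must control $\delta a\,(\nabla\Pi_2-\mu\Delta u_2)$ in $L^1_T(\dot B_{p,1}^{-2+\frac3p})$. The product law behind this, $\dot B_{q,1}^{\frac3q-1}\times\dot B_{p,1}^{-1+\frac3p}\to\dot B_{p,1}^{-2+\frac3p}$, requires (when $\frac1p+\frac1q\le1$) the sum of the two regularity indices to be positive, i.e. $\bigl(\frac3q-1\bigr)+\bigl(\frac3p-1\bigr)>0$, which is exactly $\frac1p+\frac1q>\frac23$. (The transport difference estimate for $\delta a$ itself only needs $\frac1p+\frac1q>\frac13$; the obstruction sits in the coupling terms of the momentum equation.) So your Eulerian difference argument closes only in the Abidi--Paicu range $\frac1p+\frac1q\ge\frac23$ and breaks down precisely in the regime $\frac13<\frac1p+\frac1q<\frac23$ that the theorem claims. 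To reach the full range one needs the Lagrangian reformulation of \cite{DM12}: in flow coordinates the density is frozen at its initial value $a_0(y)$, the two solutions being compared carry the same time-independent density, and no difference estimate for the transport equation --- hence no loss of one derivative on $a$ --- is ever required. As written, your proposal proves the existence statement and the uniqueness statement only in the restricted range, not the theorem as stated.
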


 Indeed, Abidi and Paicu \cite{AP07} only proved that the solution
is unique when $\frac{1}{p}+\frac{1}{q}\geq \frac{2}{3}$, and very
recently, Danchin and Mucha \cite{DM12} improved the uniqueness for
$\frac{1}{p}+\frac{1}{q}>\frac{1}{3}$ through lagrangian approach.

 Motivated by \cite{GP10, PP11, ZT09}, M. Paicu, P. Zhang
proved the following theorem in \cite{PP12} by applying the
technology of a weighted Chemin-Lerner type norm.
\begin{thm}[see \cite{PP12}] \label{theorem zhangping}
Let $1<q \leq p<6$ with $\frac{1}{q}-\frac{1}{p} \leq \frac{1}{3}$.
There exist positive constants $c_0$ and $C_0$ such that, for any
data $a_0 \in \dot{B}_{q,1}^{\frac{3}{q}}(\mathbb{R}^3)$ and $u_0 =(u_0^h,
u_0^3)\in \dot{B}_{p,1}^{-1+\frac{3}{p}}(\mathbb{R}^3)$ verifying
\begin{equation}\label{pp condition}
  \eta\stackrel{def}{=}(\mu\|a_0\|_{\dot{B}_{q,1}^{\frac{3}{q}}}+\|u_0^h\|_{\dot{B}_{p,1}^{-1+\frac{3}{p}}})\exp\{C_0\|u_0^3\|_{\dot{B}_{p,1}^{-1+\frac{3}{p}}}^2/{\mu^2}\}
\leq c_0\mu,
\end{equation}
the system (\ref{equation}) has a unique global solution $(a,u,\nabla \Pi)\in E_{p,q}$.
\end{thm}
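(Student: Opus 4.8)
The plan is to promote the local solution furnished by Theorem~\ref{abidi theorem} to a global one by establishing a priori bounds that stay uniform on the maximal existence interval $[0,T^\ast)$. Since the smallness of $\eta$ in \eqref{pp condition} forces $\|a_0\|_{\dot B^{3/q}_{q,1}}$ to be small, Theorem~\ref{abidi theorem} already produces a unique maximal solution $(a,u,\nabla\Pi)\in E_{p,q,T^\ast}$, so it suffices to show that its $E_{p,q,T}$-norm does not blow up as $T\to T^\ast$; this will give $T^\ast=+\infty$.

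The structural idea, following the weighted Chemin--Lerner scheme, is that only the horizontal velocity $u^h$ and the density $a$ need be small, whereas the vertical component $u^3$ may be large. First I would treat $u^3$ by itself: writing its equation as the perturbed heat equation $\partial_t u^3-\mu\Delta u^3=-u\cdot\nabla u^3-a(\mu\Delta u^3-\partial_3\Pi)+\mu a\Delta u^3$ and invoking the maximal-regularity estimate for the heat semigroup in $\widetilde L^r_T(\dot B^s_{p,1})$ together with the Besov product laws, one bounds $\|u^3\|_{\widetilde L^\infty_T(\dot B^{-1+3/p}_{p,1})}+\mu\|u^3\|_{L^1_T(\dot B^{1+3/p}_{p,1})}$ in terms of $\|u_0^3\|_{\dot B^{-1+3/p}_{p,1}}$ and lower-order contributions. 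In particular the linear part yields $\int_0^T\|u^3(\tau)\|_{\dot B^{3/p}_{p,1}}^2\,d\tau\lesssim\|u_0^3\|_{\dot B^{-1+3/p}_{p,1}}^2/\mu$, which is the source of the squared vertical norm appearing in \eqref{pp condition}.

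The heart of the matter is to close the estimate for $u^h$ and $a$ in the presence of the large advection term $u^3\partial_3 u^h$. To this end I would introduce the weight $\theta(t)\stackrel{def}{=}\int_0^t\|u^3(\tau)\|_{\dot B^{3/p}_{p,1}}^2\,d\tau$ and measure the horizontal velocity in the norm obtained by inserting $e^{-\lambda\theta(t)}$ into the Chemin--Lerner space. Applying the dyadic block $\Delta_j$ to the $u^h$-equation and carrying the weight through the energy estimate, the dangerous term $\int_0^t\|u^3\|_{\dot B^{3/p}_{p,1}}\|\Delta_j u^h\|\,e^{-\lambda\theta}\,d\tau$ is handled by Young's inequality: the factor $\|u^3\|_{\dot B^{3/p}_{p,1}}^2$ is absorbed against $\lambda\theta'(t)=\lambda\|u^3\|_{\dot B^{3/p}_{p,1}}^2$ generated by differentiating the weight, so that for $\lambda=C_0/\mu$ with $C_0$ large the contribution of this term becomes a small perturbation. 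The density is propagated by the transport estimate $\|a(t)\|_{\dot B^{3/q}_{q,1}}\le\|a_0\|_{\dot B^{3/q}_{q,1}}\exp\bigl(C\int_0^t\|\nabla u\|_{\dot B^{3/p}_{p,1}}\,d\tau\bigr)$, and the pressure is recovered from the elliptic equation $\operatorname{div}((1+a)\nabla\Pi)=\operatorname{div}(\mu(1+a)\Delta u-u\cdot\nabla u)$; the hypothesis $\sup(\tfrac1p,\tfrac1q)\le\tfrac13+\inf(\tfrac1p,\tfrac1q)$ is precisely what makes these product and elliptic estimates close in the chosen spaces.

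Assembling the three estimates into one nonlinear inequality for $X(T)=\|a\|_{\widetilde L^\infty_T(\dot B^{3/q}_{q,1})}+\mu^{-1}\|u^h\|_{\widetilde L^\infty_{T,\lambda}(\dot B^{-1+3/p}_{p,1})}+\|u^h\|_{L^1_{T,\lambda}(\dot B^{1+3/p}_{p,1})}$, a standard continuity (bootstrap) argument shows that $X(T)$ stays small whenever $\eta\le c_0\mu$. The exponential factor $\exp\{C_0\|u_0^3\|^2/\mu^2\}$ in \eqref{pp condition} materializes exactly when one converts the $\lambda$-weighted norms back into genuine Chemin--Lerner norms, since $e^{\lambda\theta(\infty)}\le\exp(C\|u_0^3\|_{\dot B^{-1+3/p}_{p,1}}^2/\mu^2)$ by the vertical estimate of the second step with $\lambda\sim1/\mu$. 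The uniform bound rules out finite-time blow-up, forcing $T^\ast=+\infty$, and uniqueness is inherited from Theorem~\ref{abidi theorem} (equivalently from the Lagrangian argument of Danchin--Mucha). The main obstacle throughout is the large-data term $u^3\partial_3u^h$ coupled to the density through the $(1+a)$ factor and the nonlocal pressure; the weighted-norm device is what turns this apparently supercritical interaction into something absorbable, at the cost of the exponential weight on $u_0^3$.
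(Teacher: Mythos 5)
This theorem is quoted from Paicu--Zhang \cite{PP12}; the present paper contains no proof of it, only a one-paragraph description of the method (a weighted Chemin--Lerner norm, separate energy estimates on the horizontal and vertical components, and the algebraic structure $\mathrm{div}\,u=0$), and your proposal follows exactly that scheme. Your sketch --- estimating $u^3$ first, introducing the weight $e^{-\lambda\theta(t)}$ with $\theta(t)=\int_0^t\|u^3(\tau)\|_{\dot B^{3/p}_{p,1}}^2\,d\tau$ to absorb the interaction $u^3\partial_3 u^h$, and recovering the exponential factor in \eqref{pp condition} when the weighted norms are converted back --- is essentially the argument of \cite{PP12}, so it is the same approach as the (cited) proof, up to minor slips such as the dropped $-\partial_3\Pi$ term in your rewriting of the $u^3$ equation.
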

\begin{center}
  \includegraphics[width=2.5in]{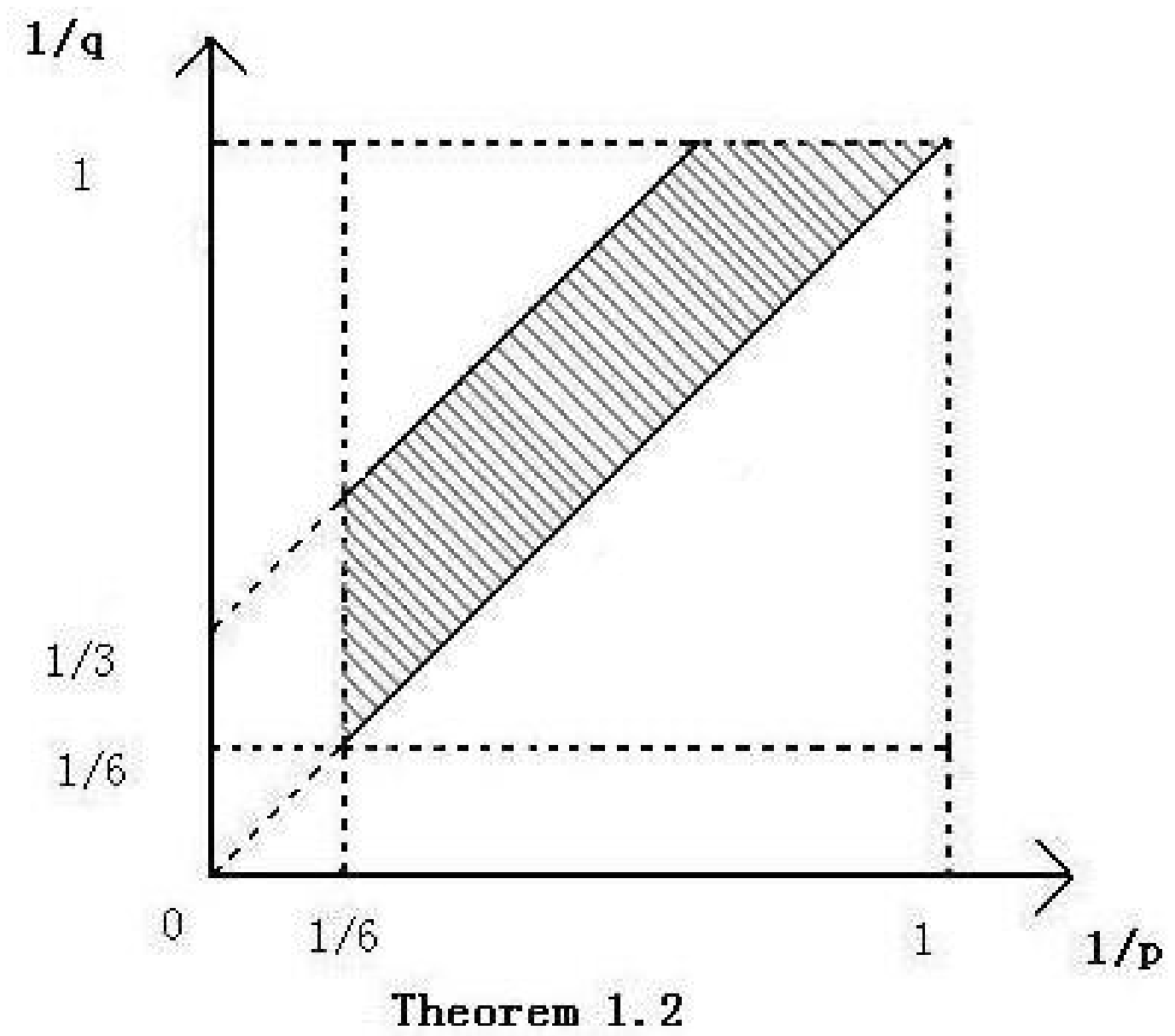}\includegraphics[width=2.5in]{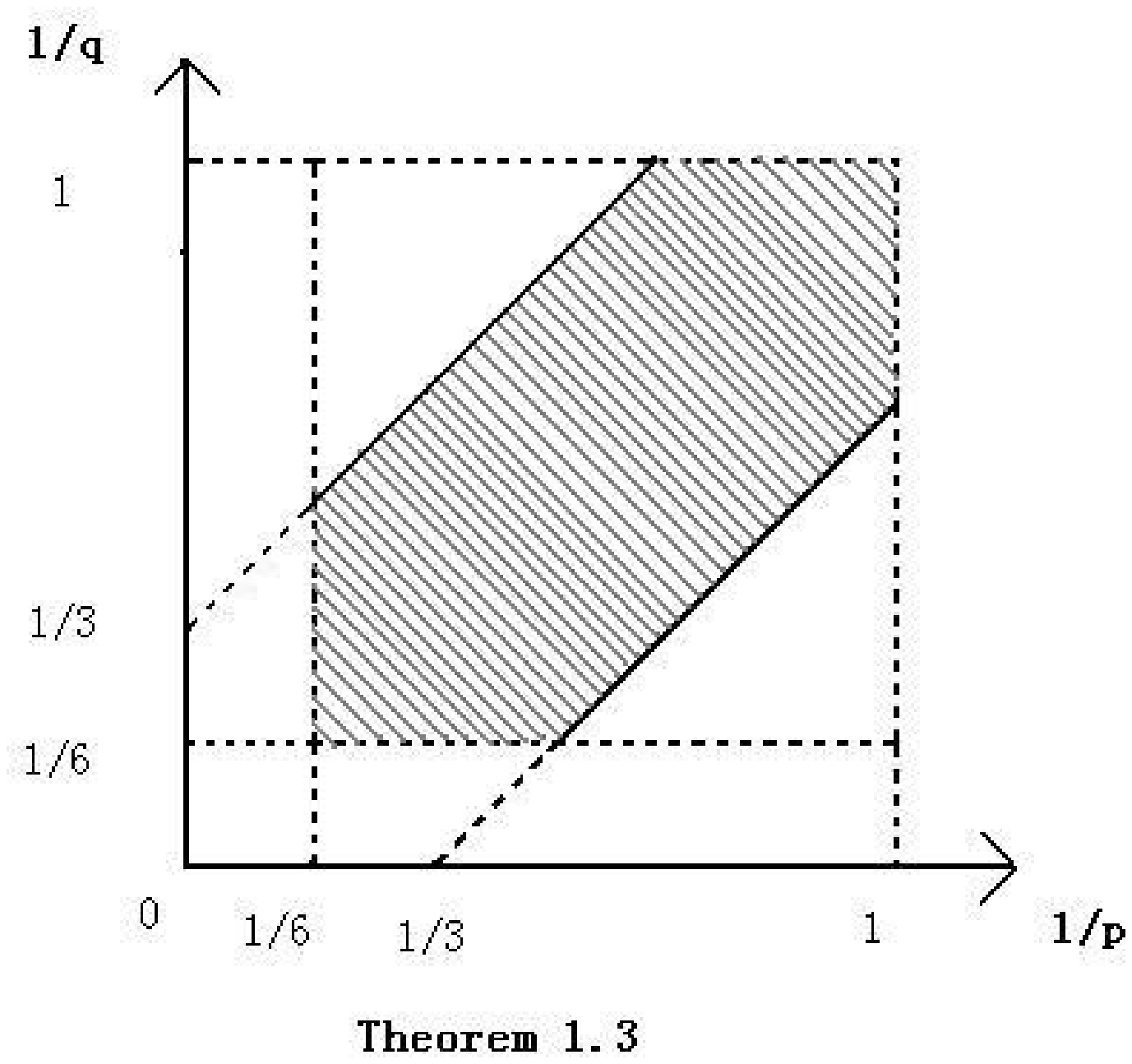}
\end{center}

 We note that, for the classical Navier-Stokes equations, the
equation on $u^3$ is a linear system for fixed $u^h=(u_1, u_2)$,
while the system on $u^h$ is nonlinear. By using the technology of a
weighted Chemin-Lerner type Besov spaces norm in \cite{PP11} or
Gronwall's inequality in \cite{GP10,ZT09}, the form of small initial
conditions liking (\ref{pp condition}) were obtained, which implies
that the third component of the initial velocity field can be large.
And by using the algebraical structure of (\ref{equation}):
$\displaystyle\mathrm{div} u=0$, they \cite{PP12} proved the Theorem
\ref{theorem zhangping} by energy estimates on the horizontal
components and the vertical component of the velocity field
respectively.

 In this paper, we are going to relax the smallness condition in
Theorems \ref{abidi theorem} and \ref{theorem zhangping}, so that (\ref{equation}) still has
a unique global solution.  Now
we present the first main result in this paper:
\begin{thm}\label{main}
Let $p,q$ satisfy $p,q \in (1,6)$ so that $ \sup(\frac{1}{p},
\frac{1}{q})\leq\frac{1}{3}+ \inf (\frac{1}{p}, \frac{1}{q})$. There
exists a positive constant $C$ such that, for any data $a_0 \in
\dot{B}_{q,1}^{\frac{3}{q}}(\mathbb{R}^3)$ and $u_0=(u_0^h,u_0^3)\in
\dot{B}_{p,1}^{-1+\frac{3}{p}}(\mathbb{R}^3)$ verifying
\begin{equation}\label{data condition0}
C\|a_0\|_{\dot{B}_{q,1}^{\frac{3}{q}}}^{\alpha}(\|u_0^3\|_{\dot{B}_{p,1}^{-1+\frac{3}{p}}}/{\mu}+1)\leq1
\end{equation}
\begin{equation}\label{data condition1}
\frac{C}{\mu}(\|u_0^h\|_{\dot{B}_{p,1}^{-1+\frac{3}{p}}}+\|
u_0^3\|_{\dot{B}_{p,1}^{-1+\frac{3}{p}}}^{1-\alpha}\|
u_0^h\|_{\dot{B}_{p,1}^{-1+\frac{3}{p}}}^{\alpha})\leq 1,
\end{equation}
whereas
\begin{equation}\label{definition alpha}
 \alpha=\left\{\begin{array}{l}
\frac{1}{p},\ \ 1<p<5\\
\varepsilon,\ \ 5\leq p<6\\
\end{array}\right.
\end{equation}
for $0<\varepsilon<\frac{6}{p}-1$, the system (\ref{equation}) has a
unique global solution $(a,u,\nabla \Pi)\in E_{p,q}$.
\end{thm}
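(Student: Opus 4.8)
The plan is to run a continuity argument on top of the local existence statement of Theorem \ref{abidi theorem}. Given the local solution $(a,u,\nabla\Pi)\in E_{p,q,T}$, let $T^{*}$ be its maximal time of existence and set
$$
X(t)=\|a\|_{\widetilde{L}^{\infty}_{t}(\dot{B}_{q,1}^{\frac{3}{q}})},\qquad
Y_{h}(t)=\|u^{h}\|_{\widetilde{L}^{\infty}_{t}(\dot{B}_{p,1}^{-1+\frac{3}{p}})}+\mu\|u^{h}\|_{L^{1}_{t}(\dot{B}_{p,1}^{1+\frac{3}{p}})},
$$
with $Y_{3}(t)$ defined analogously for $u^{3}$. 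On $[0,T^{*})$ I would posit the a priori bounds $X\le 2\|a_{0}\|_{\dot{B}_{q,1}^{\frac{3}{q}}}$, $Y_{3}\le C\|u_{0}^{3}\|_{\dot{B}_{p,1}^{-1+\frac{3}{p}}}$ and $Y_{h}\le C\|u_{0}^{h}\|_{\dot{B}_{p,1}^{-1+\frac{3}{p}}}$, and show that the smallness hypotheses \eqref{data condition0}--\eqref{data condition1} force each of these to improve strictly; by the continuation criterion for $E_{p,q,T}$ this yields $T^{*}=\infty$.

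For the density I would use the transport estimate in $\dot{B}_{q,1}^{\frac{3}{q}}$, giving $X(t)\le\|a_{0}\|_{\dot{B}_{q,1}^{\frac{3}{q}}}\exp\!\big(C\int_{0}^{t}\|\nabla u\|_{\dot{B}_{p,1}^{\frac{3}{p}}}\,d\tau\big)$; since $\int_{0}^{t}\|\nabla u\|_{\dot{B}_{p,1}^{\frac{3}{p}}}\,d\tau\lesssim(Y_{h}+Y_{3})/\mu$ stays bounded under the bootstrap, the exponential is at most $2$ and the $a$-bound closes. The decisive algebraic input is $\mathrm{div}\,u=0$, i.e.\ $\partial_{3}u^{3}=-\mathrm{div}_{h}u^{h}$, which rewrites the vertical convection as $u\cdot\nabla u^{3}=u^{h}\cdot\nabla_{h}u^{3}-u^{3}\,\mathrm{div}_{h}u^{h}$; here every nonlinear term pairs the possibly large $u^{3}$ with the small horizontal velocity. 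Treating the $u^{3}$-equation as a perturbed heat equation and applying the smoothing estimate together with the product laws in Besov spaces, and interpolating the coupling term $a(\partial_{3}\Pi-\mu\Delta u^{3})$ so that $a$ enters with the sub-linear power $\alpha$, I expect the $u^{3}$-estimate to close exactly under \eqref{data condition0}, the factor $(\|u_{0}^{3}\|/\mu+1)$ recording that a large vertical velocity is admissible provided $\|a_{0}\|^{\alpha}$ is correspondingly small.

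The genuinely nonlinear and hardest step is the horizontal estimate, where $u\cdot\nabla u^{h}=u^{h}\cdot\nabla_{h}u^{h}+u^{3}\partial_{3}u^{h}$ contains the term $u^{3}\partial_{3}u^{h}$ in which the large vertical velocity multiplies a horizontal-velocity derivative that is not itself small. The main obstacle is precisely to estimate this term \emph{without} a Gronwall step, since it was Gronwall (or the weighted Chemin--Lerner norm) that produced the exponential factor $\exp\{C_{0}\|u_{0}^{3}\|^{2}/\mu^{2}\}$ in Theorem \ref{theorem zhangping}. The resolution is to exploit $\partial_{3}u^{3}=-\mathrm{div}_{h}u^{h}$ once more: after localizing with $\Delta_{j}$, the leading part of $u^{3}\partial_{3}u^{h}$ reorganizes (up to commutator remainders) into a product in which $u^{3}$ is tied to $\mathrm{div}_{h}u^{h}$, hence to the small quantity $Y_{h}$. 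Estimating it in $L^{1}_{t}(\dot{B}_{p,1}^{-1+\frac{3}{p}})$ by a product law and then using a suitable interpolation between the endpoint regularities $\dot{B}_{p,1}^{-1+\frac{3}{p}}$ and $\dot{B}_{p,1}^{1+\frac{3}{p}}$, one arranges that the large vertical velocity enters only through the sub-linear power $1-\alpha<1$ while the small horizontal velocity enters through the power $\alpha$, producing a contribution of the form $\frac{C}{\mu}\|u_{0}^{3}\|^{1-\alpha}\|u_{0}^{h}\|^{\alpha}\,Y_{h}$; this is exactly the origin of the mixed term in \eqref{data condition1}. The admissibility of this interpolation and of the underlying product estimates forces $\alpha<\frac{6}{p}-1$, which for $1<p<5$ is met by $\alpha=\frac{1}{p}$ and for $5\le p<6$ requires the smaller choice $\alpha=\varepsilon$; this is where the restriction $p<6$ enters.

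Finally I would assemble the three estimates. Inserting the closed $X$- and $Y_{3}$-bounds into the horizontal estimate, the nonlinear right-hand side is dominated by $\frac{C}{\mu}\big(\|u_{0}^{h}\|_{\dot{B}_{p,1}^{-1+\frac{3}{p}}}+\|u_{0}^{3}\|_{\dot{B}_{p,1}^{-1+\frac{3}{p}}}^{1-\alpha}\|u_{0}^{h}\|_{\dot{B}_{p,1}^{-1+\frac{3}{p}}}^{\alpha}\big)\,Y_{h}$ plus terms quadratic in $Y_{h}$. Condition \eqref{data condition1} makes the coefficient of $Y_{h}$ at most $\tfrac12$, so this linear part is absorbed into the left-hand side while the quadratic terms are controlled by the bootstrap smallness, yielding $Y_{h}\le\tfrac{C}{2}\|u_{0}^{h}\|_{\dot{B}_{p,1}^{-1+\frac{3}{p}}}$; similarly \eqref{data condition0} strictly improves the $Y_{3}$- and $X$-bounds. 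This closes the continuity argument and gives $T^{*}=\infty$. Uniqueness of the solution in $E_{p,q}$ follows in the present range from the Lagrangian argument of Danchin and Mucha \cite{DM12}, completing the proof.
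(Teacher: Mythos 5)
Your overall architecture (a bootstrap on top of the local solution of Theorem \ref{abidi theorem}, the divergence-free identity $\partial_3u^3=-\mathrm{div}_hu^h$ used to pair the large $u^3$ with the small $u^h$, and separate horizontal/vertical energy estimates) is the same as the paper's, but two of your steps, as set up, would fail.

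First, the density estimate. You close the bound on $X(t)$ by the crude Gronwall inequality $X(t)\le\|a_0\|_{\dot{B}_{q,1}^{3/q}}\exp\big(C\int_0^t\|\nabla u\|\,d\tau\big)$ and assert the exponent stays bounded because $\int_0^t\|\nabla u\|\,d\tau\lesssim(Y_h+Y_3)/\mu$. But $Y_3/\mu\sim\|u_0^3\|_{\dot{B}_{p,1}^{-1+3/p}}/\mu$ is exactly the quantity the theorem allows to be arbitrarily large: conditions (\ref{data condition0})--(\ref{data condition1}) do not bound $\|u_0^3\|/\mu$, they only force $\|a_0\|^{\alpha}(\|u_0^3\|/\mu+1)$ and the $u_0^h$-terms to be small. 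Hence your factor is $\exp(C\|u_0^3\|/\mu)$, and it cannot be absorbed: under (\ref{data condition0}), $\|a_0\|$ is only \emph{polynomially} small in $\|u_0^3\|/\mu$, while your factor grows exponentially. This reproduces precisely the exponential-form smallness of Theorem \ref{theorem zhangping} that the paper is designed to remove. The paper's Proposition \ref{trans} avoids Gronwall entirely: in the commutator and remainder estimates for $u\cdot\nabla a$, the contribution of $\nabla u^3$ is bounded through the anisotropic inequality (\ref{u^3 2}) (vertical Gagliardo-Nirenberg plus $\partial_3u^3=-\mathrm{div}_hu^h$), so $u^3$ enters the density bound only through $\|u^3\|_{L_t^1(\dot{B}_{p,1}^{1+3/p})}^{1-1/p}\|u^h\|_{L_t^1(\dot{B}_{p,1}^{1+3/p})}^{1/p}$, i.e.\ always tempered by a power of the small horizontal component. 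You invoke this mechanism for the velocity equations but omit it where it is equally indispensable, in the transport equation.

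Second, your bootstrap ansatz $Y_3\le C\|u_0^3\|$, $Y_h\le C\|u_0^h\|$ cannot close, and your claim that the right-hand side of the horizontal estimate is dominated by a multiple of $Y_h$ plus terms quadratic in $Y_h$ is not correct. The pressure and viscous couplings produce in $A(a,u)$ the source term $\mu\|a\|_{\widetilde{L}_t^\infty(\dot{B}_{q,1}^{3/q})}\|u^3\|_{L_t^1(\dot{B}_{p,1}^{1+3/p})}\sim\|a_0\|_{\dot{B}_{q,1}^{3/q}}(\|u_0^3\|_{\dot{B}_{p,1}^{-1+3/p}}+\mu)$, which is independent of $u^h$: if $u_0^h=0$ but $a_0,u_0^3\neq0$, horizontal velocity is created by the coupling, so no bound proportional to $\|u_0^h\|$ can hold (and symmetrically for $Y_3$ when $u_0^3=0$). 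This is why the paper's bootstrap (\ref{continuity assumption}) takes the form $4\|u_0^3\|+\mu$ and $4\|u_0^h\|+\eta\mu$ with the choice $\eta=\frac{8C}{\bar c}\|a_0\|_{\dot{B}_{q,1}^{3/q}}(\|u_0^3\|_{\dot{B}_{p,1}^{-1+3/p}}/\mu+1)$, and it is exactly here that (\ref{data condition0}) acquires its precise shape: it is what makes $\eta$ small and what makes $(\eta\mu)^{\alpha}(\|u_0^3\|+\mu)^{1-\alpha}\lesssim\mu$ in the closing conditions. In particular the power $\alpha$ on $\|a_0\|$, which you attribute to an interpolation of the coupling term $a(\partial_3\Pi-\mu\Delta u^3)$, actually arises at this assembly stage (in the paper, $a$ enters all estimates linearly). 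Repairing your argument requires importing both devices, after which it coincides with the paper's proof; note also that the paper needs a separate argument (Section \ref{p>q}) to cover $p>q$, which your outline does not address.
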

\begin{rem}
Motivated by \cite{ZT14}, using Gagliardo-Nirenberg
inequality, we obtain $\|u^3\|_{L_v^\infty}\leq
C\|u^3\|_{L_v^p}^{1-\frac{1}{p}}\|\displaystyle\mathrm{div}_h
u^h\|_{L_v^p}^{\frac{1}{p}}$, and one can search for details in Lemma \ref{u^3 (1)} in
the second section. This implies that the velocity is large in one
direction with the $L^p$ framework functional space, but is small in
the $L^{\infty}$ framework functional space.
\end{rem}
\begin{rem}
We assert that our theorem remains to be true in the case when the
viscosity coefficient depends on the density by a regular function
$\mu(\rho)$ with $\mu(\rho)\geq \mu>0$. In this case, we just need a
small modification of the proof to Theorem \ref{main} by using the
fact that: for any positive $s$, we have
$\|\widetilde{\mu}(a)-\widetilde{\mu}(0)\|_{\dot{B}_{q,1}^s}\leq
C(1+\|a\|_{L^{\infty}})^{[s]+1}\|a\|_{\dot{B}_{q,1}^s}$, where
$\widetilde{\mu}(a)\stackrel{def}{=}\mu(\frac{1}{1+a})$.
\end{rem}
\begin{rem}
We can also have a version of Theorem \ref{main} in any space
dimension. Just for a clear presentation, we choose to work in the
three space dimension case here.
\end{rem}
\begin{rem}
About ill-posedness of the classical impressible Navier-Stokes
equations with $\rho=1$, Bourgain-Pavlovie \cite{BP} and Germain
\cite{G} proved the ill-posedness in the largest critical space
$\dot{B}_{\infty,\infty}^{-1}$. Motivated by \cite{BP}, Chen, Miao, Zhang
\cite{CMZ13} proved that the 3-D baratropic Navier-Stokes equations
is ill-posed for the initial density and velocity belonging to the
critical Besov spaces
$(\dot{B}_{p,1}^{\frac{3}{p}}+\bar{\rho},\dot{B}_{p,1}^{\frac{3}{p}-1})$ for
$p>6$, here $\bar{\rho}$ is a positive constant. In the future, we
will work in ill-posedness of the incompressible inhomogeneous
Navier-Stokes system in critical Besov spaces.
\end{rem}

 Very recently,
Danchin and Mucha \cite{DM12} also obtained a more general result by
considering very rough densities in some multiplier spaces on the
Besov spaces $\dot{B}_{p,1}^{-1+\frac{3}{p}}(\mathbb{R}^3)$. In particuar,
they are able to consider the physical case of mixture of fluids
with piecewise constant density. We emphasize that the main feature
of the density used in this theorem is to be a multiplier on the
velocity space. This allows to define the nonlinear terms containing
products between the density and the velocity in the system
(\ref{equation}). And Motivated by \cite{DM12,HPP12}, we can also
replace the $\|a_0\|_{\dot{B}_{q,1}^{\frac{3}{q}}}$ in the smallness
condition (\ref{data condition0}) by
$\|a_0\|_{\mathscr{M}(\dot{B}_{p,1}^{-1+\frac{3}{p}})}$ and prove a
similar version of Theorem \ref{main}:
\begin{thm}\label{main theorem}
Let $\frac{3}{2}<p<6$. Let $a_0\in
\mathscr{M}(\dot{B}_{p,1}^{-1+\frac{3}{p}}(\mathbb{R}^3))$ and
$u_0=(u_0^h,u_0^3)\in \dot{B}_{p,1}^{-1+\frac{3}{p}}(\mathbb{R}^3)$. Then
there exists a positive constant $C$ such that if
\begin{equation*}
C\|a_0\|_{\mathscr{M}(\dot{B}_{p,1}^{-1+\frac{3}{p}})}^{\alpha}(\|u_0^3\|_{\dot{B}_{p,1}^{-1+\frac{3}{p}}}/{\mu}+1)\leq1
\end{equation*}
\begin{equation}\label{data condition2}
\frac{C}{\mu}(\|u_0^h\|_{\dot{B}_{p,1}^{-1+\frac{3}{p}}}+\|
u_0^3\|_{\dot{B}_{p,1}^{-1+\frac{3}{p}}}^{1-\alpha}\|
u_0^h\|_{\dot{B}_{p,1}^{-1+\frac{3}{p}}}^{\alpha})\leq 1,
\end{equation}
whereas $\alpha$ defined as (\ref{definition alpha}), then
(\ref{equation}) has a unique global solution $a\in
\widetilde{\mathcal {C}}^{\infty}(\mathbb{R}^+;
\mathscr{M}(\dot{B}_{p,1}^{-1+\frac{3}{p}})(\mathbb{R}^3))$ and
$u\in\widetilde{\mathcal
{C}}([0,\infty);\dot{B}_{p,1}^{-1+\frac{3}{p}}(\mathbb{R}^3))\cap
L^1(\mathbb{R}^+;\dot{B}_{p,1}^{1+\frac{3}{p}}(\mathbb{R}^3))$.
\end{thm}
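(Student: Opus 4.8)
The plan is to reprove Theorem~\ref{main theorem} by running the argument behind Theorem~\ref{main}, the only structural change being that wherever that proof invokes the Besov product law
\[
\|af\|_{\dot{B}_{p,1}^{-1+\frac{3}{p}}}\le C\|a\|_{\dot{B}_{q,1}^{\frac{3}{q}}}\|f\|_{\dot{B}_{p,1}^{-1+\frac{3}{p}}},
\]
I instead use the defining inequality of the multiplier space,
\[
\|af\|_{\dot{B}_{p,1}^{-1+\frac{3}{p}}}\le \|a\|_{\mathscr{M}(\dot{B}_{p,1}^{-1+\frac{3}{p}})}\|f\|_{\dot{B}_{p,1}^{-1+\frac{3}{p}}}.
\]
Because the admissible range of Theorem~\ref{main} gives the embedding $\dot{B}_{q,1}^{\frac{3}{q}}\hookrightarrow\mathscr{M}(\dot{B}_{p,1}^{-1+\frac{3}{p}})$, replacing the density norm by the multiplier norm is a genuine generalization, and the velocity part of the analysis transfers almost verbatim. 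First I would build approximate solutions (by Friedrichs' regularization, or a Banach iteration seeded by the free heat flow, with the local theory of \cite{DM12} in the multiplier class), derive uniform-in-$T$ bounds on $[0,T]$, and then pass to the limit and let $T\to\infty$.

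For the velocity I would rewrite the momentum equation as $\partial_t u+u\cdot\nabla u-\mu\Delta u+\nabla\Pi=-a(\nabla\Pi-\mu\Delta u)$ and estimate $u^h$ and $u^3$ separately in $\widetilde{\mathcal{C}}(\dot{B}_{p,1}^{-1+\frac{3}{p}})\cap L^1(\dot{B}_{p,1}^{1+\frac{3}{p}})$, exactly as in Theorem~\ref{main}. The horizontal part stays small through hypothesis~(\ref{data condition2}), while the vertical component $u^3$ is allowed to be large: its contribution to the nonlinearity is tamed by $\mathrm{div}\,u=0$, i.e. $\partial_3 u^3=-\mathrm{div}_h u^h$, together with the anisotropic Gagliardo--Nirenberg bound $\|u^3\|_{L_v^\infty}\le C\|u^3\|_{L_v^p}^{1-\frac{1}{p}}\|\mathrm{div}_h u^h\|_{L_v^p}^{\frac{1}{p}}$ of Lemma~\ref{u^3 (1)}. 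The forcing terms $a\nabla\Pi$ and $a\mu\Delta u$ are now controlled in $\dot{B}_{p,1}^{-1+\frac{3}{p}}$ purely through $\|a\|_{\mathscr{M}(\dot{B}_{p,1}^{-1+\frac{3}{p}})}$, and the pressure is recovered from the elliptic relation $\mathrm{div}((1+a)\nabla\Pi)=\mathrm{div}(\mu a\Delta u-u\cdot\nabla u)$, whose solvability again requires only $\|a\|_{\mathscr{M}}$ small. Collecting these estimates reproduces the polynomial smallness conditions in the statement.

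The main obstacle is the density estimate. For the transport equation $\partial_t a+u\cdot\nabla a=0$ the space $\mathscr{M}(\dot{B}_{p,1}^{-1+\frac{3}{p}})$ carries no convenient Littlewood--Paley description, so the commutator-based transport estimates available for $\dot{B}_{q,1}^{\frac{3}{q}}$ are lost. Following \cite{DM12}, I would pass to Lagrangian coordinates: if $X_t$ is the flow of the divergence-free field $u$, then $a(t)=a_0\circ X_t^{-1}$, so $a\circ X_t\equiv a_0$ is frozen in time, and the whole difficulty reduces to showing that composition with the volume-preserving near-identity map $X_t$ preserves the multiplier norm up to a controlled factor, $\|a_0\circ X_t^{\pm1}\|_{\mathscr{M}(\dot{B}_{p,1}^{-1+\frac{3}{p}})}\le C\|a_0\|_{\mathscr{M}(\dot{B}_{p,1}^{-1+\frac{3}{p}})}$. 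This holds once $X_t-\mathrm{Id}$, controlled by $\int_0^t\|\nabla u\|_{\dot{B}_{p,1}^{\frac{3}{p}}}\,ds$, is small, which is exactly where the smallness of $u^h$ feeds back. Since the velocity bound needs $\|a\|_{\mathscr{M}}$ small and the density bound needs the velocity integral small, I would close the two simultaneously by a continuity/bootstrap argument on the maximal interval $[0,T^*)$ and prove $T^*=\infty$; the factor $\|a_0\|_{\mathscr{M}}^{\alpha}$ in the first smallness condition is precisely what compensates the possibly large growth coming from the large vertical component $u^3$.

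Finally, for uniqueness with $\frac{3}{2}<p<6$, I would again use the Lagrangian formulation of \cite{DM12}: there the density is frozen to $a_0$, and the difference of two solutions solves a Stokes-type system whose right-hand side is quadratic in the velocities and linear in $a_0$; an energy estimate in $\dot{B}_{p,1}^{-1+\frac{3}{p}}$, using the multiplier property of $a_0$ to absorb the density-dependent terms, yields uniqueness. The lower bound $p>\frac{3}{2}$ is what guarantees the validity of the Eulerian--Lagrangian change of variables and of the product and multiplier estimates at this regularity.
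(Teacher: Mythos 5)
Your overall architecture is in fact the same as the paper's: you keep the momentum equation in Eulerian form and rerun the separate $u^h$/$u^3$ estimates of Theorem \ref{main} with the multiplier inequality replacing Lemma \ref{product law 2}, and you handle the density through the flow-map representation $a(t)=a_0\circ X_u^{-1}(t)$ rather than through Littlewood--Paley commutator estimates (the paper, too, explicitly declines the full Lagrangian reformulation of \cite{DM12} and uses the flow only for the transport equation). The genuine gap is in how you control the composition. You claim the multiplier norm is preserved ``once $X_t-\mathrm{Id}$, controlled by $\int_0^t\|\nabla u\|_{\dot{B}_{p,1}^{3/p}}\,ds$, is small, which is exactly where the smallness of $u^h$ feeds back.'' But that integral contains $\|u^3\|_{L_t^1(\dot{B}_{p,1}^{1+3/p})}$, which in the regime of Theorem \ref{main theorem} is large, of size $\|u_0^3\|_{\dot{B}_{p,1}^{-1+3/p}}/\mu$; the smallness of $u^h$ does not make it small, and $X_t-\mathrm{Id}$ is genuinely far from zero. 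If you bound the composition constant by $\exp\{C\int_0^t\|\nabla u\|\,ds\}$ with this full norm, the density estimate carries a factor $\exp\{C\|u_0^3\|_{\dot{B}_{p,1}^{-1+3/p}}/\mu\}$, and the bootstrap then closes only under an exponential smallness condition of the type (\ref{pp condition}) --- which is exactly the condition the theorem is designed to replace by a polynomial one. So the step, as written, fails to deliver the claimed conclusion.

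The repair is the same anisotropic mechanism you correctly invoke for the velocity but omit for the density. In Lemma \ref{flow estimate Lemma} the relevant quantity is $\int_0^t\|\nabla u\|_{L^{\infty}}d\tau$, and the paper splits it: the horizontal part integrates to $\lesssim\|u^h\|_{L_t^1(\dot{B}_{p,1}^{1+3/p})}$ (small), while $\int_0^t\|\nabla u^3\|_{L^{\infty}}d\tau$ is estimated, via $\partial_3u^3=-\mathrm{div}_hu^h$ and (\ref{u^3 2}) with $m=r=\infty$, by
$\|u^3\|_{L_t^1(\dot{B}_{p,1}^{1+3/p})}^{1-\frac{1}{p}}\|u^h\|_{L_t^1(\dot{B}_{p,1}^{1+3/p})}^{\frac{1}{p}}$,
a product of a large factor to a fractional power with a small one, which remains bounded under (\ref{data condition2}). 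This is what produces the exponent in (\ref{transport equation eatimate}) and keeps the constant in the density bound uniform in time. Separately, the composition-invariance of the multiplier norm, which you only assert, does require an argument; the paper obtains it in two lines from Definition \ref{multiplier space definition} by writing $\Psi a(t)=\bigl((\Psi\circ X_u)a_0\bigr)\circ X_u^{-1}$ and applying the Besov composition estimate of Lemma \ref{flow estimate Lemma} twice, once to the outer composition and once to $\Psi\circ X_u$; this is also precisely where the restriction $-1+\frac{3}{p}\in(-1,1)$, i.e. $p>\frac{3}{2}$, is used. With these two corrections your argument coincides with the paper's proof.
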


\begin{rem}
Using  basic continuity results for the paraproduct operator (see
  \cite{RD11}), one can obtain that any space $L^{\infty}(\mathbb{R}^3)\cap
  \dot{B}_{q,\infty}^{\frac{3}{q}}(\mathbb{R}^3)$ with $q$ satisfying
  \begin{equation}\label{pq condition}
  \frac{1}{q}+\frac{1}{p}>\frac{1}{3}\ \ and \ \
  \frac{1}{p}-\frac{1}{q}<\frac{1}{3}
  \end{equation}
  embeds $\mathscr{M}(\dot{B}_{p,1}^{-1+\frac{3}{p}}(\mathbb{R}^3))$. It contains
  characteristic functions of $C^1$-bounded domains whenever $p>2$
  (see the proof of Lemma A.7 in \cite{DM12}).  Hence our result
  applies to a mixture of fluids, which is of great physical
  interest.
\end{rem}
\textbf{Scheme of the proof and organization of the paper.} In the
second section, we shall collect some basic facts on
Littlewood-Paley analysis. In the third section , we prove theorem
\ref{main} in case when $p\leq q$. Finally in the last section, we
shall prove the case of $p>q$. And in the Appendix, we give the
proof of Theorem \ref{main theorem}.

 Let us complete this section with the notations we are going to
this context.\\
\textbf{Notations.} Let $A, B$ be two operators, we denote
$[A;B]=AB-BA$, the commutator between $A$ and $B$. For $a\lesssim
b$, we mean that there is a uniform constant $C$, which may be
different on different lines, such that $a\leq Cb$. We shall denoted
by $(a|b)$ the $L^2(\mathbb{R}^3)$ inner product of $a$ and
 $b$. $(d_j)_{j\in\mathbb{Z}}$ will be a generic element of
$\ell^1(\mathbb{Z})$ so that $d_j\geqslant0$ and
$\Sigma_{j\in\mathbb{Z}}d_j=1.$

 For $X$ a Banach space and $I$ an interval of $\mathbb{R}$, we
denote by $\mathcal {C}(I,X)$ the set of continuous functions on $I$
with values in $X$, and by $L^p(I;X)$ stands for the set of
measurable functions on $I$ with values in $X$, such that
$t\longmapsto\|f(t)\|_X$ belongs to $L^p(I)$. Finally we denote
$L_T^p(L_h^q(L_v^r))$ the space
$L^p([0,T];L^q(\mathbb{R}_{x_1}\times\mathbb{R}_{x_2};L^r(\mathbb{R}_{x_3})))$.
\section{Preliminaries}
The proof of Theorem \ref{main} requires the Littlewood-Paley
decomposition. Let us briefly explain how it may be built in the
case $x\in\mathbb{R}^3$ (see e.g.\cite{RD11}). Let $\varphi$ be a
smooth function supported in the ring $\mathcal
{C}\stackrel{def}{=}\{\xi\in\mathbb{R}^3,\frac{3}{4}\leq|\xi|\leq\frac{8}{3}\}$
and such that\\
$$
\sum\limits_{j\in\mathbb{Z}}\varphi(2^{-j}\xi)=1  \ \  \textrm{for} \ \
|\xi|\neq0.
$$
Then for $u \in \mathcal {S}'(\mathbb{R}^3)$, we set\\
$$
\forall j \in\mathbb{Z},\ \
\Delta_ju\stackrel{def}{=}\varphi(2^{-j}D)u\ \ \textrm{and}\ \
S_ju\stackrel{def}{=}\sum\limits_{\ell\leq j-1}\Delta_\ell u.
$$

 The Besov space can be characterized in virtue of the
Littlewood-Paley decomposition. Let $\mathcal {S}'_h(\mathbb{R}^3)$
be the space of tempered distributions $u$ such that
$$
\lim\limits_{\lambda\rightarrow\infty}\|\theta(\lambda
D)u\|_{L^{\infty}}=0\ \ \textrm{for  any} \ \ \theta\in\mathcal
{D}(\mathbb{R}^3),
$$
where $\mathcal {D}(\mathbb{R}^3)$ is the space of smooth compactly
supported functions on $\mathbb{R}^3$. Moreover, the
Littlewood-Paley decomposition satisfies the property of almost
orthogonality:
$$
\Delta_k\Delta_ju\equiv0\ \ \textrm{if}\ \ |k-j|\geq2\ \ \textrm{and} \ \
\Delta_k(S_{j-1}u\Delta_ju)\equiv0\ \ if\ \ |k-j|\geq5.
$$
We recall now the definitions of homogeneous Besov spaces and Chemin-Lerner-type spaces
$\widetilde{L}_T^{\lambda}(\dot{B}_{p,r}^s(\mathbb{R}^3))$ from
\cite{RD11}.
\begin{defn} \label{besov definition}
Let $(p,r)\in[1,+\infty]^2, s\in \mathbb{R}$ and $u\in \mathcal
{S}'_h(\mathbb{R}^3)$, we set
$$
\|u\|_{\dot{B}_{p,r}^s}\stackrel{def}{=}\{2^{qs}\|\Delta_q u\|_{L
^p}\}_{\ell^r}.
$$
$\bullet$ For $s<\frac{3}{p}$ (or $s=\frac{3}{p}$ if $r=1$), we
define $\dot{B}_{p,r}^s(\mathbb{R}^3)\stackrel{def}{=}\{u\in\mathcal
{S}'_h(\mathbb{R}^3)|\ \ \|u\|_{\dot{B}_{p,r}^s}<\infty\}$.\\
$\bullet$ If $k\in\mathbb{N}$ and $\frac{3}{p}+k\leq
s<\frac{3}{p}+k+1$ (or $s=\frac{3}{p}+k+1$ if $r=1$), then
$\dot{B}_{p,r}^s(\mathbb{R}^3)$ is defines as the subset of distributions
$u\in\mathcal {S}'_h(\mathbb{R}^3)$ such that $\partial^{\beta}u\in
\dot{B}_{p,r}^{s-k}(\mathbb{R}^3)$ whenever $|\beta|=k$.
\end{defn}

\begin{defn}
Let $s\in\mathbb{R}$, $(r, \lambda, p)\in[1, +\infty]^3$ and
$T\in(0, +\infty]$. We define
$\widetilde{L}_T^{\lambda}(\dot{B}_{p,r}^s(\mathbb{R}^3))$ as the
completion of $C([0,T];\mathcal {S}(\mathbb{R}^3))$ by the norm
$$
\|f\|_{\widetilde{L}_T^{\lambda}(\dot{B}_{p,r}^s)}\stackrel{def}{=}(\sum\limits_{q\in\mathbb{Z}}2^{qrs}(\int_0^T\|\Delta_qf(t)\|_{L^p}^{\lambda}dt)^{\frac{r}{\lambda}})^{\frac{1}{r}}<\infty,
$$
with the usual change if $r=\infty$. For short, we just denote this
space by $\widetilde{L}_T^{\lambda}(\dot{B}_{p,r}^s)$.
\end{defn}

 As we shall frequently use the anisotropic Bernstein inequalities. For
the convenience of the reader, we recall the following Bernstein
type lemma from \cite{RD11,Paicu05}.
\begin{lem}\label{bernstein}
Let $\mathcal {C}$ be a ring of $\mathbb{R}^3$ and $N\in
\mathbb{N}$. There exists a constant $C$ such that for any
homogeneous function $\sigma$ of degree $m$ smooth outside of $0$
and all $1\leq a\leq b\leq\infty$, we have \\
If the support of $\hat{u}$ is included in $2^k\mathcal {C}$, then
$$
C^{-1-N}2^{kN}\|u\|_{L^a}\leq\sup_{|\alpha|=N}\|\partial^{\alpha}u\|_{L^a}
\leq C^{1+N}2^{kN}\|u\|_{L^a}.
$$
If the support of $\hat{u}$ is included in $2^k\mathcal {C}$, then
$$\|\sigma(D)u\|_{L^b}\leq
C_{\sigma,m}2^{km+3k(\frac{1}{a}-\frac{1}{b})}\|u\|_{L^a}.
$$
Furthermore, let $\mathcal {B}_h$ (resp. $\mathcal {B}_v$) a ball of
$\mathbb{R}_h^2$ (resp. $\mathbb{R}_v$), let $1\leq p_2\leq
p_1\leq\infty$, $1\leq q_2\leq q_1\leq\infty$, we have\\
If the support of $\hat{u}$ is included in $2^j\mathcal {B}_h$, then
$$
\|\partial_{x_h}^{\alpha}
u\|_{L_h^{p_1}(L_v^{q_1})}\lesssim2^{j(|\alpha|+2(\frac{1}{p_2}-\frac{1}{p_1}))}\|u\|_{L_h^{p_2}(L_v^{q_1})}.
$$
If the support of $\hat{u}$ is included in $2^{\ell}\mathcal {B}_v$,
then
$$
\|\partial_{x_3}^{\beta}
u\|_{L_h^{p_1}(L_v^{q_1})}\lesssim2^{\ell(\beta+(\frac{1}{q_2}-\frac{1}{q_1}))}\|u\|_{L_h^{p_1}(L_v^{q_2})}.
$$
\end{lem}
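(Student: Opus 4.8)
The plan is to reduce each inequality to Young's inequality for convolutions, after rewriting $u$ or its derivatives as a Fourier multiplier acting on $u$ whose symbol is smooth and compactly supported, and then to read off the powers of $2$ from dyadic scaling. Throughout, I would fix $\tilde\varphi\in C_c^\infty(\mathbb{R}^3\setminus\{0\})$ with $\tilde\varphi\equiv1$ on $\mathcal C$ and supported in a slightly larger ring; since $\widehat u$ is supported in $2^k\mathcal C$ we have $u=\tilde\varphi(2^{-k}D)u$. The basic scaling fact I will use repeatedly is that if $K=\mathcal{F}^{-1}m$ for a smooth compactly supported $m$, then $m(2^{-k}D)$ has kernel $2^{3k}K(2^k\cdot)$, whose $L^r$ norm equals $2^{3k(1-1/r)}\|K\|_{L^r}$.

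For the upper bound in the first inequality and for the multiplier estimate I would argue directly. Writing $\partial^\alpha\tilde\varphi(2^{-k}\xi)=2^{k|\alpha|}\,g_\alpha(2^{-k}\xi)$ with $g_\alpha(\eta)=(i\eta)^\alpha\tilde\varphi(\eta)$ smooth and compactly supported, Young's inequality with the ($L^1$) kernel yields $\|\partial^\alpha u\|_{L^a}\le 2^{k|\alpha|}\|\mathcal{F}^{-1}g_\alpha\|_{L^1}\|u\|_{L^a}$, which is the right half with $|\alpha|=N$. For the second inequality I would use homogeneity, $\sigma(\xi)=2^{km}\sigma(2^{-k}\xi)$, so that $\sigma(D)u=2^{km}(\sigma\tilde\varphi)(2^{-k}D)u$; choosing $r$ by $1/r=1+1/b-1/a$ and applying Young's inequality together with the scaling fact above (whose exponent is $3k(1-1/r)=3k(1/a-1/b)$) produces the bound with $C_{\sigma,m}=\|\mathcal{F}^{-1}(\sigma\tilde\varphi)\|_{L^r}$.

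The left half of the first inequality is the only genuinely non-routine point, because of the demanded $C^{1+N}$ dependence. Here I would invert the Laplacian on the support: on $\mathrm{supp}\,\tilde\varphi$ one has $1=\sum_{|\alpha|=N}\binom{N}{\alpha}\xi^{2\alpha}/|\xi|^{2N}$, and peeling off one factor $\xi^\alpha$ onto $\widehat u$ to form $\widehat{\partial^\alpha u}$ gives, after the $2^{-k}$ rescaling,
\[
u=2^{-kN}\sum_{|\alpha|=N}c_\alpha\,h_\alpha(2^{-k}D)\,\partial^\alpha u,\qquad h_\alpha(\eta)=\frac{\eta^\alpha\tilde\varphi(\eta)}{|\eta|^{2N}},
\]
with $|c_\alpha|\le\binom{N}{\alpha}$. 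Young's inequality then bounds $\|u\|_{L^a}$ by $2^{-kN}\sum_{|\alpha|=N}\binom{N}{\alpha}\|\mathcal{F}^{-1}h_\alpha\|_{L^1}\,\|\partial^\alpha u\|_{L^a}$. The obstacle is precisely controlling $\sum_{|\alpha|=N}\binom{N}{\alpha}\|\mathcal{F}^{-1}h_\alpha\|_{L^1}$ by $C^{1+N}$: since $\sum_{|\alpha|=N}\binom{N}{\alpha}=3^N$, it suffices to bound each $\|\mathcal{F}^{-1}h_\alpha\|_{L^1}$ geometrically in $N$, which I would obtain by integrating by parts finitely many times and estimating the derivatives of $h_\alpha$ on its fixed compact support, away from the origin.

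Finally, the two anisotropic inequalities follow the same template carried out in the horizontal (resp. vertical) variables only. I would fix a cutoff $\chi_h$ supported in a slightly larger horizontal ball and equal to $1$ on $\mathcal B_h$, write $u=\chi_h(2^{-j}D_h)u$, transfer the derivative $\partial_{x_h}^\alpha$ onto the symbol to gain $2^{j|\alpha|}$, and apply Young's inequality in the horizontal variables while carrying the $L_v^{q_1}$ norm through by Minkowski's inequality for mixed norms; since the horizontal space is two-dimensional, the scaling exponent becomes $2(1/p_2-1/p_1)$, yielding the stated bound. The vertical inequality is identical with $\mathcal B_v$, $D_3$, and a one-dimensional scaling, which produces the coefficient $1$ in front of $(1/q_2-1/q_1)$.
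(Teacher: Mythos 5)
Your proposal is correct, but note the paper contains no proof of this lemma: it is recalled verbatim from \cite{RD11,Paicu05}, and your argument (Young's inequality after a dyadic-scale cutoff, the multinomial identity $1=\sum_{|\alpha|=N}\binom{N}{\alpha}\xi^{2\alpha}/|\xi|^{2N}$ on the ring for the reverse inequality with geometric control of $\|\mathcal{F}^{-1}h_\alpha\|_{L^1}$, and the horizontal/vertical scaling with Minkowski for the mixed-norm anisotropic bounds) is essentially the standard proof given in those references.
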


 In the sequel, we shall frequently use Bony's decomposition
from \cite{RD11} in the homogeneous context:
\begin{equation}\label{bony}
  uv=T_uv+T_vu+R(u,v) \ \ or\ \ uv=T_uv+\mathcal {R}(u,v),
\end{equation}
where
\begin{eqnarray*}
  T_uv\stackrel{def}{=}\sum\limits_{j\in\mathbb{Z}}S_{j-1}u\Delta_jv,&\mathcal {R}(u,v)\stackrel{def}{=}\sum\limits_{j\in\mathbb{Z}}\Delta_juS_{j+2}v,\\
R(u,v)\stackrel{def}{=}\sum\limits_{j\in\mathbb{Z}}\Delta_ju\widetilde{\Delta}_jv,
& \textrm{and}\ \
\widetilde{\Delta}_jv\stackrel{def}{=}\sum\limits_{|j'-j|\leq1}\Delta_{j'}v.
\end{eqnarray*}
Finally, for the sake of completeness, we recall the following
product laws from Lemma $2.2$ in \cite{PP12}:
\begin{lem}\label{product law}
Let $p_2\geq p_1\geq1$,
$s_1\leq\frac{3}{p_1},s_2\leq\frac{3}{p_2}$ with
$s_1+s_2>3\max(0,\frac{1}{p_1}+\frac{1}{p_2}-1)$, $a\in
\dot{B}_{p_1,1}^{s_1}(\mathbb{R}^3),b\in \dot{B}_{p_2,1}^{s_2}(\mathbb{R}^3)$.
Then $ab\in \dot{B}_{p_2,1}^{s_1+s_2-\frac{3}{p_1}}(\mathbb{R}^3)$, and
there holds
\begin{equation*}
\|ab\|_{\dot{B}_{p_2,1}^{s_1+s_2-\frac{3}{p_1}}}\lesssim\|a\|_{\dot{B}_{p_1,1}^{s_1}}\|b\|_{\dot{B}_{p_2,1}^{s_2}}.
\end{equation*}
\end{lem}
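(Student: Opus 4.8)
The plan is to prove the estimate by Bony's decomposition \eqref{bony}, writing $ab = T_a b + T_b a + R(a,b)$ and bounding each piece separately in $\dot{B}_{p_2,1}^{s_1+s_2-\frac{3}{p_1}}$. The guiding principle is that the target index $s:=s_1+s_2-\frac{3}{p_1}$ records exactly one loss of $\frac{3}{p_1}$ derivatives, which is the cost of measuring the factor $a$ in $L^{p_1}$ while demanding the product in the coarser space $L^{p_2}$ (recall $p_2\ge p_1$); this loss will surface through the Bernstein inequalities of Lemma \ref{bernstein}. Throughout I would normalize $2^{js_1}\|\Delta_j a\|_{L^{p_1}}=c_j\|a\|_{\dot{B}_{p_1,1}^{s_1}}$ and $2^{js_2}\|\Delta_j b\|_{L^{p_2}}=d_j\|b\|_{\dot{B}_{p_2,1}^{s_2}}$ with $(c_j),(d_j)\in\ell^1$, so that every estimate reduces to controlling a convolution of these $\ell^1$-sequences. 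I would also note that the target index satisfies $s_1+s_2-\frac{3}{p_1}\le\frac{3}{p_2}$ by the hypotheses $s_i\le\frac{3}{p_i}$, so $\dot{B}_{p_2,1}^s$ is a genuine realization in $\mathcal{S}'_h$ and the product is well defined.

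For the paraproduct $T_a b=\sum_j S_{j-1}a\,\Delta_j b$, each summand has Fourier support in a dyadic annulus $\sim 2^j$, so only $|j-k|\le 4$ contribute to $\Delta_k(T_a b)$. I would bound the low-frequency factor in $L^\infty$ by Bernstein, $\|S_{j-1}a\|_{L^\infty}\lesssim\sum_{\ell\le j-2}2^{3\ell/p_1}\|\Delta_\ell a\|_{L^{p_1}}$, and since $s_1\le\frac{3}{p_1}$ makes the exponent $\frac{3}{p_1}-s_1\ge0$, this sum is controlled by $2^{j(\frac{3}{p_1}-s_1)}$ times a summable sequence (by $\ell^1\ast\ell^1\subset\ell^1$ when $s_1<\frac{3}{p_1}$, and directly when $s_1=\frac{3}{p_1}$, where the index $r=1$ is essential). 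Multiplying by $\|\Delta_j b\|_{L^{p_2}}$, the powers of $2^j$ collect to $2^{j(\frac{3}{p_1}-s_1-s_2)}=2^{-js}$, so $2^{ks}\|\Delta_k(T_a b)\|_{L^{p_2}}$ is summable in $k$. The symmetric term $T_b a=\sum_j S_{j-1}b\,\Delta_j a$ is handled the same way, except that $\Delta_j a$ now lives in $L^{p_1}$ and must be promoted to $L^{p_2}$: here $p_2\ge p_1$ permits $\|\Delta_j a\|_{L^{p_2}}\lesssim 2^{3j(\frac{1}{p_1}-\frac{1}{p_2})}\|\Delta_j a\|_{L^{p_1}}$, and this is precisely where the $\frac{3}{p_1}$ loss is produced. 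Bounding $\|S_{j-1}b\|_{L^\infty}$ via $s_2\le\frac{3}{p_2}$, the powers of $2^j$ again combine to $2^{-js}$.

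The remainder $R(a,b)=\sum_j\Delta_j a\,\widetilde{\Delta}_j b$ is the delicate term and the main obstacle. Now each summand has Fourier support only in a ball $\sim 2^j$, so $\Delta_k R(a,b)$ picks up all $j\ge k-N_0$, and the $k$-sum converges only if the surviving exponent of $2^k$ is positive; this is the source of the threshold $s_1+s_2>3\max(0,\frac{1}{p_1}+\frac{1}{p_2}-1)$. I would split into two cases according to the Hölder exponent $p_3$ with $\frac{1}{p_3}=\frac{1}{p_1}+\frac{1}{p_2}$. If $\frac{1}{p_1}+\frac{1}{p_2}\le1$, then $p_3\ge1$ and I estimate $\|\Delta_j a\,\widetilde{\Delta}_j b\|_{L^{p_3}}\le\|\Delta_j a\|_{L^{p_1}}\|\widetilde{\Delta}_j b\|_{L^{p_2}}$, then use Bernstein $L^{p_3}\hookrightarrow L^{p_2}$ (costing $2^{3k/p_1}$) on the frequency-$\lesssim 2^k$ output; the exponents combine so that $2^{ks}\|\Delta_k R\|_{L^{p_2}}\lesssim\sum_{j\ge k-N_0}2^{(k-j)(s_1+s_2)}c_j d_j$, which sums provided $s_1+s_2>0$. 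If instead $\frac{1}{p_1}+\frac{1}{p_2}>1$, then $p_3<1$ is inadmissible, so I would land the product in $L^1$: upgrade $\widetilde{\Delta}_j b$ from $L^{p_2}$ to the conjugate exponent $L^{p_1'}$ by Bernstein (the legitimate direction, since $p_2<p_1'$ here), bound $\|\Delta_j a\,\widetilde{\Delta}_j b\|_{L^1}$ by Hölder, and finish with Bernstein $L^1\hookrightarrow L^{p_2}$. Tracking the exponents now yields $2^{ks}\|\Delta_k R\|_{L^{p_2}}\lesssim\sum_{j\ge k-N_0}2^{(k-j)\tau}c_j d_j$ with $\tau=s_1+s_2-3(\frac{1}{p_1}+\frac{1}{p_2}-1)>0$, which again sums over $k$.

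In each of the three pieces the final $k$-summation reduces to a convolution (or a truncated sum) of the normalized $\ell^1$-sequences $(c_j),(d_j)$, delivering $\|ab\|_{\dot{B}_{p_2,1}^{s_1+s_2-\frac{3}{p_1}}}\lesssim\|a\|_{\dot{B}_{p_1,1}^{s_1}}\|b\|_{\dot{B}_{p_2,1}^{s_2}}$. The single genuinely delicate point is the case analysis in the remainder: keeping the Hölder exponent admissible is what forces the $\max(0,\cdot)$ in the hypothesis, and verifying that the surviving power of $2^k$ equals precisely $\tau>0$ (equivalently $s_1+s_2>0$ in the first case) is the crux of the whole estimate. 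The paraproducts, by contrast, need only $s_1\le\frac{3}{p_1}$ and $s_2\le\frac{3}{p_2}$ and carry no positivity constraint, since their $k$-sums are restricted to $|j-k|\le 4$.
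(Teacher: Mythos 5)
Your proof is correct, and it is essentially the argument this paper relies on: note that Lemma \ref{product law} is stated here without proof (it is recalled from Lemma 2.2 of \cite{PP12}), but the paper's own proof of the companion Lemma \ref{product law 2} proceeds exactly as you do — Bony's decomposition, with the two paraproducts handled by Bernstein's inequality under $s_1\le\frac{3}{p_1}$, $s_2\le\frac{3}{p_2}$ (the third index $1$ absorbing the endpoint cases), and the remainder split according to whether $\frac{1}{p_1}+\frac{1}{p_2}\le 1$ (H\"older into $L^{p_3}$, needing $s_1+s_2>0$) or $\frac{1}{p_1}+\frac{1}{p_2}>1$ (H\"older into $L^1$ after a Bernstein upgrade to a conjugate exponent, needing $s_1+s_2>3(\frac{1}{p_1}+\frac{1}{p_2}-1)$). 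The only cosmetic difference is which factor you promote to the conjugate exponent in the second remainder case (you upgrade $b$ to $L^{p_1'}$, the paper's analogous step upgrades the other factor), which is an immaterial symmetry.
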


In this paper, however, we need more general product laws. As an
application of Littlewood-Paley theory, we only present the
following product laws in Besov spaces, which will be used in the
sequel. One may check \cite{RD11} for more general product laws in
this respect.

\begin{lem}\label{product law 2}
 Let $1\leq p\leq q$,    $s\leq\frac{3}{q}$ with $s+\frac{3}{q}>3\max\{0,
\frac{1}{p}+\frac{1}{q}-1\}$, $a\in
\dot{B}_{q,1}^{\frac{3}{q}}(\mathbb{R}^3), b\in \dot{B}_{p,1}^s(\mathbb{R}^3)$.
Then $ab\in \dot{B}_{p,1}^s(\mathbb{R}^3)$, and there holds
\begin{equation*}
\|ab\|_{\dot{B}_{p,1}^s}\lesssim\|a\|_{\dot{B}_{q,1}^{\frac{3}{q}}}\|b\|_{\dot{B}_{p,1}^s}.
\end{equation*}
\end{lem}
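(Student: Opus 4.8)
The plan is to expand $ab$ by Bony's decomposition \eqref{bony}, writing $ab = T_a b + T_b a + R(a,b)$, and to estimate the three pieces in $\dot{B}_{p,1}^{s}$ separately. Throughout I would record that, since $a\in\dot{B}_{q,1}^{\frac{3}{q}}$ and $b\in\dot{B}_{p,1}^{s}$, one may write $\|\Delta_j a\|_{L^q}=e_j 2^{-\frac{3j}{q}}\|a\|_{\dot{B}_{q,1}^{\frac{3}{q}}}$ and $\|\Delta_j b\|_{L^p}=d_j 2^{-js}\|b\|_{\dot{B}_{p,1}^{s}}$ for sequences $(d_j),(e_j)$ in the unit ball of $\ell^1(\mathbb{Z})$, and I would repeatedly invoke the Bernstein inequalities of Lemma \ref{bernstein}.

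For the first paraproduct $T_a b=\sum_j S_{j-1}a\,\Delta_j b$, the key point is the uniform bound $\|S_{j-1}a\|_{L^\infty}\lesssim\|a\|_{\dot{B}_{q,1}^{\frac{3}{q}}}$, which follows from Bernstein, $\|\Delta_\ell a\|_{L^\infty}\lesssim 2^{\frac{3\ell}{q}}\|\Delta_\ell a\|_{L^q}$, and summing over $\ell\leq j-2$. Since $\Delta_k(S_{j-1}a\,\Delta_j b)$ is spectrally supported in an annulus and vanishes unless $|j-k|\leq 4$, a direct summation gives $\|T_a b\|_{\dot{B}_{p,1}^{s}}\lesssim\|a\|_{\dot{B}_{q,1}^{\frac{3}{q}}}\|b\|_{\dot{B}_{p,1}^{s}}$ with no restriction on $s$. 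For the second paraproduct $T_b a=\sum_j S_{j-1}b\,\Delta_j a$, I would estimate $\|S_{j-1}b\,\Delta_j a\|_{L^p}\leq\|S_{j-1}b\|_{L^r}\|\Delta_j a\|_{L^q}$ with $\frac1r=\frac1p-\frac1q\geq 0$; Bernstein converts each low-frequency block into $2^{\frac{3\ell}{q}}\|\Delta_\ell b\|_{L^p}$, so that $\|S_{j-1}b\|_{L^r}\lesssim\|b\|_{\dot{B}_{p,1}^{s}}\sum_{\ell\leq j-2}2^{\ell(\frac3q-s)}d_\ell$. Here the hypothesis $s\leq\frac{3}{q}$ makes the exponent nonnegative, so after multiplying by $2^{ks}\|\Delta_j a\|_{L^q}$ and using $2^{(\ell-j)(\frac3q-s)}\leq 1$ for $\ell<j$, the whole sum collapses to $\lesssim\sum_{|j-k|\leq4}e_j$, which is summable in $k$.

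The main obstacle is the remainder $R(a,b)=\sum_j\Delta_j a\,\widetilde\Delta_j b$, because each summand is spectrally supported only in a ball, so $\Delta_k R(a,b)$ picks up all frequencies $j\geq k-N_0$ and one must recover $2^{3k(\cdot)}$ decay through Bernstein while paying the H\"older cost of the product. I expect the bookkeeping to split according to $\frac1p+\frac1q$. When $\frac1p+\frac1q\leq1$ I would bound the product in $L^m$ with $\frac1m=\frac1p+\frac1q$, using Bernstein $\|\Delta_k f\|_{L^p}\lesssim 2^{3k(\frac1m-\frac1p)}\|f\|_{L^m}=2^{\frac{3k}{q}}\|f\|_{L^m}$ together with H\"older $\|\Delta_j a\,\widetilde\Delta_j b\|_{L^m}\leq\|\Delta_j a\|_{L^q}\|\widetilde\Delta_j b\|_{L^p}$; when $\frac1p+\frac1q>1$ the exponent $m$ drops below $1$, so instead I would place the product in $L^1$ and pay an extra factor $2^{3j(\frac1p+\frac1q-1)}$ coming from Bernstein applied to $\widetilde\Delta_j b$ (legitimate since then $q'>p$). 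In either case one reaches $2^{ks}\|\Delta_k R(a,b)\|_{L^p}\lesssim\|a\|_{\dot{B}_{q,1}^{\frac3q}}\|b\|_{\dot{B}_{p,1}^{s}}\sum_{j\geq k-N_0}2^{(k-j)\beta}e_j d_j$, where $\beta=s+\frac3q$ in the first regime and $\beta=s+3-\frac3p$ in the second. The condition $s+\frac3q>3\max\{0,\frac1p+\frac1q-1\}$ is precisely $\beta>0$ in both regimes, which makes the geometric weight $2^{(k-j)\beta}$ summable for $j\geq k-N_0$; a discrete convolution (Young) argument then yields $\sum_k 2^{ks}\|\Delta_k R(a,b)\|_{L^p}\lesssim\|a\|_{\dot{B}_{q,1}^{\frac3q}}\|b\|_{\dot{B}_{p,1}^{s}}$, and adding the three bounds completes the proof.
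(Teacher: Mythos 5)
Your proposal is correct and follows essentially the same route as the paper: Bony's decomposition, the $L^\infty$ bound on $S_{j-1}a$ for $T_ab$, the H\"older exponent $\frac1r=\frac1p-\frac1q$ with $s\le\frac3q$ for $T_ba$, and the same two-case split of the remainder according to the sign of $\frac1p+\frac1q-1$, with the hypothesis $s+\frac3q>3\max\{0,\frac1p+\frac1q-1\}$ providing exactly the positive geometric exponent needed for summation. The only (immaterial) difference is that in the case $\frac1p+\frac1q>1$ you apply Bernstein to $\widetilde\Delta_j b$ after pairing $(L^q,L^{q'})$, whereas the paper applies it to $\Delta_{j'}a$ after pairing $(L^{p'},L^p)$; both yield the same factor $2^{(k-j)(s+3-\frac3p)}$.
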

\begin{proof} By applying Bony's decomposition, we can get that
\begin{equation*}
 ab=T_ab+T_ba+R(a,b).
\end{equation*}
We can obtain by using Lemma \ref{bernstein} that
\begin{equation*}
\begin{split}
\|\Delta_j(T_ab)\|_{L^p}&\lesssim\sum\limits_{|j'-j|\leq5}\|S_{j'-1}a\|_{L^{\infty}}\|\Delta_{j'}b\|_{L^p}\\
&\lesssim\sum\limits_{|j'-j|\leq5}\sum\limits_{j''\leq
j'-2}2^{j''\frac{3}{q}}\|\Delta_{j''}a\|_{L^q}\|\Delta_{j'}b\|_{L^p}\\
&\lesssim d_j2^{-js}\|a\|_{\dot{B}_{q,1}^{\frac{3}{q}}}\|b\|_{\dot{B}_{p,1}^s},
\end{split}
\end{equation*}
and for $1\leq p\leq q$,
$s\leq\frac{3}{q}$, we get by using Lemma \ref{bernstein} once again
that
\begin{equation*}
\|\Delta_j(T_ba)\|_{L^p}\lesssim\sum\limits_{|j'-j|\leq5}\|S_{j'-1}b\|_{L^{\frac{pq}{q-p}}}
\|\Delta_{j'}a\|_{L^q}
\lesssim d_j2^{-js}\|a\|_{\dot{B}_{q,1}^{\frac{3}{q}}}\|b\|_{\dot{B}_{p,1}^s}.
\end{equation*}
Then, in the case where $\frac{1}{p}+\frac{1}{q}>1$, and notice that $s>\frac{3}{p}-3$, we
obtain
\begin{eqnarray*}
\|\Delta_jR(a,b)\|_{L^p}&\lesssim& 2^{j(3-\frac{3}{p})}\sum\limits_{j'\geq
j-N_0}\|\Delta_{j'}a\widetilde{\Delta}_{j'}b\|_{L^1}\\
&\lesssim& 2^{j(3-\frac{3}{p})}\sum\limits_{j'\geq
j-N_0}\|\Delta_{j'}a\|_{L^{p'}}\|\widetilde{\Delta}_{j'}b\|_{L^p}\\
&\lesssim& 2^{j(3-\frac{3}{p})}\sum\limits_{j'\geq
j-N_0}2^{j'(\frac{3}{q}-3+\frac{3}{p})}\|\Delta_{j'}a\|_{L^q}\|\widetilde{\Delta}_{j'}b\|_{L^p}\\
&\lesssim& d_j2^{-js}\|a\|_{\dot{B}_{q,1}^{\frac{3}{q}}}\|b\|_{\dot{B}_{p,1}^s},
\end{eqnarray*}
where $\frac{1}{p}+\frac{1}{p'}=1$.
If $\frac{1}{p}+\frac{1}{q}\leq1$, we use the following estimate
\begin{equation*}
\|\Delta_jR(a,b)\|_{L^p}
\lesssim2^{j\frac{3}{q}}\sum\limits_{j'\geq
j-N_0}\|\Delta_{j'}a\|_{L^q}\|\widetilde{\Delta}_{j'}b\|_{L^p}\\
\lesssim d_j2^{-js}\|a\|_{\dot{B}_{q,1}^{\frac{3}{q}}}\|b\|_{\dot{B}_{p,1}^s}
\end{equation*}
for $s+\frac{3}{q}>0$. So we can deduce the proof of the lemma from
the above estimates.
\end{proof}

 And we also shall frequently use the
following version of Gagliardo-Nirenberg inequality \cite{NL1966}:
for any $1\leq p\leq\infty$,
\begin{equation}\label{G-N inequality}
  \|a\|_{L^{\infty}(\mathbb{R})}\leq
  C\|a\|_{L^p(\mathbb{R})}^{1-\frac{1}{p}}\|\nabla
  a\|_{L^p(\mathbb{R})}^{\frac{1}{p}},
\ \textrm{ when }a\in \mathcal{D}(\mathbb{R}).
\end{equation}
As applications of (\ref{G-N inequality}), the following estimates
shall be used frequently in this article.
\begin{lem}\label{u^3 (1)}
Let $1<p\leq m, r\leq\infty$, $u=(u^h,
u^3)\in\widetilde{L}^{\infty}(\mathbb{R}^+;\dot{B}_{p,1}^{-1+\frac{3}{p}}(\mathbb{R}^3))\cap
L^1(\mathbb{R}^+;\dot{B}_{p,1}^{1+\frac{3}{p}}(\mathbb{R}^3))$ with
$\displaystyle\mathrm{div} u=0$. Then there hold that
\begin{equation}\label{u^3 1}
\|\Delta_ju^3\|_{L_t^2(L_h^m(L_v^r))}\lesssim
d_j2^{-j(\frac{2}{m}+\frac{1}{r})}\|u^3\|_{\widetilde{L}_t^2(\dot{B}_{p,1}^{\frac{3}{p}})}
^{1-\frac{1}{p}+\frac{1}{r}}
\|u^h\|_{\widetilde{L}_t^2(\dot{B}_{p,1}^{\frac{3}{p}})}^{\frac{1}{p}-\frac{1}{r}},
\end{equation}
\begin{equation}\label{u^3 2}
\|\Delta_ju^3\|_{L_t^1(L_h^m(L_v^r))}\lesssim
d_j2^{-j(1+\frac{2}{m}+\frac{1}{r})}\|u^3\|_{L_t^1(\dot{B}_{p,1}^{1+\frac{3}{p}})}
^{1-\frac{1}{p}+\frac{1}{r}}
\|u^h\|_{L_t^1(\dot{B}_{p,1}^{1+\frac{3}{p}})}^{\frac{1}{p}-\frac{1}{r}}.
\end{equation}
\end{lem}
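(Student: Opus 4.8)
The plan is to exploit the divergence-free condition to trade the vertical derivative of $u^3$ for horizontal derivatives of $u^h$, and then to combine the one-dimensional Gagliardo--Nirenberg inequality (\ref{G-N inequality}) in the vertical variable with the anisotropic Bernstein inequalities of Lemma~\ref{bernstein}. Since $\mathrm{div}\,u=0$ and $\Delta_j$ commutes with both $\partial_3$ and $\mathrm{div}_h$, we have $\partial_3\Delta_ju^3=-\mathrm{div}_h\Delta_ju^h$. Fixing $t$ and $x_h$ and viewing $\Delta_ju^3(t,x_h,\cdot)$ as a function of the single variable $x_3$, (\ref{G-N inequality}) gives $\|\Delta_ju^3\|_{L_v^\infty}\lesssim\|\Delta_ju^3\|_{L_v^p}^{1-\frac1p}\|\mathrm{div}_h\Delta_ju^h\|_{L_v^p}^{\frac1p}$. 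Interpolating $L_v^r$ between $L_v^p$ and $L_v^\infty$ (which requires $p\le r$) via $\frac1r=\frac{p/r}{p}+\frac{1-p/r}{\infty}$ and computing the resulting exponents yields the pointwise-in-$(t,x_h)$ bound
$$\|\Delta_ju^3\|_{L_v^r}\lesssim\|\Delta_ju^3\|_{L_v^p}^{1-\frac1p+\frac1r}\,\|\mathrm{div}_h\Delta_ju^h\|_{L_v^p}^{\frac1p-\frac1r},$$
whose two exponents already match those on the right-hand sides of (\ref{u^3 1}) and (\ref{u^3 2}).

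Next I would integrate in the horizontal variable. Writing $\theta_1=1-\frac1p+\frac1r$ and $\theta_2=\frac1p-\frac1r$, so that $\theta_1+\theta_2=1$, Hölder's inequality in $x_h$ with the exponents $\frac{m}{\theta_1}$ and $\frac{m}{\theta_2}$ splits the product and gives
$$\|\Delta_ju^3\|_{L_h^m(L_v^r)}\lesssim\|\Delta_ju^3\|_{L_h^m(L_v^p)}^{\theta_1}\,\|\mathrm{div}_h\Delta_ju^h\|_{L_h^m(L_v^p)}^{\theta_2}.$$
Both factors are now genuinely frequency-localized, so the horizontal Bernstein inequality (using $p\le m$) passes from $L_h^m$ to $L_h^p=L^p$ at the cost of $2^{j(\frac2p-\frac2m)}$, while a further application absorbs the horizontal divergence through $\|\mathrm{div}_h\Delta_ju^h\|_{L^p}\lesssim 2^j\|\Delta_ju^h\|_{L^p}$. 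This produces
$$\|\Delta_ju^3\|_{L_h^m(L_v^r)}\lesssim 2^{j(\frac2p-\frac2m)}2^{j\theta_2}\,\|\Delta_ju^3\|_{L^p}^{\theta_1}\,\|\Delta_ju^h\|_{L^p}^{\theta_2}.$$

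Finally I would take the time norm and sum. For (\ref{u^3 1}) I apply Hölder in $t$ with exponents $\frac1{\theta_1},\frac1{\theta_2}$ to the $L_t^2$ norm, obtaining $\|\Delta_ju^3\|_{L_t^2(L^p)}^{\theta_1}\|\Delta_ju^h\|_{L_t^2(L^p)}^{\theta_2}$; by definition of the Chemin--Lerner norm each factor equals a normalized summable sequence times $2^{-3j/p}$ times $\|u^3\|_{\widetilde L_t^2(\dot B_{p,1}^{3/p})}$ (resp. $u^h$). Collecting the powers of $2^j$, one checks $(\frac2p-\frac2m)+\theta_2-\frac3p=-(\frac2m+\frac1r)$, exactly the claimed weight, and the coefficient $d_j:=c_j^{\theta_1}(c_j')^{\theta_2}$ formed from the two sequences lies in $\ell^1$ with sum $\le1$ by Hölder's inequality for series, yielding (\ref{u^3 1}). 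Estimate (\ref{u^3 2}) follows by the identical scheme with $L_t^1$ replacing $L_t^2$ (here $\widetilde L_t^1=L_t^1$) and with the weight $2^{-j(1+3/p)}$ coming from $\dot B_{p,1}^{1+3/p}$, the bookkeeping then giving $-(1+\frac2m+\frac1r)$. The only genuinely delicate point is this triple bookkeeping: ensuring that the interpolation exponents $\theta_1,\theta_2$ and the three sources of $2^j$ (the divergence, the horizontal $L^p\to L^m$ gain, and the Besov weight) cancel to produce precisely the stated power. No deeper obstruction arises, since it is exactly the divergence-free structure that renders the vertical derivative harmless.
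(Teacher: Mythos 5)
Your proposal is correct and follows essentially the same route as the paper: the divergence-free identity $\partial_3\Delta_ju^3=-\mathrm{div}_h\Delta_ju^h$, the one-dimensional Gagliardo--Nirenberg inequality interpolated to $L_v^r$, the anisotropic horizontal Bernstein inequality, and H\"older in $x_h$, $t$ and over $j$ to produce the $d_j$ and the Chemin--Lerner norms. The only difference is cosmetic ordering (you split by H\"older at the $L_h^m$ level and then apply Bernstein to each factor, while the paper applies Bernstein to pass $L_h^m\to L_h^p$ first), and your exponent bookkeeping matches the stated weights exactly.
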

\begin{proof} By applying Lemma \ref{bernstein},
Gagliardo-Nirenberg inequality and
$\partial_3u^3=-\displaystyle\mathrm{div}_hu^h$, we have
\begin{equation*}
\begin{split}
\|\Delta_ju^3\|_{L_t^2(L_h^m(L_v^r))}
&\lesssim2^{-j(\frac{2}{m}-\frac{2}{p})}\|\Delta_ju^3\|_{L_t^2(L_h^p(L_v^r))}\\
&\lesssim2^{-j(\frac{2}{m}-\frac{2}{p})}\|\Delta_ju^3\|_{L_t^2(L^p)}^{1-\frac{1}{p}+\frac{1}{r}}
\|\partial_3(\Delta_ju^3)\|_{L_t^2(L^p)}^{\frac{1}{p}-\frac{1}{r}}\\
&\lesssim
d_j2^{-j(\frac{2}{m}+\frac{1}{r})}\|u^3\|_{\widetilde{L}_t^2(\dot{B}_{p,1}^{\frac{3}{p}})}^{1-\frac{1}{p}+\frac{1}{r}}
\|u^h\|_{\widetilde{L}_t^2(\dot{B}_{p,1}^{\frac{3}{p}})}^{\frac{1}{p}-\frac{1}{r}}.
\end{split}
\end{equation*}
Similarly, we have (\ref{u^3 2}).
\end{proof}

\section{The proof of Theorem \ref{main} for $p\leq q$}\label{section3}
 At first, we give the estimates of the transport equation. We
consider the following free transport equation:
\begin{equation}\label{f}
\partial_t a+u\cdot\nabla a=0, \ \ a|_{t=0}=a_0.
\end{equation}
\begin{prop}\label{trans}
Let $1<p\leq q$, $u=(u^h,u^3)\in
\widetilde{L}_T^\infty(\dot{B}_{p,1}^{-1+\frac{3}{p}}(\mathbb{R}^3))\cap{L}_T^1(\dot{B}_{p,1}^{1+\frac{3}{p}}(\mathbb{R}^3))$
with $\displaystyle\mathrm{div}u=0$ and $a_0 \in
\dot{B}_{q,1}^{\frac{3}{q}}(\mathbb{R}^3)$. Then (\ref{f}) has a unique
solution $a\in \mathcal
{C}([0,T];\dot{B}_{q,1}^{\frac{3}{q}}(\mathbb{R}^3))$ so that
\begin{eqnarray}\label{density}
\|a\|_{\widetilde{L}_t^\infty(\dot{B}_{q,1}^{\frac{3}{q}})}\leq\|a_0\|_{\dot{B}_{q,1}^{\frac{3}{q}}}+
C\|a\|_{\widetilde{L}_t^\infty(\dot{B}_{q,1}^{\frac{3}{q}})}\{\|
u^h\|_{L_t^1({\dot{B}_{p,1}^{1+\frac{3}{p}}})} +\|
u^3\|_{L_t^1({\dot{B}_{p,1}^{1+\frac{3}{p}}})}^{1-\frac{1}{p}} \|
u^h\|_{L_t^1({\dot{B}_{p,1}^{1+\frac{3}{p}}})} ^{\frac{1}{p}}\}
\end{eqnarray}
for any $t\in [0,T].$
\end{prop}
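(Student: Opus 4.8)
The plan is to first settle existence and uniqueness by the standard theory of transport equations in Besov spaces, and then to concentrate on the a priori bound \eqref{density}, which is the real content. Since $\mathrm{div}\,u=0$ and $u\in L_T^1(\dot{B}_{p,1}^{1+\frac{3}{p}})\hookrightarrow L_T^1(\mathrm{Lip})$, the flow $\psi_t$ generated by $u$ is well defined and bi-Lipschitz, so $a(t)=a_0\circ\psi_t^{-1}$ is the unique solution and its continuity in time with values in $\dot{B}_{q,1}^{\frac{3}{q}}$ follows once \eqref{density} is available; I would therefore spend the bulk of the argument on the estimate. Localizing \eqref{f} by $\Delta_j$ gives
\begin{equation*}
\partial_t \Delta_j a + u\cdot\nabla \Delta_j a = [u\cdot\nabla,\Delta_j]a.
\end{equation*}
Taking the $L^q$ inner product against $|\Delta_j a|^{q-2}\Delta_j a$ and using $\mathrm{div}\,u=0$ to cancel the transport term $\int u\cdot\nabla|\Delta_j a|^q\,dx=0$, one finds $\frac{d}{dt}\|\Delta_j a\|_{L^q}\le \|[u\cdot\nabla,\Delta_j]a\|_{L^q}$, so that after integrating in time and multiplying by $2^{j\frac{3}{q}}$,
\begin{equation*}
2^{j\frac{3}{q}}\|\Delta_j a\|_{L_t^\infty(L^q)} \le 2^{j\frac{3}{q}}\|\Delta_j a_0\|_{L^q} + 2^{j\frac{3}{q}}\int_0^t \|[u\cdot\nabla,\Delta_j]a\|_{L^q}\,d\tau.
\end{equation*}
Summing over $j\in\mathbb{Z}$ then reduces matters to controlling the $\ell^1$ norm of the commutator term.

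For the commutator I would split $u\cdot\nabla=u^h\cdot\nabla_h+u^3\partial_3$ and treat the two pieces separately through Bony's decomposition \eqref{bony}. The horizontal piece $[u^h\cdot\nabla_h,\Delta_j]a$ is the classical transport commutator: decomposing it into paraproduct and remainder contributions and applying Lemma \ref{bernstein} (here $p\le q$ is used so that the horizontal Bernstein embedding from $L^p$ into $L^q$ is at our disposal), I expect the bound
\begin{equation*}
\sum_j 2^{j\frac{3}{q}}\int_0^t \|[u^h\cdot\nabla_h,\Delta_j]a\|_{L^q}\,d\tau \lesssim \|a\|_{\widetilde{L}_t^\infty(\dot{B}_{q,1}^{\frac{3}{q}})}\,\|u^h\|_{L_t^1(\dot{B}_{p,1}^{1+\frac{3}{p}})},
\end{equation*}
which yields the first term inside the braces of \eqref{density}.

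The heart of the proof, and the step I expect to be the main obstacle, is the vertical piece $[u^3\partial_3,\Delta_j]a$, since $u^3$ is allowed to be large and a naive bound by $\|u^3\|_{L_t^1(\dot{B}_{p,1}^{1+\frac{3}{p}})}$ would not close the argument. Here I would exploit the divergence-free constraint in the form $\partial_3 u^3=-\mathrm{div}_h u^h$ together with the one-dimensional Gagliardo--Nirenberg inequality \eqref{G-N inequality} in the vertical variable, exactly as packaged in Lemma \ref{u^3 (1)}: wherever a vertical $L_v^\infty$ factor such as $\|S_{j'-1}u^3\|_{L_v^\infty}$ arises from the paraproduct or remainder terms, I replace it by $\|u^3\|_{L_v^p}^{1-\frac{1}{p}}\|\mathrm{div}_h u^h\|_{L_v^p}^{\frac{1}{p}}$. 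After estimating the remaining anisotropic $L_h^\cdot(L_v^\cdot)$ norms with the anisotropic Bernstein inequalities of Lemma \ref{bernstein} and summing the resulting geometric series in the frequency indices, a Hölder inequality in time with conjugate exponents $\bigl(\tfrac{p}{p-1},\,p\bigr)$ distributes the integrability so as to produce
\begin{equation*}
\sum_j 2^{j\frac{3}{q}}\int_0^t \|[u^3\partial_3,\Delta_j]a\|_{L^q}\,d\tau \lesssim \|a\|_{\widetilde{L}_t^\infty(\dot{B}_{q,1}^{\frac{3}{q}})}\,\|u^3\|_{L_t^1(\dot{B}_{p,1}^{1+\frac{3}{p}})}^{1-\frac{1}{p}}\,\|u^h\|_{L_t^1(\dot{B}_{p,1}^{1+\frac{3}{p}})}^{\frac{1}{p}},
\end{equation*}
the second term in the braces. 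Adding the two contributions to the localized estimate and summing in $j$ gives \eqref{density}. The delicate bookkeeping is to keep every intermediate anisotropic norm summable, i.e. of the form $d_j 2^{-j\frac{3}{q}}$ times the norms, while splitting the $\frac{1}{p}$ and $1-\frac{1}{p}$ powers between $u^h$ and $u^3$; checking that the constraints $1<p\le q$ and the regularity index $s=\frac{3}{q}$ make all the Bernstein exponents and the remainder conditions admissible is precisely where the care is required.
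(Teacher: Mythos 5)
Your proposal is correct and follows essentially the same route as the paper: a Littlewood--Paley localization with an $L^q$ energy estimate (using $\mathrm{div}\,u=0$ to kill the transport term), Bony/commutator decompositions estimated via the anisotropic Bernstein inequalities, and---the key point---Lemma \ref{u^3 (1)} (the vertical Gagliardo--Nirenberg inequality combined with $\partial_3u^3=-\mathrm{div}_hu^h$, applied with $m=r=\infty$ and $m=q$, $r=\infty$) to convert every $u^3$ factor into $\|u^3\|^{1-\frac{1}{p}}\|u^h\|^{\frac{1}{p}}$, with $p\leq q$ entering exactly where you say it does. The only difference is organizational: you split $u\cdot\nabla=u^h\cdot\nabla_h+u^3\partial_3$ and estimate the full commutator $[u\cdot\nabla,\Delta_j]a$, whereas the paper first applies Bony's decomposition $u\cdot\nabla a=T_u\nabla a+\mathcal{R}(u,\nabla a)$ and exhibits the commutator structure inside the paraproduct term; these are equivalent regroupings of the same estimates.
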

\begin{proof}   The existence and uniqueness of solutions to
(\ref{f}) essentially follow from the estimate (\ref{density}) for
some appropriate solutions to (\ref{f}). For simplicity, here we
just present the estimate (\ref{density}) for smooth enough
solutions of (\ref{f}). In this case, applying Bony's decomposition
(\ref{bony}), we obtain
\begin{equation*}
u\cdot\nabla a = T_u\nabla a + \mathcal {R}(u,\nabla a ).
\end{equation*}
Applying $\Delta_j$ to the above equation and taking $L^2$ inner
product of the resulting equation with $|\Delta_ja|^{q-2}\Delta_ja$
(when $q\in(1, 2)$, we need to make some modification as Proposition
2.1 in \cite{RD01}), we obtain
\begin{eqnarray}\label{f2}
\frac{1}{q}\frac{d}{dt}\|\Delta_ja(t)\|_{L^q}^q+(\Delta_j(T_u\nabla
a)\mid|\Delta_ja|^{q-2}\Delta_ja)+(\Delta_j\mathcal {R}(u,\nabla a
)\mid|\Delta_ja|^{q-2}\Delta_ja)=0.
\end{eqnarray}
And one can get by using a standard commutator's argument that
\begin{eqnarray*}
\begin{split}
(\Delta_j(T_u\nabla a)\mid|\Delta_ja|^{q-2}\Delta_ja) &=\sum\limits_{|j'-j|\leq5}
\{([\Delta_j;S_{j'-1}u]\Delta_{j'}\nabla a\mid|\Delta_ja|^{q-2}\Delta_ja)\\
&\ \ +((S_{j'-1}u-S_{j-1}u)\Delta_j\Delta_{j'}\nabla a\mid|\Delta_ja|^{q-2}\Delta_ja)\}.\\
\end{split}\end{eqnarray*}
Then thanks to (\ref{f2}) and using an argument for the $L^q$ energy
estimate in \cite{RD01}, we arrive at
\begin{equation}\label{f3}
\begin{split}
\|\Delta_ja(t)\|_{L^q} &\leq\|\Delta_ja_0\|_{L^q}
+C\int_0^t\{\sum\limits_{|j'-j|\leq5}(\|[\Delta_j;S_{j'-1}u]\Delta_{j'}\nabla
a(t') \|_{L^q}\\
&\ \ +\|(S_{j'-1}u-S_{j-1}u)\Delta_j\Delta_{j'}\nabla a(t')\|_{L^q})
+\|\Delta_j\mathcal {R}(u,\nabla a )(t')\|_{L^q}\}dt'.
\end{split}\end{equation}
We first get by applying the classical estimate on commutators (see
\cite {RD11}.) and (\ref{u^3 2}) with $m=r=\infty$ that
\begin{equation*}
\begin{split}
& \sum\limits_{|j'-j|\leq5}\|[\Delta_j;S_{j'-1}u]\Delta_{j'}\nabla
a\|_{L_t^1(L^q)}\\
&\ \ \lesssim\sum\limits_{|j'-j|\leq5}(\|S_{j'-1}\nabla
u^h\|_{L_t^1(L^{\infty})}\|\Delta_{j'}a\|_{L_t^{\infty}(L^q)}+\|S_{j'-1}\nabla
u^3\|_{L_t^1(L^{\infty})}\|\Delta_{j'}a\|_{L_t^{\infty}(L^q)})\\
&\ \ \lesssim\sum\limits_{|j'-j|\leq5}\sum\limits_{j''\leq
j'-2}(\|\Delta_{j''}\nabla
u^h\|_{L_t^1(L^{\infty})}\|\Delta_{j'}a\|_{L_t^{\infty}(L^q)}+2^{j''}
\|\Delta_{j''}u^3\|_{L_t^1(L^{\infty})}\|\Delta_{j'}
a\|_{L_t^{\infty}(L^q)})\\
&\ \ \lesssim d_j2^{-j\frac{3}{q}}\|
a\|_{\widetilde{L}_t^{\infty}(\dot{B}_{q,1}^{\frac{3}{q}})}(\|
u^h\|_{L_t^1(\dot{B}_{p,1}^{1+\frac{3}{p}})}+\|
u^3\|_{L_t^1(\dot{B}_{p,1}^{1+\frac{3}{p}})}^{1-\frac{1}{p}}\|
u^h\|_{L_t^1(\dot{B}_{p,1}^{1+\frac{3}{p}})}^{\frac{1}{p}}).
\end{split}
\end{equation*}
Similarly, we get
\begin{equation*}
\begin{split}
&\sum\limits_{|j'-j|\leq5}\|(S_{j'-1}u-S_{j-1}u)\Delta_j\Delta_{j'}\nabla
a\|_{L_t^1(L^q)}\\
&\lesssim \sum\limits_{|j'-j|\leq5}(\|S_{j'-1}\nabla
u^h-S_{j-1}\nabla
u^h\|_{L_t^1(L^{\infty})}\|\Delta_ja\|_{L_t^{\infty}(L^q)}\\
&\ \ +\|S_{j'-1}\nabla
u^3-S_{j-1}\nabla u^3\|_{L_t^1(L^{\infty})}\|\Delta_ja\|_{L_t^{\infty}(L^q)})\\
&\lesssim d_j2^{-j\frac{3}{q}}\|
a\|_{\widetilde{L}_t^{\infty}(\dot{B}_{q,1}^{\frac{3}{q}})}(\|
u^h\|_{L_t^1(\dot{B}_{p,1}^{1+\frac{3}{p}})}+\|
u^3\|_{L_t^1(\dot{B}_{p,1}^{1+\frac{3}{p}})}^{1-\frac{1}{p}}\|
u^h\|_{L_t^1(\dot{B}_{p,1}^{1+\frac{3}{p}})}^{\frac{1}{p}}).
\end{split}
\end{equation*}
For $1<p\leq q$, thanks to Lemma \ref{bernstein} and (\ref{u^3 2})
with $m=q, r=\infty$, we obtain
\begin{equation*}
\begin{split}
& \|\Delta_j\mathcal {R}(u,\nabla a)\|_{L_t^1(L^q)}\\
&\lesssim \sum\limits_{j'\geq
j-N_0}(\|S_{j'+2}\nabla_ha\|_{L_t^{\infty}(L^{\infty})}\|\Delta_{j'}u^h\|_{L_t^1(L^q)}+
\|S_{j'+2}\partial_3a\|_{L_t^{\infty}(L_h^{\infty}(L_v^q))}
\|\Delta_{j'}u^3\|_{L_t^1(L_h^q(L_v^{\infty}))})\\
&\lesssim \sum\limits_{j'\geq j-N_0}\sum\limits_{j''\leq
j'+1}(2^{j''(1+\frac{3}{q})}\|\Delta_{j''}a\|_{L_t^{\infty}(L^q)}
2^{j'(\frac{3}{p}-\frac{3}{q})}
\|\Delta_{j'}u^h\|_{L_t^1(L^p)}\\
&\ \ +2^{j''(1+\frac{2}{q})}\|\Delta_{j''}a\|_{L_t^{\infty}(L^q)}
\|\Delta_{j'}u^3\|_{L_t^1(L_h^q(L_v^{\infty}))})\\
&\lesssim
d_j2^{-j\frac{3}{q}}\|
a\|_{\widetilde{L}_t^{\infty}(\dot{B}_{q,1}^{\frac{3}{q}})}(\|
u^h\|_{L_t^1(\dot{B}_{p,1}^{1+\frac{3}{p}})}+\|
u^3\|_{L_t^1(\dot{B}_{p,1}^{1+\frac{3}{p}})}^{1-\frac{1}{p}}\|
u^h\|_{L_t^1(\dot{B}_{p,1}^{1+\frac{3}{p}})}^{\frac{1}{p}}).
\end{split}
\end{equation*}
Substituting the above estimates into (\ref{f3}) and taking
summation for $j\in \mathbb{Z}$, we conclude the proof of
(\ref{density}).
\end{proof}

 As we all known, deriving the estimate for the pressure term is
the main difficulty in the study of the well-posedness of
incompressible inhomogeneous Navier-Stokes equations. In the
following, our goal is to provide the estimates for the pressure term. We first get taking
$\mathrm{div}$ to the momentum equation of (\ref{equation}) that
\begin{equation}\label{g} \
-\Delta\Pi=\mathrm{div}(a\nabla\Pi)-\mu\mathrm{div}(a\Delta
u)+\sum_{i,j=1}^2\partial_i\partial_j(u^iu^j)
+2\partial_3\mathrm{div}_h(u^3u^h)-2\partial_3(u^3\mathrm{div}_hu^h),
\end{equation}
where, for a vector field $u=(u^h,u^3)$, we denote
$\mathrm{div}_hu^h=\partial_1u^1+\partial_2u^2$.

 The following proposition concerning the estimate of the
pressure will be the main ingredient used in the estimate of $u^h$
and $u^3$. Denote
\begin{equation}
\begin{split}
A(a,u)\stackrel{def}{=}&
\mu\|a\|_{\widetilde{L}_t^\infty(\dot{B}_{q,1}^{\frac{3}{q}})}(\|u^h\|_{L_t^1(\dot{B}_{p,1}^{1+\frac{3}{p}})}+\|u^3\|_{L_t^1(\dot{B}_{p,1}^{1+\frac{3}{p}})})
\\
& +\|
u^3\|_{\widetilde{L}_t^2(\dot{B}_{p,1}^{\frac{3}{p}})}^{1-\frac{1}{p}}\|
u^h\|_{\widetilde{L}_t^2(\dot{B}_{p,1}^{\frac{3}{p}})}^{1+\frac{1}{p}}
+\|u^h\|_{\widetilde{L}_t^\infty(\dot{B}_{p,1}^{-1+\frac{3}{p}})}\|
u^3\|_{L_t^1(\dot{B}_{p,1}^{1+\frac{3}{p}})}^{1-\frac{1}{p}}\|
u^h\|_{L_t^1(\dot{B}_{p,1}^{1+\frac{3}{p}})}^{\frac{1}{p}}\\
&+\| u^h\|_{\widetilde{L}_t^\infty(\dot{B}_{p,1}^{-1+\frac{3}{p}})} \|
u^h\|_{L_t^1(\dot{B}_{p,1}^{1+\frac{3}{p}})}+\|
u^3\|_{\widetilde{L}_t^2(\dot{B}_{p,1}^{\frac{3}{p}})}^{1-\alpha}\|
u^h\|_{\widetilde{L}_t^2(\dot{B}_{p,1}^{\frac{3}{p}})}^{1+\alpha}.
\end{split}
\end{equation}
\begin{prop}\label{pressure}
Let $1<p\leq q<6$ with $\frac{1}{p}-\frac{1}{q}\leq\frac{1}{3}$, $a
\in \widetilde{L}_T^\infty(\dot{B}_{q,1}^{\frac{3}{q}}(\mathbb{R}^3))$ and
$u \in
\widetilde{L}_T^\infty(\dot{B}_{p,1}^{-1+\frac{3}{p}}(\mathbb{R}^3)) \cap
L_T^1(\dot{B}_{p,1}^{1+\frac{3}{p}}(\mathbb{R}^3))$. Then (\ref{g}) has a
unique solution $\nabla\Pi \in
L_T^1(\dot{B}_{p,1}^{-1+\frac{3}{p}}(\mathbb{R}^3))$ which decays to zero
when $|x|\rightarrow\infty$ so that for all $t \in [0,T]$, there
holds
\begin{equation}\label{h1}
\|\nabla\Pi\|_{L_t^1(\dot{B}_{p,1}^{-1+\frac{3}{p}})}
\leq C A(a,u),
\end{equation}
provided that
$C\|a\|_{\widetilde{L}_T^\infty(\dot{B}_{q,1}^{\frac{3}{q}})}\leq\frac{1}{2}$,
and $\alpha$ is defined as (\ref{definition alpha}).
\end{prop}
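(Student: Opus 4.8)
The plan is to read \eqref{g} as a linear elliptic equation for $\nabla\Pi$, solve it by a Neumann-series (contraction) argument in $L_t^1(\dot{B}_{p,1}^{-1+\frac3p})$, and then reduce the bound \eqref{h1} to estimating a fixed source term by term. Writing the right-hand side of \eqref{g} as $\mathrm{div}(a\nabla\Pi)+F$ with
\[
F\stackrel{def}{=}-\mu\,\mathrm{div}(a\Delta u)+\sum_{i,j=1}^2\partial_i\partial_j(u^iu^j)+2\partial_3\mathrm{div}_h(u^3u^h)-2\partial_3(u^3\mathrm{div}_hu^h),
\]
equation \eqref{g} becomes the fixed-point relation $\nabla\Pi=\nabla(-\Delta)^{-1}\mathrm{div}(a\nabla\Pi)+\nabla(-\Delta)^{-1}F$. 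Since $\nabla(-\Delta)^{-1}\mathrm{div}$ is a homogeneous Fourier multiplier of degree $0$, Lemma \ref{bernstein} shows it is bounded on $\dot{B}_{p,1}^{-1+\frac3p}$, and Lemma \ref{product law 2} then gives
\[
\|\nabla(-\Delta)^{-1}\mathrm{div}(a\nabla\Pi)\|_{\dot{B}_{p,1}^{-1+\frac3p}}\lesssim\|a\nabla\Pi\|_{\dot{B}_{p,1}^{-1+\frac3p}}\lesssim\|a\|_{\dot{B}_{q,1}^{\frac3q}}\|\nabla\Pi\|_{\dot{B}_{p,1}^{-1+\frac3p}}.
\]
Here the hypothesis $\frac1p-\frac1q\le\frac13$ is exactly what makes $s=-1+\frac3p$ satisfy $s\le\frac3q$ in Lemma \ref{product law 2} (the remaining condition $s+\frac3q>3\max\{0,\frac1p+\frac1q-1\}$ holds automatically for $p,q<6$). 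Under $C\|a\|_{\widetilde{L}_T^\infty(\dot{B}_{q,1}^{\frac3q})}\le\frac12$ this operator is a contraction, which yields existence, uniqueness (the decay at infinity being built into the $\dot{B}_{p,1}^{-1+\frac3p}\subset\mathcal S_h'$ functional setting), and the a priori bound $\|\nabla\Pi\|_{L_t^1(\dot{B}_{p,1}^{-1+\frac3p})}\le 2\|\nabla(-\Delta)^{-1}F\|_{L_t^1(\dot{B}_{p,1}^{-1+\frac3p})}$. It remains to estimate $\nabla(-\Delta)^{-1}F$ piece by piece.

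The first two pieces of $F$ are routine. Applying the degree-$0$ multiplier to $\mu\,\mathrm{div}(a\Delta u)$ and using Lemma \ref{product law 2} gives $\|a\Delta u\|_{\dot{B}_{p,1}^{-1+\frac3p}}\lesssim\|a\|_{\dot{B}_{q,1}^{\frac3q}}\|u\|_{\dot{B}_{p,1}^{1+\frac3p}}$, which after integration in time produces the first term of $A(a,u)$. For the purely horizontal quadratic term $\sum_{i,j=1}^2\partial_i\partial_j(u^iu^j)$, note that $\nabla(-\Delta)^{-1}\partial_i\partial_j$ is a multiplier of degree $1$, so one must control $\|u^hu^h\|_{\dot{B}_{p,1}^{\frac3p}}$; Bony's decomposition \eqref{bony} together with the standard product laws (see \cite{RD11}) distributes the two factors as $\lesssim\|u^h\|_{\dot{B}_{p,1}^{-1+\frac3p}}\|u^h\|_{\dot{B}_{p,1}^{1+\frac3p}}$, and Hölder in time ($L_t^\infty\times L_t^1$) yields the fourth term of $A(a,u)$.

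The heart of the matter is the last two pieces of $F$, which contain the possibly large component $u^3$. Here I would crucially use the divergence-free constraint through $\partial_3u^3=-\mathrm{div}_hu^h$ and the anisotropic estimates of Lemma \ref{u^3 (1)}. After applying the degree-$1$ multipliers $\nabla(-\Delta)^{-1}\partial_3\mathrm{div}_h$ and $\nabla(-\Delta)^{-1}\partial_3$, I would expand the products $u^3u^h$ and $u^3\mathrm{div}_hu^h$ by Bony's decomposition and estimate each dyadic block, placing the factor that carries $u^3$ in an anisotropic norm $L_h^m(L_v^r)$ and controlling it by Lemma \ref{u^3 (1)}, which supplies the weight $\|u^3\|_{\widetilde{L}_t^2(\dot{B}_{p,1}^{\frac3p})}^{1-\frac1p+\frac1r}\|u^h\|_{\widetilde{L}_t^2(\dot{B}_{p,1}^{\frac3p})}^{\frac1p-\frac1r}$. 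Multiplying by the remaining $u^h$ factor, summing the $d_j$-weighted blocks exactly as in the proof of Proposition \ref{trans}, and splitting in time ($L_t^2\times L_t^2$, and for one block $L_t^\infty\times L_t^1$), produces the bilinear quantities of the form $\|u^3\|_{\widetilde{L}_t^2(\dot{B}_{p,1}^{\frac3p})}^{1-\beta}\|u^h\|_{\widetilde{L}_t^2(\dot{B}_{p,1}^{\frac3p})}^{1+\beta}$, i.e. the second, third and fifth terms of $A(a,u)$.

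The main obstacle, and the origin of the case distinction in \eqref{definition alpha}, is the admissibility of the Lebesgue exponents in Lemma \ref{u^3 (1)}: the lemma requires $p\le m,r\le\infty$, forces $\frac1p-\frac1r\ge0$, and needs the anisotropic Bernstein gains of Lemma \ref{bernstein} to keep the right sign, which together tie the attainable power $1+\beta$ of the (small) factor $\|u^h\|$ to the range of $p$. For $1<p<5$ one can arrange $\beta=\frac1p$, recovering the exponent in \eqref{definition alpha}; but for $5\le p<6$ this choice violates the exponent constraints, and one must lose a little, taking $\beta=\varepsilon$ with $0<\varepsilon<\frac6p-1$, which is precisely why the fifth term of $A(a,u)$ is written with $\alpha$ rather than $\frac1p$. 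The delicate point is this exponent bookkeeping, and in particular keeping the power of $\|u^h\|$ strictly larger than that of $\|u^3\|$ so that the eventual smallness conditions \eqref{data condition0}--\eqref{data condition1} still permit $u^3$ to be large; once the exponents are chosen, the remaining dyadic summations are of the same type as in Proposition \ref{trans}.
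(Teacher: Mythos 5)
Your proposal is correct and follows essentially the same route as the paper: the paper likewise absorbs the $\mathrm{div}(a\nabla\Pi)$ term into the left-hand side using $C\|a\|_{\widetilde{L}_T^\infty(\dot{B}_{q,1}^{\frac{3}{q}})}\le\frac12$ (your Neumann-series framing is the same mechanism), treats $a\nabla\Pi$, $a\Delta u$ and $u^h\otimes u^h$ by the product laws, and isolates the terms $u^3u^h$ and $u^3\mathrm{div}_hu^h$ into two dedicated lemmas (Lemmas \ref{pressure1} and \ref{pressure2}) proved by Bony's decomposition plus the anisotropic estimate of Lemma \ref{u^3 (1)}, with the case split at $p=5$ arising from the convergence of the high-frequency remainder sum (one needs $1-\frac{5}{p}<0$, respectively $1-\frac{6}{p}+\varepsilon<0$), exactly as you anticipate. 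One cosmetic slip: $\nabla(-\Delta)^{-1}\partial_3$ is a degree-$0$ multiplier, not degree $1$, so for that term one estimates $u^3\mathrm{div}_hu^h$ in $\dot{B}_{p,1}^{-1+\frac{3}{p}}$, which is precisely what Lemma \ref{pressure2} delivers.
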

The proof of this proposition will mainly be based on the following
lemmas :
\begin{lem}\label{pressure1}
Let $p>1$, under the assumptions of Proposition \ref{pressure}, one
has
\begin{equation*}
\begin{split}
\|\Delta_j(u^3u^h)\|_{L_t^1(L^p)}& \lesssim
d_j2^{-j\frac{3}{p}}(\|u^h\|_{\widetilde{L}_t^\infty(\dot{B}_{p,1}^{-1+\frac{3}{p}})}\|
u^3\|_{L_t^1(\dot{B}_{p,1}^{1+\frac{3}{p}})}^{1-\frac{1}{p}}\|
u^h\|_{L_t^1(\dot{B}_{p,1}^{1+\frac{3}{p}})}^{\frac{1}{p}}\\
&\ \ +\|
u^3\|_{\widetilde{L}_t^2(\dot{B}_{p,1}^{\frac{3}{p}})}^{1-\frac{1}{p}}\|
u^h\|_{\widetilde{L}_t^2(\dot{B}_{p,1}^{\frac{3}{p}})}^{1+\frac{1}{p}}).
\end{split}
\end{equation*} for all $t\leq T$.
\end{lem}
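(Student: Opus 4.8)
The plan is to expand the product by Bony's decomposition (\ref{bony}),
\begin{equation*}
u^3u^h = T_{u^3}u^h + T_{u^h}u^3 + R(u^3,u^h),
\end{equation*}
and to estimate each of the three pieces in $L_t^1(L^p)=L_t^1(L_h^p(L_v^p))$ by applying the anisotropic Gagliardo--Nirenberg estimates (\ref{u^3 1}) and (\ref{u^3 2}) of Lemma \ref{u^3 (1)} to the $u^3$ factor, while using only the anisotropic Bernstein inequalities of Lemma \ref{bernstein} on the $u^h$ factor. The guiding principle is that the first term in the claim, the one carrying $\|u^h\|_{\widetilde{L}_t^\infty(\dot{B}_{p,1}^{-1+\frac{3}{p}})}$, should arise from $T_{u^h}u^3$ with the time integral split as $L_t^\infty\cdot L_t^1$, whereas the second term should arise from $T_{u^3}u^h$ and $R(u^3,u^h)$ with the time integral split as $L_t^2\cdot L_t^2$.

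For $T_{u^h}u^3=\sum_{j'}S_{j'-1}u^h\,\Delta_{j'}u^3$, I would retain the anisotropy and estimate, for $|j'-j|\leq5$,
\begin{equation*}
\|\Delta_j(T_{u^h}u^3)\|_{L_t^1(L^p)}\lesssim \sum_{|j'-j|\leq5}\|S_{j'-1}u^h\|_{L_t^\infty(L_h^\infty(L_v^p))}\,\|\Delta_{j'}u^3\|_{L_t^1(L_h^p(L_v^\infty))}.
\end{equation*}
The horizontal Bernstein inequality gives $\|\Delta_{j''}u^h\|_{L_h^\infty(L_v^p)}\lesssim 2^{\frac{2j''}{p}}\|\Delta_{j''}u^h\|_{L^p}$, whence $\|S_{j'-1}u^h\|_{L_t^\infty(L_h^\infty(L_v^p))}\lesssim 2^{j'(1-\frac{1}{p})}\|u^h\|_{\widetilde{L}_t^\infty(\dot{B}_{p,1}^{-1+\frac{3}{p}})}$, while (\ref{u^3 2}) with $m=p,\ r=\infty$ gives $\|\Delta_{j'}u^3\|_{L_t^1(L_h^p(L_v^\infty))}\lesssim d_{j'}2^{-j'(1+\frac{2}{p})}\|u^3\|_{L_t^1(\dot{B}_{p,1}^{1+\frac{3}{p}})}^{1-\frac{1}{p}}\|u^h\|_{L_t^1(\dot{B}_{p,1}^{1+\frac{3}{p}})}^{\frac{1}{p}}$. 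The crucial cancellation is that the two powers of $2^{j'}$ combine to $2^{j'(1-\frac{1}{p})-j'(1+\frac{2}{p})}=2^{-j'\frac{3}{p}}$, so that after summing over $|j'-j|\leq5$ one recovers exactly the factor $d_j2^{-j\frac{3}{p}}$ and the first term of the statement.

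For the paraproduct $T_{u^3}u^h=\sum_{j'}S_{j'-1}u^3\,\Delta_{j'}u^h$, I would instead split the time integral as $L_t^2\cdot L_t^2$ and invoke (\ref{u^3 1}) with $m=r=\infty$ to bound $\|S_{j'-1}u^3\|_{L_t^2(L^\infty)}\lesssim\|u^3\|_{\widetilde{L}_t^2(\dot{B}_{p,1}^{\frac{3}{p}})}^{1-\frac{1}{p}}\|u^h\|_{\widetilde{L}_t^2(\dot{B}_{p,1}^{\frac{3}{p}})}^{\frac{1}{p}}$, together with $\|\Delta_{j'}u^h\|_{L_t^2(L^p)}\lesssim d_{j'}2^{-j'\frac{3}{p}}\|u^h\|_{\widetilde{L}_t^2(\dot{B}_{p,1}^{\frac{3}{p}})}$; this produces the second term directly. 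The remainder $R(u^3,u^h)=\sum_{j'}\Delta_{j'}u^3\,\widetilde{\Delta}_{j'}u^h$ is treated identically, placing $\Delta_{j'}u^3$ in $L_t^2(L^\infty)$ via (\ref{u^3 1}) with $m=r=\infty$ and $\widetilde{\Delta}_{j'}u^h$ in $L_t^2(L^p)$; the only difference is that the sum now runs over $j'\geq j-N_0$, but since the weight $2^{-j'\frac{3}{p}}$ is summable the series reconstitutes a generic $d_j2^{-j\frac{3}{p}}$, again controlled by the second term.

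Adding the three contributions completes the estimate. The step I expect to be the main obstacle is the scaling in $T_{u^h}u^3$: the naive bound $\|S_{j'-1}u^h\|_{L^\infty}\lesssim 2^{j'}\|u^h\|_{\widetilde{L}_t^\infty(\dot{B}_{p,1}^{-1+\frac{3}{p}})}$ overshoots the target homogeneity by a full factor $2^{j'}$, because $\dot{B}_{p,1}^{-1+\frac{3}{p}}$ is too rough for the $L^\infty$ embedding. It is only by keeping the anisotropic splitting $L_h^\infty(L_v^p)$ for $u^h$ against $L_h^p(L_v^\infty)$ for $u^3$ — so that the Bernstein cost drops to $2^{j'(1-\frac{1}{p})}$ and is exactly absorbed by the gain $2^{-j'(1+\frac{2}{p})}$ furnished by the divergence-free estimate (\ref{u^3 2}) for $u^3$ — that the correct scaling $2^{-j\frac{3}{p}}$ is restored. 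This cancellation is exactly the mechanism that permits $u^3$ to be large.
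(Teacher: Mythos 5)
Your proposal is correct and follows essentially the same route as the paper: Bony's decomposition, the paraproduct $T_{u^3}u^h$ estimated via the $L_t^2\times L_t^2$ splitting with (\ref{u^3 1}), and $T_{u^h}u^3$ estimated via the $L_t^\infty\times L_t^1$ splitting combining the horizontal Bernstein cost $2^{j'(1-\frac{1}{p})}$ with the gain $2^{-j'(1+\frac{2}{p})}$ from (\ref{u^3 2}), exactly as in the paper. The only (harmless) deviation is the remainder: the paper pairs $\Delta_{j'}u^3\in L_t^1(L_h^p(L_v^\infty))$ with $\widetilde{\Delta}_{j'}u^h\in L_t^\infty(L_h^\infty(L_v^p))$ so that $R$ contributes to the first term, whereas you pair $L_t^2(L^\infty)$ with $L_t^2(L^p)$ so that it contributes to the second; since $\frac{3}{p}>0$ the sum over $j'\geq j-N_0$ indeed resums to $d_j2^{-j\frac{3}{p}}$, so both variants are valid.
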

\begin{proof} We first get by applying Bony's decomposition
(\ref{bony}) that
\begin{equation}\label{g1}
u^3u^h=T_{u^3}u^h+T_{u^h}u^3+R(u^3,u^h).
\end{equation}
Applying Lemma \ref{bernstein} and (\ref{u^3 1}) with $m=r=\infty$
gives rise to
\begin{equation*}
\begin{split}
\|\Delta_j(T_{u^3}u^h)\|_{L_t^1(L^p)} &
\lesssim\sum\limits_{|j'-j|\leq5}\|S_{j'-1}u^3\|_{L_t^2(L^\infty)}
\|\Delta_{j'}u^h\|_{L_t^2(L^p)}\\
&\ \ \lesssim \sum_{|j'-j|\leq5}\sum_{j''\leq j'-2}
\|\Delta_{j''}u^3\|_{L_t^2(L^{\infty})}\|\Delta_{j'}u^h\|_{L_t^2(L^p)}\\
&\ \ \lesssim
d_j2^{-j\frac{3}{p}}\|u^3\|_{\widetilde{L}_t^2(\dot{B}_{p,1}^{\frac{3}{p}})}^{1-\frac{1}{p}}\|
u^h\|_{\widetilde{L}_t^2(\dot{B}_{p,1}^{\frac{3}{p}})}^{1+\frac{1}{p}}.
\end{split}
\end{equation*}
And while (\ref{u^3 2}) applied with $m=p, r=\infty$ gives
\begin{equation*}
\begin{split} \|\Delta_j(T_{u^h}u^3)\|_{L_t^1(L^p)}
&
\lesssim\sum\limits_{|j'-j|\leq5}\|S_{j'-1}u^h\|_{L_t^\infty(L_h^\infty(L_v^p))}
\|\Delta_{j'}u^3\|_{L_t^1(L_h^p(L_v^\infty))}\\
&\ \ \lesssim\sum\limits_{|j'-j|\leq5}\sum\limits_{j''\leq
j'-2}2^{j''\frac{2}{p}}\|\Delta_{j''}u^h\|_{L_t^\infty(L^p)}\|\Delta_{j'}u^3\|_{L_t^1(L_h^p(L_v^\infty))}\\
&\ \ \lesssim
d_j2^{-j\frac{3}{p}}\|u^h\|_{\widetilde{L}_t^\infty(\dot{B}_{p,1}^{-1+\frac{3}{p}})}\|
u^3\|_{L_t^1(\dot{B}_{p,1}^{1+\frac{3}{p}})}^{1-\frac{1}{p}}\|
u^h\|_{L_t^1(\dot{B}_{p,1}^{1+\frac{3}{p}})}^{\frac{1}{p}},
\end{split}
\end{equation*}
and
\begin{equation*}
\begin{split}
\|\Delta_jR(u^3,u^h)\|_{L_t^1(L^p)}& \lesssim\sum\limits_{j'\geq
j-N_0}\|\Delta_{j'}u^3\|_{L_t^1(L_h^p(L_v^\infty))}\|\widetilde{\Delta}_{j'}u^h\|_{L_t^\infty(L_h^\infty(L_v^p))}\\
&\lesssim\sum\limits_{j'\geq
j-N_0}2^{j'\frac{2}{p}}\|\Delta_{j'}u^3\|_{L_t^1(L_h^p(L_v^\infty))}\|\widetilde{\Delta}_{j'}u^h\|_{L_t^\infty(L^p)}\\
&\lesssim
d_j2^{-j\frac{3}{p}}\|u^h\|_{\widetilde{L}_t^\infty(\dot{B}_{p,1}^{-1+\frac{3}{p}})}\|
u^3\|_{L_t^1(\dot{B}_{p,1}^{1+\frac{3}{p}})}^{1-\frac{1}{p}}\|
u^h\|_{L_t^1(\dot{B}_{p,1}^{1+\frac{3}{p}})}^{\frac{1}{p}}.
\end{split}
\end{equation*}
Along with (\ref{g1}), we prove the inequality of Lemma
\ref{pressure1}. {\hfill $\square$\medskip}
\begin{lem}\label{pressure2}
Under the assumptions of Proposition \ref{pressure}, when $1<p<6$,
one has
\begin{eqnarray*}
\|\Delta_j(u^3\mathrm{div}_hu^h)\|_{L_t^1(L^p)}\lesssim
d_j2^{j(1-\frac{3}{p})}(\|
u^3\|_{\widetilde{L}_t^2(\dot{B}_{p,1}^{\frac{3}{p}})}^{1-\frac{1}{p}}\|
u^h\|_{\widetilde{L}_t^2(\dot{B}_{p,1}^{\frac{3}{p}})}^{1+\frac{1}{p}}+\|
u^3\|_{\widetilde{L}_t^2(\dot{B}_{p,1}^{\frac{3}{p}})}^{1-\alpha}\|
u^h\|_{\widetilde{L}_t^2(\dot{B}_{p,1}^{\frac{3}{p}})}^{1+\alpha}),
\end{eqnarray*}
whereas
\begin{equation*}
 \alpha=\left\{\begin{array}{l}
\frac{1}{p},\ \ 1<p<5\\
\varepsilon,\ \ 5\leq p<6\\
\end{array}\right.
\end{equation*}
for $0<\varepsilon<\frac{6}{p}-1$.
\end{lem}
\textbf{Proof.} We first get by applying Bony's
decomposition(\ref{bony}) that
\begin{equation}\label{g2}
u^3\mathrm{div}_hu^h=T_{u^3}\mathrm{div}_hu^h+T_{\mathrm{div}_hu^h}u^3+R(u^3,\mathrm{div}_hu^h).
\end{equation}
Applying (\ref{u^3 1}) with $m=r=\infty$, we obtain
\begin{equation*}
\begin{split}
\|\Delta_j(T_{u^3}\mathrm{div}_hu^h)\|_{L_t^1(L^p)}
&\lesssim\sum_{|j'-j|\leq5}\sum_{j''\leq j'-2}
\|\Delta_{j''}u^3\|_{L_t^2(L^{\infty})}
\|\Delta_{j'}(\mathrm{div}_hu^h)\|_{L_t^2(L^p)}\\
&\lesssim d_j2^{j(1-\frac{3}{p})}\|
u^3\|_{\widetilde{L}_t^2(\dot{B}_{p,1}^{\frac{3}{p}})}^{1-\frac{1}{p}}\|
u^h\|_{\widetilde{L}_t^2(\dot{B}_{p,1}^{\frac{3}{p}})}^{1+\frac{1}{p}},
\end{split}
\end{equation*}
and similarly, we get
\begin{equation*}
\begin{split}
\|\Delta_j(T_{\mathrm{div}_hu^h}u^3)\|_{L_t^1(L^p)}
&\lesssim\sum\limits_{|j'-j|\leqslant5}\|S_{j'-1}(\mathrm{div}_hu^h)\|_{L_t^2(L_h^\infty(L_v^p))}
\|\Delta_{j'}u^3\|_{L_t^2(L_h^p(L_v^\infty))}\\
&\lesssim\sum\limits_{|j'-j|\leqslant5}\sum\limits_{j''\leqslant
j'-2}2^{j''\frac{2}{p}}\|\Delta_{j''}(\mathrm{div}_hu^h)\|_{L_t^2(L^p)}\|\Delta_{j'}u^3\|_{L_t^2(L_h^p(L_v^\infty))}\\
&\lesssim d_j2^{j(1-\frac{3}{p})}\|
u^3\|_{\widetilde{L}_t^2(\dot{B}_{p,1}^{\frac{3}{p}})}^{1-\frac{1}{p}}\|
u^h\|_{\widetilde{L}_t^2(\dot{B}_{p,1}^{\frac{3}{p}})}^{1+\frac{1}{p}}.
\end{split}
\end{equation*}
For $1<p<5$, while (\ref{u^3 1}) applied with $m=p, r=\infty$ gives
\begin{equation*}
\begin{split}
\|\Delta_j(R(u^3,\mathrm{div}_hu^h))\|_{L_t^1(L^p)}
&\lesssim2^{j\frac{2}{p}}\sum\limits_{j'\geqslant
j-N_0}\|\widetilde{\Delta}_{j'}(\mathrm{div}_hu^h)\Delta_{j'}u^3\|_{L_t^1(L_h^{\frac{p}{2}}(L_v^p))}\\
&\lesssim2^{j\frac{2}{p}}\sum\limits_{j'\geq
j-N_0}\|\widetilde{\Delta}_{j'}(\mathrm{div}_hu^h)\|_{L_t^2(L^p)}\|\Delta_{j'}u^3\|_{L_t^2(L_h^p(L_v^\infty))}\\
&\lesssim d_j2^{j(1-\frac{3}{p})}\|
u^3\|_{\widetilde{L}_t^2(\dot{B}_{p,1}^{\frac{3}{p}})}^{1-\frac{1}{p}}\|
u^h\|_{\widetilde{L}_t^2(\dot{B}_{p,1}^{\frac{3}{p}})}^{1+\frac{1}{p}}.
\end{split}
\end{equation*}
For $5\leq p<6$, we get by using (\ref{u^3 1}) with $m=p,\ \
\frac{1}{r}=\frac{1}{p}-\varepsilon$ that
\begin{equation*}
\begin{split}
\|\Delta_j(R(u^3,\mathrm{div}_hu^h))\|_{L_t^1(L^p)}&\lesssim2^{j(\frac{3}{p}-\varepsilon)}\sum\limits_{j'\geqslant
j-N_0}\|\widetilde{\Delta}_{j'}(\mathrm{div}_hu^h)\Delta_{j'}u^3\|_{L_t^1(L_h^{\frac{p}{2}}(L_v^{1/(\frac{2}{p}-\varepsilon)}))}\\
&\lesssim2^{j(\frac{3}{p}-\varepsilon)}\sum\limits_{j'\geqslant j-N_0}\|\widetilde{\Delta}_{j'}(\mathrm{div}_hu^h)\|_{L_t^2(L^p)}
\|\Delta_{j'}u^3\|_{L_t^2(L_h^p(L_v^{1/(\frac{1}{p}-\varepsilon)}))}\\
&\lesssim d_j2^{j(1-\frac{3}{p})}\|
u^3\|_{\widetilde{L}_t^2(\dot{B}_{p,1}^{\frac{3}{p}})}^{1-\varepsilon}\|
u^h\|_{\widetilde{L}_t^2(\dot{B}_{p,1}^{\frac{3}{p}})}^{1+\varepsilon}
\end{split}
\end{equation*}
where $1-\frac{6}{p}+\varepsilon<0$. Thus again thanks to
(\ref{g2}), we conclude the proof of Lemma \ref{pressure2}.
\end{proof}

\noindent\textbf{Proof of proposition
\ref{pressure}:} Again as both the proof of the existence and
uniqueness of solutions to (\ref{g}) is essentially followed by the
estimates  (\ref{h1}) for some appropriate approximate solutions of
(\ref{g}). For simplicity, we just prove (\ref{h1}) for smooth
enough solutions of (\ref{g}). Indeed thanks to (\ref{g}) and
$\mathrm{div}u=0$, we have
\begin{equation}
\begin{split}\label{g3}
\nabla\Pi&
=\nabla(-\Delta)^{-1}[\mathrm{div}(a\nabla\Pi)+\sum_{i,j=1}^2\partial_i\partial_j(u^iu^j)+2\partial_3\sum\limits_{i=1}^2\partial_i(u^3u^i)\\
&\ \ -2\partial_3(u^3\mathrm{div}_hu^h) -\mu\mathrm{div}_h(a\Delta
u^h)-\mu\partial_3(a\Delta u^3)].
\end{split}
\end{equation}
Applying $\Delta_j$ to the above equation and using Lemma
\ref{bernstein} leads to
\begin{equation}
\begin{split}\label{g4}
\|\Delta_j(\nabla\Pi)\|_{L_t^1(L^p)}&
\lesssim\|\Delta_j(a\nabla\Pi)\|_{L_t^1(L^p)}+2^j(\|\Delta_j(u^h\otimes
u^h)\|_{L_t^1(L^p)}+\|\Delta_j(u^3u^h)\|_{L_t^1(L^p)})\\
&\ \
+\|\Delta_j(u^3\mathrm{div}_hu^h)\|_{L_t^1(L^p)}+\mu\|\Delta_j(a\Delta
u^h)\|_{L_t^1(L^p)}+\mu\|\Delta_j(a\Delta u^3)\|_{L_t^1(L^p)}.
\end{split}
\end{equation}
For $\frac{1}{p}-\frac{1}{q}\leq\frac{1}{3}$, applying Lemma
\ref{product law 2} gives rise to
\begin{eqnarray*}
\|\Delta_j(a\nabla\Pi)\|_{L_t^1(L^p)}&\lesssim&
d_j2^{j(1-\frac{3}{p})}\|a\|_{\widetilde{L}_t^\infty(\dot{B}_{q,1}^{\frac{3}{q}})}\|\nabla\Pi\|_{L_t^1(\dot{B}_{p,1}^{-1+\frac{3}{p}})}\
\
and\\
\|\Delta_j(u^h\otimes u^h)\|_{L_t^1(L^p)}&\lesssim&
d_j2^{-j\frac{3}{p}}\|u^h\|_{\widetilde{L}_t^\infty(\dot{B}_{p,1}^{-1+\frac{3}{p}})}\|u^h\|_{L_t^1(\dot{B}_{p,1}^{1+\frac{3}{p}})},\\
\|\Delta_j(a\Delta u^h)\|_{L_t^1(L^p)}&\lesssim&
d_j2^{j(1-\frac{3}{p})}\|a\|_{\widetilde{L}_t^\infty(\dot{B}_{q,1}^{\frac{3}{q}})}\|u^h\|_{L_t^1(\dot{B}_{p,1}^{1+\frac{3}{p}})},\\
\|\Delta_j(a\Delta u^3)\|_{L_t^1(L^p)}&\lesssim&
d_j2^{j(1-\frac{3}{p})}\|a\|_{\widetilde{L}_t^\infty(\dot{B}_{q,1}^{\frac{3}{q}})}\|u^3\|_{L_t^1(\dot{B}_{p,1}^{1+\frac{3}{p}})},
\end{eqnarray*}
which along with Lemma \ref{pressure1}, Lemma \ref{pressure2} and
(\ref{g4}) implies that
\begin{equation*}
\|\nabla\Pi\|_{L_t^1(\dot{B}_{p,1}^{-1+\frac{3}{p}})}\leq
C\{\|a\|_{\widetilde{L}_t^\infty(\dot{B}_{q,1}^{\frac{3}{q}})}
\|\nabla\Pi\|_{L_t^1(\dot{B}_{p,1}^{-1+\frac{3}{p}})} +A(a,u)\}
\end{equation*}
for all $t\leq T$. So provided that
$C\|a\|_{\widetilde{L}_t^\infty(\dot{B}_{q,1}^{\frac{3}{q}})}\leq\frac{1}{2}$,
 we conclude the proof of (\ref{h1}).    {\hfill $\square$\medskip}\\

 Motivated by
\cite{GP10, PP11, PP12, ZT09}, and based on the estimate of the
pressure, we shall deal with the $L^p$ type energy estimate for
$u^h$ and $u^3$ separately.
\begin{prop}\label{u^h estimate}
Under the assumption of Proposition \ref{pressure} and
\begin{equation}\label{condition 0}
C\|a\|_{\widetilde{L}_T^{\infty}(\dot{B}_{q,1}^{\frac{3}{q}})}\leq\frac{1}{2}
\end{equation} there holds
\begin{equation}\label{horizontal}
\|u^h\|_{\widetilde{L}_t^\infty(\dot{B}_{p,1}^{-1+\frac{3}{p}})}+\bar{c}\mu\|
u^h\|_{L_t^1(\dot{B}_{p,1}^{1+\frac{3}{p}})}\leq \|
u_0^h\|_{\dot{B}_{p,1}^{-1+\frac{3}{p}}}+CA(a,u).
\end{equation}
\end{prop}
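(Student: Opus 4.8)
The plan is to derive an $L^p$-type energy estimate for the horizontal component $u^h$ by writing its evolution equation, applying the Littlewood-Paley block $\Delta_j$, testing against $|\Delta_j u^h|^{p-2}\Delta_j u^h$, and then carefully bounding each nonlinear contribution against the quantity $A(a,u)$ introduced above. The starting point is the momentum equation in \eqref{equation}. Writing $\mu\Delta u^h = \mu\Delta u^h - a\,\mu\Delta u^h + a\,\mu\Delta u^h$ and isolating the heat operator, the equation for $u^h$ reads, schematically,
\begin{equation*}
\partial_t u^h + u\cdot\nabla u^h - \mu\Delta u^h = \mu\, a\,\Delta u^h - (1+a)\nabla_h\Pi.
\end{equation*}
First I would apply $\Delta_j$ and use the standard $L^p$ energy method of \cite{RD01} (handling $p\in(1,2)$ by the usual regularization as in Proposition~\ref{trans}) to obtain, for each $j$, an estimate of the form
\begin{equation*}
\|\Delta_j u^h(t)\|_{L^p} + \bar c\,\mu 2^{2j}\!\int_0^t\!\|\Delta_j u^h\|_{L^p}\,dt'
\leq \|\Delta_j u_0^h\|_{L^p} + \int_0^t \|\Delta_j F\|_{L^p}\,dt',
\end{equation*}
where the $2^{2j}$ factor comes from the spectral localization of $-\mu\Delta$ (Bernstein's lemma), and $F$ collects the transport term $u\cdot\nabla u^h$, the term $\mu\, a\,\Delta u^h$, and the pressure contributions $\nabla_h\Pi$ and $a\nabla_h\Pi$.

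The core of the argument is then to bound each piece of $F$. The commutator and transport structure of $u\cdot\nabla u^h$ is treated exactly as in the proof of Proposition~\ref{trans}: using Bony's decomposition $u\cdot\nabla u^h = T_u\nabla u^h + \mathcal R(u,\nabla u^h)$, a commutator argument, and crucially Lemma~\ref{u^3 (1)} to gain the smallness in the $u^3$-direction through $\partial_3 u^3 = -\mathrm{div}_h u^h$. This is what produces the mixed terms in $A(a,u)$ of the form $\|u^3\|^{1-1/p}\|u^h\|^{1/p}$ and $\|u^3\|^{1-\alpha}\|u^h\|^{1+\alpha}$. For $\mu\, a\,\Delta u^h$ I would invoke the product law of Lemma~\ref{product law 2} (valid since $p\le q$ and $\frac1p-\frac1q\le\frac13$), which yields $d_j 2^{j(1-\frac3p)}\mu\|a\|_{\widetilde L^\infty_t(\dot B_{q,1}^{3/q})}\|u^h\|_{L^1_t(\dot B_{p,1}^{1+3/p})}$, matching the first line of $A(a,u)$. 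The pressure terms $\nabla_h\Pi$ and $a\nabla_h\Pi$ are controlled by Proposition~\ref{pressure}, which already bounds $\|\nabla\Pi\|_{L^1_t(\dot B_{p,1}^{-1+3/p})}\le C A(a,u)$, together with Lemma~\ref{product law 2} for the factor $a$.

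After multiplying the block estimate by $2^{j(-1+\frac3p)}$ and summing over $j\in\mathbb Z$ to reconstruct the $\dot B_{p,1}^{-1+3/p}$ and $\dot B_{p,1}^{1+3/p}$ norms, all nonlinear contributions are dominated by $C A(a,u)$, except for the single term coming from $(1+a)\nabla_h\Pi$ and from $a\Delta u^h$ that carries a prefactor $\|a\|_{\widetilde L^\infty_t(\dot B_{q,1}^{3/q})}$ multiplying $\|u^h\|_{L^1_t(\dot B_{p,1}^{1+3/p})}$; the smallness hypothesis \eqref{condition 0}, namely $C\|a\|_{\widetilde L^\infty_T(\dot B_{q,1}^{3/q})}\le\frac12$, lets me absorb this term into the left-hand viscous term $\bar c\mu\|u^h\|_{L^1_t(\dot B_{p,1}^{1+3/p})}$, yielding \eqref{horizontal}. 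I expect the main obstacle to be the bookkeeping in the transport/commutator estimate: one must route each Bony piece through the anisotropic version of Lemma~\ref{u^3 (1)} with the correct choice of Lebesgue exponents $(m,r)$ so that every resulting term lands in one of the five groups defining $A(a,u)$, and in particular so that the vertical velocity $u^3$ always appears with a subunit power, reflecting its allowed largeness.
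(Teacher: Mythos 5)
Your proposal is correct and, in its core, follows the same route as the paper: the dyadic $L^p$ energy estimate with the lower bound $\bar c\,2^{2j}\|\Delta_j u^h\|_{L^p}^p$ from Lemma A.5 of \cite{RD01}, Lemma \ref{product law 2} for the terms carrying $a$, Proposition \ref{pressure} for the pressure, and summation over $j$ against the weight $2^{j(-1+\frac{3}{p})}$. The one genuine difference is your treatment of the convection term: you propose the transport-style machinery of Proposition \ref{trans} (Bony decomposition $T_u\nabla u^h+\mathcal R(u,\nabla u^h)$ plus commutators), whereas the paper uses $\mathrm{div}\,u=0$ to write $u\cdot\nabla u^h=\mathrm{div}_h(u^h\otimes u^h)+\partial_3(u^3u^h)$, bounds the first piece by Lemma \ref{product law 2} and the second by the already-proved Lemma \ref{pressure1}. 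Your commutator route does work --- the diagonal term $\bigl(S_{j-1}u\cdot\nabla\Delta_j u^h\mid|\Delta_j u^h|^{p-2}\Delta_j u^h\bigr)$ vanishes by divergence-freeness, and the remainder pieces go through the same anisotropic estimates with Lemma \ref{u^3 (1)} --- but it is heavier than necessary: the commutator cancellation is needed in the density equation, where there is no smoothing, while here the parabolic term lets one pay the derivative $2^j$ and simply reuse Lemma \ref{pressure1}; your route essentially reproves that lemma by hand.

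Two smaller corrections. First, the term $\|u^3\|_{\widetilde L^2_t(\dot B^{3/p}_{p,1})}^{1-\alpha}\|u^h\|_{\widetilde L^2_t(\dot B^{3/p}_{p,1})}^{1+\alpha}$ in $A(a,u)$ does not arise from the convection term of the $u^h$ equation (that term produces only the $\frac1p$-power combinations of Lemma \ref{pressure1}); it enters solely through the pressure, via Lemma \ref{pressure2} applied to $u^3\mathrm{div}_hu^h$ in (\ref{g}). Second, no absorption into the left-hand viscous term is required in this proposition: the contribution $\mu\|a\|_{\widetilde L^\infty_t(\dot B^{3/q}_{q,1})}\|u^h\|_{L^1_t(\dot B^{1+3/p}_{p,1})}$ of $a\Delta u^h$ is itself the first term of $A(a,u)$, and (\ref{condition 0}) is used only to make Proposition \ref{pressure} applicable and to bound $1+\|a\|_{\widetilde L^\infty_t(\dot B^{3/q}_{q,1})}$ by a constant. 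Your absorption argument is also fine, but it yields (\ref{horizontal}) with a smaller constant in place of $\bar c$; the paper's bookkeeping avoids even that.
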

\begin{proof} According to the second equation of
(\ref{equation}), we have
\begin{equation*}
\ \partial_tu^h+u\cdot\nabla u^h+(1+a)(\nabla_h\Pi-\mu\Delta u^h)=0.
\end{equation*}
Applying the operator $\Delta_j$ to the above equation and taking
the $L^2$ inner product of the resulting equation with $|\Delta_j
u^h|^{p-2}\Delta_j u^h$ (when $p\in(1,2)$, we need to make some
modification as Proposition 2.1 in \cite{RD01}), we obtain
\begin{equation}\label{aa}
\begin{split}
\frac{1}{p}\frac{d}{dt}\|\Delta_j
u^h\|_{L^p}^p-\mu\int\limits_{\mathbb{R}^3}\Delta\Delta_j
u^h|\Delta_j u^h|^{p-2}\Delta_j u^h\,\mathrm{d}x
=-\int\limits_{\mathbb{R}^3}\{\Delta_j (u\cdot\nabla
u^h)\\
+\Delta_j((1+a)\nabla_h \Pi) -\mu\Delta_j(a\Delta u^h)\}|\Delta_j
u^h|^{p-2}\Delta_j u^h\,\mathrm{d}x.
\end{split}
\end{equation}
However thanks to  Lemma A.5 of Appendix in \cite{RD01}, there
exists a positive constant $\bar{c}$ so that
\begin{equation*}
-\int\limits_{\mathbb{R}^3}\Delta\Delta_j u^h|\Delta_j
u^h|^{p-2}\Delta_j u^h\,\mathrm{d}x\geq \bar{c} 2^{2j}\|\Delta_j
u^h\|_{L^p}^{p} ,
\end{equation*}
so we get from (\ref{aa}) that
\begin{equation}\label{h3}
\begin{split}
& \frac{d}{dt}\|\Delta_j u^h\|_{L^p}+\bar{c}\mu2^{2j}\|\Delta_j
u^h\|_{L^p}\\
&\ \ \leq\|\Delta_j(u\cdot\nabla u^h)\|_{L^p}
+\|\Delta_j((1+a)\nabla_h\Pi)\|_{L^p}+\mu\|\Delta_j(a\Delta
u^h)\|_{L^p},
\end{split}
\end{equation}
and integrating in time, we get
\begin{equation}
\begin{split}\label{h4}
\|\Delta_j u^h\|_{L^\infty_t(L^p)}+\bar{c}\mu2^{2j}\|\Delta_j
u^h\|_{L_t^1(L^p)} &\leq\|\Delta_j
u^h_0\|_{L^p}+\|\Delta_j(u\cdot\nabla u^h)\|_{L_t^1(L^p)}\\
 &\ \
+\|\Delta_j((1+a)\nabla_h\Pi)\|_{L_t^1(L^p)} +\mu\|\Delta_j(a\Delta
u^h)\|_{L_t^1(L^p)}.
\end{split}
\end{equation}
Applying Lemma \ref{product law 2} and Lemma \ref{pressure1}, we
obtain
\begin{equation*}
\begin{split}
\|\Delta_j(u\cdot\nabla u^h)\|_{L_t^1(L^p)}&
\leq\|\Delta_j\mathrm{div}_h(u^h\otimes
u^h)\|_{L_t^1(L^p)}+\|\Delta_j(\partial_3(u^3u^h))\|_{L_t^1(L^p)}\\
&\lesssim2^j(\|\Delta_j(u^h\otimes
u^h)\|_{L_t^1(L^p)}+\|\Delta_j(u^3u^h)\|_{L_t^1(L^p)})\\
&\lesssim d_j2^{j(1-\frac{3}{p})}(\|
u^h\|_{\widetilde{L}_t^\infty(\dot{B}_{p,1}^{-1+\frac{3}{p}})} \|
u^h\|_{L_t^1(\dot{B}_{p,1}^{1+\frac{3}{p}})}+\|
u^3\|_{\widetilde{L}_t^2(\dot{B}_{p,1}^{\frac{3}{p}})}^{1-\frac{1}{p}}\|
u^h\|_{\widetilde{L}_t^2(\dot{B}_{p,1}^{\frac{3}{p}})}^{1+\frac{1}{p}}\\
&\ \ +\| u^h \|_{\widetilde{L}_t^\infty(\dot{B}_{p,1}^{-1+\frac{3}{p}})}\|
u^3\|_{L_t^1(\dot{B}_{p,1}^{1+\frac{3}{p}})}^{1-\frac{1}{p}}\|
u^h\|_{L_t^1(\dot{B}_{p,1}^{1+\frac{3}{p}})}^{\frac{1}{p}}).
\end{split}
\end{equation*}
While applying Lemma \ref{product law 2} and Proposition
\ref{pressure},  under the assumption (\ref{condition 0}), we arrive at
\begin{equation*}
\begin{split}
\|\Delta_j((1+a)\nabla_h\Pi)\|_{L_t^1(L^p)}&\lesssim
d_j2^{j(1-\frac{3}{p})}(1+\| a\|_{
\widetilde{L}_t^\infty(\dot{B}_{q,1}^{\frac{3}{q}})})\|\nabla_h\Pi\|_
{L_t^1(\dot{B}_{p,1}^{-1+\frac{3}{p}})}\\
&\lesssim d_j2^{j(1-\frac{3}{p})}A(a,u)
\end{split}
\end{equation*}
and
\begin{equation*}
\|\Delta_j(a\Delta u^h)\|_{L_t^1(L^p)}\lesssim
d_j2^{j(1-\frac{3}{p})} \|
a\|_{\widetilde{L}_t^\infty(\dot{B}_{p,1}^{\frac{3}{p}})}\|
u^h\|_{L_t^1(\dot{B}_{p,1}^{1+\frac{3}{p}})}.
\end{equation*}
Substituting the above estimates into (\ref{h4}) and the condition
(\ref{condition 0}), we deduce the proof of (\ref{horizontal}).
\end{proof}

\begin{prop}\label{u^3 estimate}
Under the assumption of Proposition \ref{pressure}, we have
\begin{equation}\label{vertical}
\| u^3\|_{\widetilde{L}_t^\infty(\dot{B}_{p,1}^{-1+\frac{3}{p}})}
+\bar{c}\mu\| u^3\|_{L_t^1(\dot{B}_{p,1}^{1+\frac{3}{p}})}
\leq\|
u_0^3\|_{\dot{B}_{p,1}^{-1+\frac{3}{p}}}+C\{\|u^3\|_{\widetilde{L}^2_t(\dot{B}^{\frac{3}{p}}_{p,1})}\|u^h\|_{\widetilde{L}^2_t(\dot{B}^{\frac{3}{p}}_{p,1})}+A(a,u)\}.
\end{equation}
\end{prop}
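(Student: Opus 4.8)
The plan is to mirror the proof of Proposition \ref{u^h estimate}, now applied to the third component. From the second equation of (\ref{equation}), the vertical component satisfies
\begin{equation*}
\partial_t u^3+u\cdot\nabla u^3+(1+a)(\partial_3\Pi-\mu\Delta u^3)=0.
\end{equation*}
First I would apply $\Delta_j$, take the $L^2$ inner product of the result with $|\Delta_j u^3|^{p-2}\Delta_j u^3$ (with the usual modification for $p\in(1,2)$ as in Proposition 2.1 of \cite{RD01}), and invoke Lemma A.5 of \cite{RD01} to bound the viscous contribution from below by $\bar c\,2^{2j}\|\Delta_j u^3\|_{L^p}^p$. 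Integrating in time yields, for each $j$,
\begin{equation*}
\|\Delta_j u^3\|_{L^\infty_t(L^p)}+\bar c\mu 2^{2j}\|\Delta_j u^3\|_{L^1_t(L^p)}\le\|\Delta_j u^3_0\|_{L^p}+\|\Delta_j(u\cdot\nabla u^3)\|_{L^1_t(L^p)}+\|\Delta_j((1+a)\partial_3\Pi)\|_{L^1_t(L^p)}+\mu\|\Delta_j(a\Delta u^3)\|_{L^1_t(L^p)}.
\end{equation*}
After multiplying by $2^{j(-1+\frac3p)}$ and summing over $j\in\mathbb{Z}$, the left-hand side reproduces the two norms of (\ref{vertical}), so it remains to control each forcing term by $\|u_0^3\|_{\dot B^{-1+\frac3p}_{p,1}}$, the balanced quantity $\|u^3\|_{\widetilde L^2_t(\dot B^{\frac3p}_{p,1})}\|u^h\|_{\widetilde L^2_t(\dot B^{\frac3p}_{p,1})}$, and $A(a,u)$.

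The decisive step is the convection term. Using $\mathrm{div}\,u=0$, that is $\partial_3 u^3=-\mathrm{div}_h u^h$, I would split
\begin{equation*}
u\cdot\nabla u^3=u^h\cdot\nabla_h u^3+u^3\partial_3 u^3=u^h\cdot\nabla_h u^3-u^3\mathrm{div}_h u^h.
\end{equation*}
This rewriting is essential: it prevents $u^3$, which is allowed to be large, from entering quadratically. For the first piece, the Chemin--Lerner analogue of Lemma \ref{product law} (obtained from Bony's decomposition and H\"older in time), with $u^h\in\widetilde L^2_t(\dot B^{\frac3p}_{p,1})$, $\nabla_h u^3\in\widetilde L^2_t(\dot B^{-1+\frac3p}_{p,1})$ and the admissibility $\frac3p+(-1+\frac3p)>3\max(0,\frac2p-1)$, which holds for $p<6$, gives
\begin{equation*}
\|u^h\cdot\nabla_h u^3\|_{\widetilde L^1_t(\dot B^{-1+\frac3p}_{p,1})}\lesssim\|u^h\|_{\widetilde L^2_t(\dot B^{\frac3p}_{p,1})}\|u^3\|_{\widetilde L^2_t(\dot B^{\frac3p}_{p,1})},
\end{equation*}
which is exactly the balanced term of (\ref{vertical}); it is harmless at the closing stage since $\|u^h\|_{\widetilde L^2_t}$ is small while $u^3$ appears only linearly. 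For the second piece $u^3\mathrm{div}_h u^h$, Lemma \ref{pressure2} directly produces, after the weighted summation, $\|u^3\|^{1-\frac1p}_{\widetilde L^2_t}\|u^h\|^{1+\frac1p}_{\widetilde L^2_t}+\|u^3\|^{1-\alpha}_{\widetilde L^2_t}\|u^h\|^{1+\alpha}_{\widetilde L^2_t}$, both of which are summands of $A(a,u)$.

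The two remaining terms are routine. I would write $(1+a)\partial_3\Pi=\partial_3\Pi+a\partial_3\Pi$: the first is controlled by $\|\nabla\Pi\|_{L^1_t(\dot B^{-1+\frac3p}_{p,1})}\le CA(a,u)$ from Proposition \ref{pressure}, and the second by Lemma \ref{product law 2}, namely $\|a\partial_3\Pi\|_{L^1_t(\dot B^{-1+\frac3p}_{p,1})}\lesssim\|a\|_{\widetilde L^\infty_t(\dot B^{\frac3q}_{q,1})}\|\nabla\Pi\|_{L^1_t(\dot B^{-1+\frac3p}_{p,1})}$, which under (\ref{condition 0}) is again $\lesssim A(a,u)$. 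Finally Lemma \ref{product law 2} gives $\mu\|a\Delta u^3\|_{L^1_t(\dot B^{-1+\frac3p}_{p,1})}\lesssim\mu\|a\|_{\widetilde L^\infty_t(\dot B^{\frac3q}_{q,1})}\|u^3\|_{L^1_t(\dot B^{1+\frac3p}_{p,1})}$, a term of $A(a,u)$. Collecting all the bounds into the summed energy inequality yields (\ref{vertical}).

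The main obstacle is precisely the convection term: one must exploit incompressibility to turn $u^3\partial_3 u^3$ into $-u^3\mathrm{div}_h u^h$, so that the dangerous vertical self-interaction becomes a product of the large factor $u^3$ with the small factor $\mathrm{div}_h u^h$, estimable through the anisotropic Lemma \ref{pressure2}; keeping it as $\partial_3((u^3)^2)$ would force a quadratic $\|u^3\|^2$-type bound incompatible with allowing $u^3$ to be large. The $p$-dependent choice of $\alpha$ in (\ref{definition alpha}) enters only via Lemma \ref{pressure2}, through the remainder $R(u^3,\mathrm{div}_h u^h)$ when $5\le p<6$.
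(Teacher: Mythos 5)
Your proposal is correct and follows essentially the same route as the paper: the same $L^p$ energy estimate on $\Delta_j u^3$ (via Lemma A.5 of \cite{RD01}), the same use of $\mathrm{div}\,u=0$ to convert the dangerous self-interaction $u^3\partial_3u^3$ into $-u^3\mathrm{div}_hu^h$, and the same treatment of $(1+a)\partial_3\Pi$ and $a\Delta u^3$ through Proposition \ref{pressure}, condition (\ref{condition 0}) and Lemma \ref{product law 2}. The only cosmetic difference is in the term $u^3\mathrm{div}_hu^h$: the paper bounds it by the plain product law, absorbing it into the balanced quantity $\|u^3\|_{\widetilde L^2_t(\dot B^{\frac3p}_{p,1})}\|u^h\|_{\widetilde L^2_t(\dot B^{\frac3p}_{p,1})}$, whereas you route it through the anisotropic Lemma \ref{pressure2} into summands of $A(a,u)$ — both are admissible since both quantities appear on the right-hand side of (\ref{vertical}).
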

\begin{proof} According to the second equation of
(\ref{equation}), we have
\begin{equation*}
\partial_tu^3+u\cdot\nabla u^3+(1+a)(\partial_3\Pi-\mu\Delta u^3)=0,
\end{equation*}
we get by a similar derivation of (\ref{h3}) that
\begin{equation}
\begin{split}\label{h5}
\|\Delta_ju^3(t)\|_{L^p}+\bar{c}\mu2^{2j}\|\Delta_j
u^3\|_{L_t^1(L^p)}& \leq\|\Delta_ju_0^3\|_{L^p}+\|
\Delta_j(u\cdot\nabla
u^3)\|_{L_t^1(L^p)}\\
&\ \ +\|\Delta_j((1+a)\partial_3\Pi)\|_{L_t^1(L^p)}+
\mu\|\Delta_j(a\Delta u^3)\|_{L_t^1(L^p)}.
\end{split}
\end{equation}
By using Lemma \ref{bernstein} and Lemma \ref{product law 2}, we
deduce that
\begin{equation*}
\begin{split}
\|\Delta_j(u\cdot\nabla
u^3)\|_{L_t^1(L^p)}&\lesssim2^j\|\Delta_j(u^hu^3)\|_{L_t^1(L^p)}+\|\Delta_j(u^3
\mathrm{div}_hu^h)\|_{L_t^1(L^p)}\\
&\lesssim
d_j2^{j(1-\frac{3}{p})}\|u^3\|_{\widetilde{L}^2_t(\dot{B}^{\frac{3}{p}}_{p,1})}
\|u^h\|_{\widetilde{L}^2_t(\dot{B}^{\frac{3}{p}}_{p,1})}.
\end{split}
\end{equation*}
We get by using Proposition \ref{pressure} and (\ref{condition 0})
that
\begin{equation*}
\|\Delta_j((1+a)\partial_3\Pi)\|_{L_t^1(L^p)}
\lesssim d_j2^{j(1-\frac{3}{p})}A(a,u)
\end{equation*} and
\begin{equation*}
\|\Delta_j(a\Delta u^3)\|_{L_t^1(L^p)}\lesssim
 d_j2^{j(1-\frac{3}{p})}\|
a\|_{\widetilde{L}_t^\infty(\dot{B}_{q,1}^{\frac{3}{q}})}\|
u^3\|_{L_t^1(\dot{B}_{p,1}^{1+\frac{3}{p}})}.
\end{equation*}
Then we obtain (\ref{vertical}) by substituting the above estimates
into (\ref{h5}).
\end{proof}

\noindent \textbf{The proof of Theorem \ref{main} for $p\leq q$}: Indeed given $a_0 \in
\dot{B}_{q,1}^{\frac{3}{q}}(\mathbb{R}^3)$ and $u_0 \in
\dot{B}_{p,1}^{-1+\frac{3}{p}}(\mathbb{R}^3)$ with
$\|a_0\|_{\dot{B}_{q,1}^{\frac{3}{q}}}$ sufficiently small and $p, q$
satisfying the conditions listed in Theorem \ref{main} for $p\leq
q$, Theorem \ref{abidi theorem}
ensures that there exists a positive time $T$ so that the system
(\ref{equation}) has a unique solution $(a,u,\nabla\Pi)\in E_{p,q,T}$.
We denote $T^*$ to be the largest existence time. Hence to prove Theorem \ref{main} for $p\leq q$, we only
need to prove that $T^*=\infty$.

 Now let $\eta$ be a small
enough positive constant, which will
 be determined later on. We define $\mathcal {T}$ by
\begin{equation}
\begin{split}\label{continuity assumption}
\mathcal {T}\stackrel{def}{=}\sup\{t\in [0,T^*):
\|a\|_{\widetilde{L}_t^{\infty}(\dot{B}_{q,1}^{\frac{3}{q}})}\leq
4\|a_0\|_{\dot{B}_{q,1}^{\frac{3}{q}}},\\
\| u^3\|_{\widetilde{L}_t^\infty(\dot{B}_{p,1}^{-1+\frac{3}{p}})}
+\bar{c}\mu\| u^3\|_{L_t^1(\dot{B}_{p,1}^{1+\frac{3}{p}})}\leq
4\|u_0^3\|_{\dot{B}_{p,1}^{-1+\frac{3}{p}}}+\mu,\\
\| u^h\|_{\widetilde{L}_t^\infty(\dot{B}_{p,1}^{-1+\frac{3}{p}})}
+\bar{c}\mu\| u^h\|_{L_t^1(\dot{B}_{p,1}^{1+\frac{3}{p}})}\leq
4\|u_0^h\|_{\dot{B}_{p,1}^{-1+\frac{3}{p}}}+\eta\mu\}.
\end{split}
\end{equation}
In what follows, we shall prove that $\mathcal {T}=T^*$ under the
assumptions of (\ref{data condition0})-(\ref{data condition1}).

If not, we assume that
$$\mathcal {T}<T^*.$$
And for $t\leq \mathcal {T}$,
we deduce from (\ref{density}) that
\begin{equation*}
\begin{split}
\|a\|_{\widetilde{L}_t^{\infty}(\dot{B}_{q,1}^{\frac{3}{q}})}&\leq
\|a_0\|_{\dot{B}_{q,1}^{\frac{3}{q}}}+\frac{C}{\bar{c}}
\|a\|_{\widetilde{L}_t^{\infty}(\dot{B}_{q,1}^{\frac{3}{q}})}\{\frac{4}{\mu}
\|u_0^h\|_{\dot{B}_{p,1}^{-1+\frac{3}{p}}}+\eta\\
&\ \
+(\frac{4}{\mu}\|u_0^3\|_{\dot{B}_{p,1}^{-1+\frac{3}{p}}}+1)^{1-\frac{1}{p}}
(\frac{4}{\mu}\|u_0^h\|_{\dot{B}_{p,1}^{-1+\frac{3}{p}}}+\eta)^{\frac{1}{p}}\}.
\end{split}
\end{equation*}
By taking
\begin{equation}\label{condition 1}
\frac{C}{\bar{c}}\{\frac{4}{\mu}
\|u_0^h\|_{\dot{B}_{p,1}^{-1+\frac{3}{p}}}+\eta+(\frac{4}{\mu}\|u_0^3\|_{\dot{B}_{p,1}^{-1+\frac{3}{p}}}+1)^{1-\frac{1}{p}}
(\frac{4}{\mu}\|u_0^h\|_{\dot{B}_{p,1}^{-1+\frac{3}{p}}}+\eta)^{\frac{1}{p}}\}<\frac{1}{2},
\end{equation}
we obtain
\begin{equation}\label{result 1}
\|a\|_{\widetilde{L}_t^{\infty}(\dot{B}_{q,1}^{\frac{3}{q}})}\leq
2\|a_0\|_{\dot{B}_{q,1}^{\frac{3}{q}}}
\end{equation}
for $t\leq\mathcal {T}$. Thanks to (\ref{vertical}), on the one
hand, we obtain
\begin{equation*}
\begin{split}
&\| u^3\|_{\widetilde{L}_t^{\infty}(\dot{B}_{p,1}^{-1+\frac{3}{p}})}
+\bar{c}\mu\|u^3\|_{L_t^1(\dot{B}_{p,1}^{1+\frac{3}{p}})}\\
&\leq\|
u_0^3\|_{\dot{B}_{p,1}^{-1+\frac{3}{p}}}+\frac{C}{\bar{c}\mu}(\|
u^3\|_{\widetilde{L}_t^{\infty}(\dot{B}_{p,1}^{-1+\frac{3}{p}})}+\bar{c}\mu\|u^3\|_{L_t^1(\dot{B}_{p,1}^{1+\frac{3}{p}})})
\\
&\ \
\times(4\|u_0^h\|_{\dot{B}_{p,1}^{-1+\frac{3}{p}}}+\eta\mu)
+C\|a_0\|_{\dot{B}_{q,1}^{\frac{3}{q}}}(4\|u_0^h\|_{\dot{B}_{p,1}^{-1+\frac{3}{p}}}+\eta\mu)
+C\mu\|a_0\|_{\dot{B}_{q,1}^{\frac{3}{q}}}\|u^3\|_{L_t^1(\dot{B}_{p,1}^{1+\frac{3}{p}})}
\\
&\ \
+\frac{C}{\bar{c}\mu}(\|u_0^h\|_{\dot{B}_{p,1}^{-1+\frac{3}{p}}}+\eta\mu)^{1+\frac{1}{p}}
(\|u_0^3\|_{\dot{B}_{p,1}^{-1+\frac{3}{p}}}+\mu)^{1-\frac{1}{p}}+\frac{C}{\bar{c}\mu}
(\|u_0^h\|_{\dot{B}_{p,1}^{-1+\frac{3}{p}}}+\eta\mu)^2
\\
&\ \
+\frac{C}{\bar{c}\mu}(\|u_0^h\|_{\dot{B}_{p,1}^{-1+\frac{3}{p}}}+\eta\mu)^{1+\alpha}
(\|u_0^3\|_{\dot{B}_{p,1}^{-1+\frac{3}{p}}}+\mu)^{1-\alpha}
\end{split}
\end{equation*}
for $t\leq\mathcal {T}$, and taking
\begin{equation}\label{condition 2}
\begin{split}
&\frac{C}{\bar{c}\mu}\{(\|u_0^h\|_{\dot{B}_{p,1}^{-1+\frac{3}{p}}}+\eta\mu)^{\frac{1}{p}}
(\|u_0^3\|_{\dot{B}_{p,1}^{-1+\frac{3}{p}}}+\mu)^{1-\frac{1}{p}}+(\|u_0^h\|_{\dot{B}_{p,1}^{-1+\frac{3}{p}}}+\eta\mu)^{\alpha}
\\
&\ \
\times(\|u_0^3\|_{\dot{B}_{p,1}^{-1+\frac{3}{p}}}+\mu)^{1-\alpha}+(4\|u_0^h\|_{\dot{B}_{p,1}^{-1+\frac{3}{p}}}+\eta\mu)\}\leq\frac{1}{8}
\end{split}
\end{equation}
and combing with
\begin{equation}\label{condition5}
C\|a_0\|_{\dot{B}_{q,1}^{\frac{3}{q}}}\leq\frac{1}{8}\bar{c},
\end{equation}
we obtain
$$
\| u^3\|_{\widetilde{L}_t^{\infty}(\dot{B}_{p,1}^{-1+\frac{3}{p}})}
+\bar{c}\mu\|u^3\|_{L_t^1(\dot{B}_{p,1}^{1+\frac{3}{p}})}\leq2\|
u_0^3\|_{\dot{B}_{p,1}^{-1+\frac{3}{p}}}+2(\|u_0^h\|_{\dot{B}_{p,1}^{-1+\frac{3}{p}}}+\eta\mu).
$$
While taking
\begin{equation}\label{condition 3}
\|u_0^h\|_{\dot{B}_{p,1}^{-1+\frac{3}{p}}}+\eta\mu\leq\frac{\mu}{4},
\end{equation}
we get that
\begin{equation}\label{result 2}
 \|
u^3\|_{\widetilde{L}_t^{\infty}(\dot{B}_{p,1}^{-1+\frac{3}{p}})}
+\bar{c}\mu\|u^3\|_{L_t^1(\dot{B}_{p,1}^{1+\frac{3}{p}})}\leq2\|
u_0^3\|_{\dot{B}_{p,1}^{-1+\frac{3}{p}}}+\frac{\mu}{2}.
\end{equation}
On the other hand, for $t\leq\mathcal {T}$, we can deduce from
(\ref{horizontal}) that
\begin{equation*}
\begin{split}
&\|
u^h\|_{\widetilde{L}_t^\infty(\dot{B}_{p,1}^{-1+\frac{3}{p}})}+\bar{c}\mu\|
u^h\|_{L_t^1(\dot{B}_{p,1}^{1+\frac{3}{p}})}\leq
\|u_0^h\|_{\dot{B}_{p,1}^{-1+\frac{3}{p}}}
+C(\|u_0^h\|_{\dot{B}_{p,1}^{-1+\frac{3}{p}}}+\eta\mu)\|
u^h\|_{L_t^1(\dot{B}_{p,1}^{1+\frac{3}{p}})}\\
&\ \ +\frac{C}{\bar{c}\mu}\|
u^h\|_{\widetilde{L}_t^\infty(\dot{B}_{p,1}^{-1+\frac{3}{p}})}(\|u_0^h\|_{\dot{B}_{p,1}^{-1+\frac{3}{p}}}+\eta\mu)^{\frac{1}{p}}
(\|u_0^3\|_{\dot{B}_{p,1}^{-1+\frac{3}{p}}}+\mu)^{1-\frac{1}{p}}\\
&\ \ +C\mu\|a_0\|_{\dot{B}_{q,1}^{\frac{3}{q}}} \|
u^h\|_{L_t^1(\dot{B}_{p,1}^{1+\frac{3}{p}})}
+\frac{C}{\bar{c}}\|a_0\|_{\dot{B}_{q,1}^{\frac{3}{q}}}(\|u_0^3\|_{\dot{B}_{p,1}^{-1+\frac{3}{p}}}+\mu)
\\
&\ \ +\frac{C}{\bar{c}\mu}\|
u^h\|_{\widetilde{L}_t^\infty(\dot{B}_{p,1}^{-1+\frac{3}{p}})}(\|u_0^h\|_{\dot{B}_{p,1}^{-1+\frac{3}{p}}}+\eta\mu)^{\alpha}
(\|u_0^3\|_{\dot{B}_{p,1}^{-1+\frac{3}{p}}}+\mu)^{1-\alpha}.
\end{split}
\end{equation*}
By taking the conditions (\ref{condition 2})-(\ref{condition5}),
chossing
$\eta=\frac{8C}{\bar{c}}\|a_0\|_{\dot{B}_{q,1}^{\frac{3}{q}}}
(\|u_0^3\|_{\dot{B}_{p,1}^{-1+\frac{3}{p}}}/{\mu}+1)$,
 we obtain
\begin{equation}\label{result 3}
\|
u^h\|_{\widetilde{L}_t^\infty(\dot{B}_{p,1}^{-1+\frac{3}{p}})}+\bar{c}\mu\|
u^h\|_{L_t^1(\dot{B}_{p,1}^{1+\frac{3}{p}})}\leq2\|u_0^h\|_{\dot{B}_{p,1}^{-1+\frac{3}{p}}}+\frac{\eta\mu}{4}
\end{equation}
for $t\leq\mathcal {T}$. Combining (\ref{condition 0}),
(\ref{condition 1}), (\ref{condition 2}), (\ref{condition5}) and
(\ref{condition 3}), we can reach
(\ref{result 1}), (\ref{result 2}) and (\ref{result 3}) if we take
$C$ large enough in (\ref{data condition0})-(\ref{data condition1}). And this contradicts
with the definition (\ref{continuity assumption}), thus we conclude
that $\mathcal {T}=T^*$. Then we complete the proof of Theorem
 \ref{main} for $p\leq q$ by standard continuation argument. {\hfill $\square$\medskip}

\section{The proof of Theorem \ref{main} for $p>q$} \label{p>q}
 For $1<q<
p<6$ with $\frac{1}{q}-\frac{1}{p}\leq\frac{1}{3}$, $a_0\in
\dot{B}_{q,1}^{\frac{3}{q}}(\mathbb{R}^3)$, $u_0\in
\dot{B}_{p,1}^{-1+\frac{3}{p}}(\mathbb{R}^3)$, by using the embedding of
Besov spaces , we get that $a_0\in
\dot{B}_{p,1}^{\frac{3}{p}}(\mathbb{R}^3)$. So by the proof of Theorem
\ref{main} for $p=q$ in Section \ref{section3}, there exists a unique global solution
$(a,u\,\nabla\Pi)\in E_{p,p}$.
Then, by using the same method of Proposition \ref{trans}, we
imply $a\in
\widetilde{L}^{\infty}(\mathbb{R}^+;\dot{B}_{q,1}^{\frac{3}{q}}(\mathbb{R}^3))$
in the following.

 By applying Bony's decomposition (\ref{bony}), we
get
\begin{equation}\label{r3}
\begin{split}
\|\Delta_ja(t)\|_{L^q} &\leq\|\Delta_ja_0\|_{L^q}
+C\int_0^t\{\sum\limits_{|j'-j|\leq5}(\|[\Delta_j;S_{j'-1}u]\Delta_{j'}\nabla
a\|_{L^q}\\
&\ \ +\|(S_{j'-1}u-S_{j-1}u)\Delta_j\Delta_{j'}\nabla a\|_{L^q})
+\|T_{\nabla a}u\|_{L^q}+\|R(u,\nabla a )\|_{L^q}\}dt'.
\end{split}
\end{equation}
We get by applying Lemma \ref{bernstein} and the classical estimate
on commutators that
\begin{equation*}
 \sum\limits_{|j'-j|\leq5}\|[\Delta_j;S_{j'-1}u]\Delta_{j'}\nabla
a\|_{L^q}\lesssim\sum\limits_{|j'-j|\leq5}\|S_{j'-1}\nabla
u\|_{L^{\infty}}\|\Delta_{j'}a\|_{L^q}\lesssim
d_j2^{-j\frac{3}{q}}\|a\|_{\dot{B}_{q,1}^{\frac{3}{q}}}\|u\|_{\dot{B}_{p,1}^{1+\frac{3}{p}}}.
\end{equation*}
Similarly, we obtain
\begin{equation*}
\begin{split}
\sum\limits_{|j'-j|\leq5}\|(S_{j'-1}u-S_{j-1}u)\Delta_j\Delta_{j'}\nabla
a\|_{L^q}&\lesssim\sum\limits_{|j'-j|\leq5}\|S_{j'-1}\nabla
u-S_{j-1}\nabla u\|_{L^{\infty}}\|\Delta_{j'}a\|_{L^q}\\
&\lesssim
d_j2^{-j\frac{3}{q}}\|a\|_{\dot{B}_{q,1}^{\frac{3}{q}}}\|u\|_{\dot{B}_{p,1}^{1+\frac{3}{p}}}.
\end{split}
\end{equation*}
Because of $q< p$ and $\frac{1}{q}-\frac{1}{p}\leq\frac{1}{3}$,
we obtain
\begin{equation*}
\begin{split}
\|T_{\nabla
a}u\|_{L^q}&\lesssim\sum\limits_{|j'-j|\leq5}\|S_{j'-1}\nabla
a\|_{L^{\frac{pq}{p-q}}}\|\Delta_{j'}u\|_{L^p}\\
&\lesssim\sum\limits_{|j'-j|\leq5}\sum\limits_{j''\leq
j'-2}2^{j''(1+\frac{3}{p})}\|\Delta_{j''}a\|_{L^q}\|\Delta_{j'}u\|_{L^p}\\
&\lesssim
d_j2^{-j\frac{3}{q}}\|a\|_{\dot{B}_{q,1}^{\frac{3}{q}}}\|u\|_{\dot{B}_{p,1}^{1+\frac{3}{p}}}.
\end{split}
\end{equation*}
Finally, thanks to $\frac{1}{q}-\frac{1}{p}\leq \frac{1}{3}$, we
arrive at
\begin{equation*}
\begin{split}
\|R(u,\nabla a)\|_{L^q}&\lesssim\sum\limits_{j'\geq
j-N_0}\|\Delta_{j'}u\|_{L^p}\|\widetilde{\Delta}_{j'}\nabla
a\|_{L^{\frac{pq}{p-q}}}\\
&\lesssim\sum\limits_{j'\geq
j-N_0}2^{j'(1+\frac{3}{p})}\|\Delta_{j'}u\|_{L^p}\|\widetilde{\Delta}_{j'}a\|_{L^q}\\
&\lesssim
d_j2^{-j\frac{3}{q}}\|a\|_{\dot{B}_{q,1}^{\frac{3}{q}}}\|u\|_{\dot{B}_{p,1}^{1+\frac{3}{p}}}.
\end{split}
\end{equation*}
Substituting the above estimates into (\ref{r3}), we can deduce that
\begin{equation*}
 \|a\|_{\widetilde{L}^{\infty}_t(\dot{B}_{q,1}^{\frac{3}{q}})}\leq\|a_0\|_{\dot{B}_{q,1}^{\frac{3}{q}}}+C\int_0^t\|a(t')\|_{\dot{B}_{q,1}^{\frac{3}{q}}}
\|u(t')\|_{\dot{B}_{p,1}^{1+\frac{3}{p}}}dt'.
\end{equation*}
Applying Gronwall's inequality, we get that
\begin{equation*}
\|a(t)\|_{\widetilde{L}^{\infty}_t(\dot{B}_{q,1}^{\frac{3}{q}})}
\leq\|a_0\|_{\dot{B}_{q,1}^{\frac{3}{q}}}
\exp\{C\|u\|_{L^1([0,\infty);\dot{B}_{p,1}^{1+\frac{3}{p}})}\}
\end{equation*}
for any $t>0$. Therefore we obtain $a\in
\widetilde{L}^{\infty}(\mathbb{R}^+;\dot{B}_{q,1}^{\frac{3}{q}}(\mathbb{R}^3))$.
{\hfill $\square$\medskip}
$$
\textbf{Appendix}
$$

 In this section, we shall give the proof of Theorem \ref{main
theorem} briefly. At first, for the convenience of the readers, we
recall the definition of multiplier spaces to Besov spaces from
\cite{VG09} and some facts in \cite{RD11}:
\begin{defn}[see Chapter 4 in \cite{VG09}]\label{multiplier space definition}
We call $f$ belonging to the multiplier space,
$\mathscr{M}(\dot{B}_{p,1}^s(\mathbb{R}^n))$, of $\dot{B}_{p,1}^s(\mathbb{R}^n)$
if the distributions $f$ satisfies $\Psi f\in
\dot{B}_{p,1}^s(\mathbb{R}^n)$ whenever $\Psi \in
\dot{B}_{p,1}^s(\mathbb{R}^n)$. We endow this space with the norm
$$
\|f\|_{\mathscr{M}(\dot{B}_{p,1}^s)}\stackrel{def}{=}
\sup\limits_{\|\Psi\|_{\dot{B}_{p,1}^s}=1}\|\Psi f\|_{\dot{B}_{p,1}^s}\ \ for \
\ f\in \mathscr{M}(\dot{B}_{p,1}^s(\mathbb{R}^n)).
$$
\end{defn}

 The estimate of transport equation basically follows from Sect.2 of \cite{HPP12}.
Indeed as we shall not use Lagrange approach as that in \cite{DM12},
we need first to investigate the following transport equation:
\begin{equation}\label{tranport equation}
  \partial_ta+u\cdot\nabla a=0,\ \ a|_{t=0}=a_0,
\end{equation}
with the initial datum $a_0\in\mathcal{M}(\dot{B}_{p,1}^s(\mathbb{R}^n))$. We denoted $X_u(t,y)$ to be the flow map
determined by $u$, namely,
\begin{equation}\label{flow}
X_u(t,y)=y+\int_0^tu(\tau,X_u(\tau,y))d\tau.
\end{equation}
\begin{lem}[Lemma 3.1 in \cite{HPP12}]\label{flow estimate Lemma} Let $s\in (-1,1)$, $p\geq1$, $a\in
\dot{B}_{p,1}^s(\mathbb{R}^n)$, $u\in L^1((0,T);Lip(\mathbb{R}^n))$, and
$X_u$ the flow map determined by (\ref{flow}). Then $a\circ X_u\in
L^{\infty}((0,T);\dot{B}_{p,1}^s(\mathbb{R}^n))$, and there holds
\begin{equation}\label{flow estimate}
\|a\circ X_u\|_{L^{\infty}_t(\dot{B}_{p,1}^s)}\leq
C\|a\|_{\dot{B}_{p,1}^s}\exp\{C\int_0^t\|\nabla
u(\tau)\|_{L^{\infty}}d\tau\}.
\end{equation}
\end{lem}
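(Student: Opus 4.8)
The plan is to prove the operator statement that, for each fixed $t$, composition with the flow, $f\mapsto f\circ X_u(t,\cdot)$, is bounded on $\dot B^s_{p,1}(\mathbb R^n)$ with norm at most $C\exp\{C\int_0^t\|\nabla u(\tau)\|_{L^\infty}\,d\tau\}$; taking $f=a$ then yields \eqref{flow estimate}. First I would record the geometry of the flow. Set $V(t):=\int_0^t\|\nabla u(\tau)\|_{L^\infty}\,d\tau$. Differentiating \eqref{flow} in $y$ shows that $DX_u(t,y)$ solves the linear ODE $\partial_t DX_u=(Du)(t,X_u)\,DX_u$ with $DX_u(0,\cdot)=\mathrm{Id}$, so Gronwall's inequality gives $\|DX_u(t,\cdot)\|_{L^\infty}\le e^{V(t)}$ and, arguing likewise for the inverse flow, $\|(DX_u)^{-1}(t,\cdot)\|_{L^\infty}\le e^{V(t)}$. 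In particular $X:=X_u(t,\cdot)$ is a bi-Lipschitz homeomorphism with constants $e^{\pm V}$ whose Jacobian determinant is pinched, $e^{-nV}\le|\det DX|\le e^{nV}$; these are the only properties of the flow I will use, so in particular no divergence-free hypothesis is needed.

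The core of the argument is a frequency-localized (Littlewood--Paley) estimate. Writing $a=\sum_{k}\Delta_k a$ and using the triangle inequality,
\[
\|a\circ X\|_{\dot B^s_{p,1}}=\sum_{j}2^{js}\|\Delta_j(a\circ X)\|_{L^p}\le\sum_{j}2^{js}\sum_{k}\big\|\Delta_j\big((\Delta_k a)\circ X\big)\big\|_{L^p}.
\]
I would then establish the two off-diagonal bounds, with a constant depending only on $n$ and $p$,
\[
\big\|\Delta_j\big((\Delta_k a)\circ X\big)\big\|_{L^p}\lesssim e^{CV}\,2^{-|j-k|}\,\|\Delta_k a\|_{L^p}.
\]
Granting these, the double sum is a discrete convolution, and since $\sum_{m\in\mathbb Z}2^{ms}2^{-|m|}<\infty$ precisely when $|s|<1$ --- which is exactly the hypothesis $s\in(-1,1)$ --- Young's inequality for $\ell^1$ gives $\|a\circ X\|_{\dot B^s_{p,1}}\lesssim e^{CV}\sum_k 2^{ks}\|\Delta_k a\|_{L^p}=e^{CV}\|a\|_{\dot B^s_{p,1}}$, which is the desired bound.

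It remains to prove the two off-diagonal estimates, and this is where the work lies. For $j\ge k$ the bound is the easy one: by Bernstein's inequality (Lemma \ref{bernstein}) $\|\nabla\Delta_k a\|_{L^p}\lesssim2^k\|\Delta_k a\|_{L^p}$, and since $\nabla\big((\Delta_k a)\circ X\big)=(DX)^{\mathsf T}\,(\nabla\Delta_k a)\circ X$, the bi-Lipschitz and Jacobian bounds give $\|\nabla\big((\Delta_k a)\circ X\big)\|_{L^p}\lesssim e^{CV}2^k\|\Delta_k a\|_{L^p}$; combining with $\|\Delta_j F\|_{L^p}\lesssim2^{-j}\|\nabla F\|_{L^p}$ produces the gain $2^{-(j-k)}$. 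The genuinely delicate case, and the main obstacle, is $j<k$, where one must extract decay from the oscillation of the high-frequency factor $\Delta_k a$ rather than from its gradient. Here I would write $\Delta_k a$ in divergence form, $\Delta_k a=\sum_l\partial_l\psi_l$ with $\|\psi_l\|_{L^p}\lesssim2^{-k}\|\Delta_k a\|_{L^p}$ (again by Bernstein, using that the spectrum of $\Delta_k a$ avoids the origin), and integrate by parts against the smooth, mean-zero kernel of $\Delta_j$, each integration by parts costing a factor $2^j$ while gaining $2^{-k}$. The obstruction to doing this naively is that $X$ is only Lipschitz, so the coefficients $(DX)^{-1}$ produced when transferring derivatives through the composition are merely bounded; I would circumvent this either by a paraproduct decomposition or by a commutator estimate that uses only the $L^\infty$ bound on $(DX)^{-1}$, which still yields the single gain $2^{-(k-j)}$ --- this is exactly the step in which the lower restriction $s>-1$ is consumed. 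Applying the resulting operator bound with $f=a$ gives \eqref{flow estimate}, and taking the supremum over $t\in(0,T)$ yields the stated membership $a\circ X_u\in L^\infty\big((0,T);\dot B^s_{p,1}(\mathbb R^n)\big)$.
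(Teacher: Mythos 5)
Your overall architecture (reduce to a fixed-time operator bound, Gronwall estimates for the flow, Littlewood--Paley decomposition with off-diagonal decay $2^{-|j-k|}$, and the summation argument that consumes exactly $|s|<1$) is the right one, and your treatment of the case $j\ge k$ is complete and correct. Note that this paper gives no proof of the lemma at all --- it is quoted from \cite{HPP12} --- so your attempt must stand on its own, and it has a genuine gap at precisely the step you yourself call ``where the work lies'', namely $j<k$. Writing $\Delta_k a=\mathrm{div}\,\Psi_k$ with $\|\Psi_k\|_{L^p}\lesssim 2^{-k}\|\Delta_k a\|_{L^p}$ is fine, but when you push the divergence through the composition the exact identity is the Piola identity $J\,\big((\mathrm{div}\,\Psi_k)\circ X\big)=\mathrm{div}\big(J\,(DX)^{-1}(\Psi_k\circ X)\big)$ with $J=\det DX$, so what you must actually estimate is $\Delta_j\big(J^{-1}\,\mathrm{div}\,W\big)$ with $W\in L^p$ and $J^{-1}$ merely bounded. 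Neither of your proposed fixes closes this. A paraproduct decomposition of $J^{-1}\mathrm{div}\,W$ controls $T_{J^{-1}}\mathrm{div}\,W$, but the terms $T_{\mathrm{div}\,W}J^{-1}$ and $R(J^{-1},\mathrm{div}\,W)$ require summing $\sum_{j'\gtrsim j}\|\Delta_{j'}J^{-1}\|_{L^\infty}\,2^{j'}\|W\|_{L^p}$, which diverges when all one knows is $J^{-1}\in L^\infty$; and no commutator gain is available either, because multiplication by a generic bounded function is unbounded on every $\dot B^{\sigma}_{p,q}$ with $\sigma\neq 0$.

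This is not merely a presentational gap: your explicit claim that ``no divergence-free hypothesis is needed'' is exactly what fails on the range $s\in(-1,0]$, which is the half of the lemma that needs the high-to-low decay. When $\mathrm{div}\,u=0$ the flow is measure preserving, $J\equiv 1$, the Piola identity reads $(\mathrm{div}\,\Psi_k)\circ X=\mathrm{div}\big((DX)^{-1}\,\Psi_k\circ X\big)$ with no rough prefactor, and then $\|\Delta_j\,\mathrm{div}\big((DX)^{-1}\Psi_k\circ X\big)\|_{L^p}\lesssim 2^{j}e^{CV}\|\Psi_k\|_{L^p}$ gives the gain $2^{j-k}$ and your argument closes. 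Without measure preservation the conclusion itself is in jeopardy for negative $s$: in one dimension, boundedness of $f\mapsto f\circ X$ on $\dot B^{s}_{p,1}$ with $s\in(-1,0)$ forces, by duality, boundedness of multiplication by the Jacobian of $X^{-1}$ on $\dot B^{-s}_{p',\infty}$; choosing $X^{-1}(z)=\int_0^z m$ with $m$ a bounded, bounded-below lacunary-type function (and noting that any such increasing bi-Lipschitz $X$ is the time-one flow of an $L^1_t(Lip)$ field via the interpolation $X_t=(1-t)\mathrm{id}+tX$), one sees this multiplication can fail to be bounded. So the repair is to add the hypothesis $\mathrm{div}\,u=0$ --- harmless here, since in this paper the lemma is only ever applied to solutions of (\ref{equation}), and this is the setting of \cite{HPP12} --- and to replace the hand-waved step by the Piola identity with unit Jacobian.
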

And we obtain the  following estimate of
$\|a\|_{L_t^{\infty}(\mathscr{M}(\dot{B}_{p,1}^s))}$ by applying the above
Lemma.
\begin{prop}
Let $s\in(-1,1)$ and $p\geq1$. Let $u\in L_T^1(Lip(\mathbb{R}^3))$
and $a_0\in \mathscr{M}(\dot{B}_{p,1}^s(\mathbb{R}^3))$.Then
(\ref{tranport equation}) has a unique solution $a\in
L^{\infty}([0,T];\mathscr{M}(\dot{B}_{p,1}^s(\mathbb{R}^3)))$ so that
\begin{equation}\label{transport equation eatimate}
\|a\|_{L_t^{\infty}(\mathscr{M}(\dot{B}_{p,1}^s))}\leq
\|a_0\|_{\mathscr{M}(\dot{B}_{p,1}^s)}\exp \{C(\|
u^h\|_{L_t^1(\dot{B}_{p,1}^{1+\frac{3}{p}})} +\|
u^3\|_{L_t^1(\dot{B}_{p,1}^{1+\frac{3}{p}})}^{1-\frac{1}{p}}\|
u^h\|_{L_t^1(\dot{B}_{p,1}^{1+\frac{3}{p}})}^{\frac{1}{p}})\}
\end{equation}
for any $t\in [0,T]$.
\end{prop}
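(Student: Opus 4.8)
The plan is to reduce the bound in the multiplier norm to the composition (flow) estimate of Lemma \ref{flow estimate Lemma}, and then to control the Lipschitz norm of $u$ via the anisotropic inequality of Lemma \ref{u^3 (1)}. We may assume $u\in L_T^1(\dot B_{p,1}^{1+\frac3p}(\mathbb{R}^3))$, since otherwise the right-hand side of (\ref{transport equation eatimate}) is infinite; by the embedding $\dot B_{p,1}^{1+\frac3p}\hookrightarrow Lip$ this is consistent with the hypothesis $u\in L_T^1(Lip)$, so the flow map $X_u(t,\cdot)$ of (\ref{flow}) is well defined and, thanks to $\mathrm{div}\,u=0$, is a volume-preserving diffeomorphism. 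The unique solution of (\ref{tranport equation}) is then given in Lagrangian form by $a(t)=a_0\circ X_u^{-1}(t,\cdot)$, which already settles existence and uniqueness; it remains to establish the quantitative bound.

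First I would write the multiplication operator by $a(t)$ as a conjugate of the multiplication operator by $a_0$. Setting $T_\phi g:=g\circ\phi$ and $X:=X_u(t,\cdot)$, for any test function $g$ the identity $a(t)\,g=\big(a_0\,(g\circ X)\big)\circ X^{-1}=T_{X^{-1}}\big(a_0\,T_X g\big)$ holds, so that on $\dot B_{p,1}^s$ one has $M_{a(t)}=T_{X^{-1}}\,M_{a_0}\,T_X$, where $M_b$ denotes multiplication by $b$. Since $\|M_b\|_{\mathcal L(\dot B_{p,1}^s)}=\|b\|_{\mathscr M(\dot B_{p,1}^s)}$ by Definition \ref{multiplier space definition}, this yields
\begin{equation*}
\|a(t)\|_{\mathscr M(\dot B_{p,1}^s)}\leq \|T_{X^{-1}}\|_{\mathcal L(\dot B_{p,1}^s)}\,\|a_0\|_{\mathscr M(\dot B_{p,1}^s)}\,\|T_X\|_{\mathcal L(\dot B_{p,1}^s)}.
\end{equation*}
Both composition operators are bounded on $\dot B_{p,1}^s$ by Lemma \ref{flow estimate Lemma}: for $T_X$ this is the lemma directly, while for $T_{X^{-1}}$ one applies it to the flow generated by the time-reversed field, whose Lipschitz integral coincides with that of $u$. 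Each operator norm is thus controlled by $\exp\{C\int_0^t\|\nabla u\|_{L^\infty}\,d\tau\}$ (absorbing the multiplicative constants of Lemma \ref{flow estimate Lemma} into the generic $C$), whence $\|a(t)\|_{\mathscr M(\dot B_{p,1}^s)}\lesssim \|a_0\|_{\mathscr M(\dot B_{p,1}^s)}\exp\{C\int_0^t\|\nabla u\|_{L^\infty}\,d\tau\}$.

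It then remains to bound $\int_0^t\|\nabla u(\tau)\|_{L^\infty}\,d\tau$ by the quantity in the exponent of (\ref{transport equation eatimate}). I split $\nabla u=(\nabla u^h,\nabla u^3)$. For $\nabla u^h$ and for the vertical derivative $\partial_3u^3=-\mathrm{div}_hu^h$, the embedding $\dot B_{p,1}^{\frac3p}\hookrightarrow L^\infty$ gives $\|\nabla u^h\|_{L^\infty}+\|\partial_3u^3\|_{L^\infty}\lesssim\|u^h\|_{\dot B_{p,1}^{1+\frac3p}}$, whose time integral is $\lesssim\|u^h\|_{L_t^1(\dot B_{p,1}^{1+\frac3p})}$. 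For the horizontal derivative $\nabla_hu^3$ I invoke (\ref{u^3 2}) with $m=r=\infty$: multiplying by $2^j$ and summing over $j$ yields $\|\nabla_hu^3\|_{L_t^1(L^\infty)}\lesssim\|u^3\|_{L_t^1(\dot B_{p,1}^{1+\frac3p})}^{1-\frac1p}\|u^h\|_{L_t^1(\dot B_{p,1}^{1+\frac3p})}^{\frac1p}$. Collecting these bounds reproduces precisely the exponent of (\ref{transport equation eatimate}), and inserting this into the estimate of the previous paragraph completes the proof.

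The main obstacle is this final step: controlling $\nabla_hu^3$ in $L_t^1(L^\infty)$ so that the potentially large vertical velocity $u^3$ enters only through the reduced power $1-\frac1p$. This is exactly where the anisotropic Bernstein–Gagliardo–Nirenberg estimate of Lemma \ref{u^3 (1)}, which rests on the divergence-free relation $\partial_3u^3=-\mathrm{div}_hu^h$, is indispensable, and it is what produces the mixed interpolated form of the exponent instead of the plain $\|u\|_{L_t^1(\dot B_{p,1}^{1+\frac3p})}$.
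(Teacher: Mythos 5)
Your proposal is correct and follows essentially the same route as the paper's own proof: the Lagrangian representation $a(t)=a_0\circ X_u^{-1}$, the conjugation identity behind the multiplier bound (which the paper writes test-function-wise as $\Psi a(t)=\bigl((\Psi\circ X_u)\,a_0\bigr)\circ X_u^{-1}$ rather than in your operator form $M_{a(t)}=T_{X^{-1}}M_{a_0}T_X$), Lemma \ref{flow estimate Lemma} applied to both compositions, and then (\ref{u^3 2}) with $m=r=\infty$ to control the $u^3$ contribution to $\int_0^t\|\nabla u\|_{L^\infty}\,d\tau$ by the mixed power $\|u^3\|_{L_t^1(\dot{B}_{p,1}^{1+\frac{3}{p}})}^{1-\frac{1}{p}}\|u^h\|_{L_t^1(\dot{B}_{p,1}^{1+\frac{3}{p}})}^{\frac{1}{p}}$. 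The only cosmetic difference is that you peel off $\partial_3u^3=-\mathrm{div}_hu^h$ and bound it by the $u^h$ term, running only $\nabla_hu^3$ through the anisotropic estimate, whereas the paper feeds the whole $\nabla u^3$ through it; both are equally valid.
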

\begin{proof} Thanks to (\ref{flow}), we deduce from
(\ref{tranport equation}) that $a(t,x)=a_0(X_u^{-1}(t,x))$. Then
thanks to Definition \ref{multiplier space definition} and Lemma
\ref{flow estimate Lemma}, we obtain
\begin{equation*}
\begin{split}
\|a(t)\|_{\mathscr{M}(\dot{B}_{p,1}^s)}&\leq
\sup\limits_{\|\Psi\|_{\dot{B}_{p,1}^s}=1}\|\Psi a(t)\|_{\dot{B}_{p,1}^s}
\\
&\leq \sup\limits_{\|\Psi\|_{\dot{B}_{p,1}^s}=1}\|(\Psi\circ
X_u(t)a_0)\circ X_u^{-1}(t)\|_{\dot{B}_{p,1}^s}\\
&\leq C\sup\limits_{\|\Psi\|_{\dot{B}_{p,1}^s}=1}\|(\Psi\circ
X_u(t))a_0\|_{\dot{B}_{p,1}^s}\exp\{C\int_0^t\|\nabla
u(\tau)\|_{L^{\infty}}d\tau\}\\
&\leq
C\|a_0\|_{\mathscr{M}(\dot{B}_{p,1}^s)}\exp\{C\int_0^t\|\nabla
u(\tau)\|_{L^{\infty}}d\tau\}\sup\limits_{\|\Psi\|_{\dot{B}_{p,1}^s}=1}\|\Psi\circ
X_u(t)\|_{\dot{B}_{p,1}^s}\\
&\leq C\|a_0\|_{\mathscr{M}(\dot{B}_{p,1}^s)}\exp\{C\int_0^t\|\nabla
u(\tau)\|_{L^{\infty}}d\tau\}.
\end{split}
\end{equation*}
Then we obtain
\begin{equation}\label{transport mid eatimate}
\begin{split}
\|a(t)\|_{\mathscr{M}(\dot{B}_{p,1}^s)}&\leq
C\|a_0\|_{\mathscr{M}(\dot{B}_{p,1}^s)}\exp\{C(\int_0^t\|\nabla
u^h(\tau)\|_{L^{\infty}}d\tau+\int_0^t\|\nabla
u^3(\tau)\|_{L^{\infty}}d\tau)\}
\end{split}
\end{equation}
whereas
\begin{equation*}
\begin{split}
\int_0^t\|\nabla u^h(\tau)\|_{L^{\infty}}d\tau \lesssim\|u^h\|_{L_t^1(\dot{B}_{p,1}^{1+\frac{3}{p}})}
\end{split}
\end{equation*}
and
\begin{equation*}
\begin{split}
\int_0^t\|\nabla u^3(\tau)\|_{L^{\infty}}d\tau\lesssim\|
u^3\|_{L_t^1(\dot{B}_{p,1}^{1+\frac{3}{p}})}^{1-\frac{1}{p}}\|
u^h\|_{L_t^1(\dot{B}_{p,1}^{1+\frac{3}{p}})}^{\frac{1}{p}},
\end{split}
\end{equation*}
using inequality (\ref{u^3 2}) with $m=r=\infty, q=\infty$.
Thus substituting the above estimates into the inequality
(\ref{transport mid eatimate}), we complete the proof of Proposition
\ref{transport equation eatimate}.
\end{proof}

Based on the above estimate and the estimate of the pressure
obtained by the same method as Proposition \ref{pressure}, we
complete the proof of Theorem \ref{main theorem} by the similar
arguments as the proof of Theorem \ref{main}, and omit the details.

\section*{Acknowledgements}
  This work is
  partially supported by  NSF of
China under Grants 11271322,  11331005 and 11271017, National Program for
Special Support of Top-Notch Young Professionals, Program for New Century
Excellent Talents in University NCET-11-0462, the Fundamental Research
Funds for the Central Universities (2012QNA3001).


\begin{thebibliography}{99}
\bibitem{A07}H. Abidi, \'{E}quation de Navier-Stokes avec densit\'{e}
et viscosit\'{e} variables dans 1' espace critique, Rev. Mat.
Iberoam. 23 (2007) 537-586.
\bibitem{AGZ11} H. Abidi, G. L. Gui, P. Zhang. On
the decay and stability to global solutions of the 3-D inhomogeneous
Navier-Stokes equations, Comm.Pure Appl. Math. 64 (2011) 832-881.
\bibitem{AGZ12} H. Abidi, G. L. Gui, P. Zhang, On the wellposedness of
3-D inhomogeneous Navier-Stokes equations in the critical spaces.
Arch. Ration. Mech. Anal. 204 (2012) 189-230.
\bibitem{AGZ13} H. Abidi, G. L. Gui, P. Zhang, Wellposedness of 3-D
inhomogeneous Navier-Stokes equations with highly oscillating
initial velocity field, J. Math. Pures Appl. 100 (2013) 166-203.
\bibitem{AP07} H. Abidi, M. Paicu, Existence globale pour un fluide
inhomog\'{e}ne, Ann. Inst. Fourier (Grenoble) 57 (2007) 883-917.
\bibitem{AV73} S. N. Antontsev, A. V. Kazhikhov, Matematicheskie
voprosy dinamiki neodnorodnykh zhidkoste\v{\i}. (Russian)
[Mathematical questions of the dynamics of nonhomogeneous fluids]
Lecture notes, Novosibirsk State University. Novosibirsk.
Gosudarstv. Univ., Novosibirsk, 1973. 121 pp.
\bibitem{AV90} S. N. Antontsev, A. V. Kazhikhov, V. N. Monakov,
Boundary value problems in mechanics of nonhonogeneous fluids.
Studies in Mathematics and Its Applications, 22. North-Holland,
Amsterdam, 1990.
\bibitem{RD11} H. Bahouri, J. Y. Chemin, R. Danchin, Fourier Analysis and Nonlinear
Parditial Differential Equations, Grundlehren Math. Wiss. , vol.
343, Springer-Verlag, Berlin, Heidelberg, 2011.
\bibitem{BP} J. Bourgain, N. Pavlovi\'{c}, Ill-posedness of the Navier-Stokes
equations in a critical space in 3D, J. Func. Anal. 255 (2008)
2233-2247.
\bibitem{CMZ13} Q. Chen, C. Miao, Z. Zhang, on the ill-posedness of
the compressible Navier-Stokes equations in the critical Besov
spaces. arXiv. 1109. 6092v1.
\bibitem{RD01} R. Danchin, Local theory in critical spaces for
compressible viscous and heat-conducting gases, Comm. Partial
Differential Equations 26 (2001) 1183-1233.
\bibitem{RD03} R. Danchin, Density-dependent incompressible viscous
fluids in critical spaces, Proc. Roy. Soc. Edinburgh Sect. A 133
(2003) 1311-1334.
\bibitem{RD04} R. Danchin, Local and global well-posedness results for
flows of inhomogeneous viscous fluids, Adv. Differential Equations 9
(2004) 353-386.
\bibitem{DM12} R. Danchin, P. B. Mucha, A Lagrangian approach for the
incompressible Navier-Stokes equations with variable density, Comm.
Pure. Appl. Math. 65 (2012) 1458-1480.
\bibitem{PL89} R. J. Diperna, P. L. Lions, Equations diff\'{e}rentielles
ordinaires et \'{e}quations de transport avec des coefficients
irr\'{e}guliers, in: S\'{e}minaire EDP 1988-1989, Ecole
Polytechnique, Palaiseau, 1989.
\bibitem{FK64} H. Fujita, T. Kato, On the Navier-Stokes initial value
problem I, Arch. Ration. Mech. Anal. 16 (1964) 269-315.
\bibitem{G} P. Germain, The second iterate for the Navier-Stokes
equation, J. Func. Anal. 255 (2008) 2248-2264.
\bibitem{GP10} G. Gui, P. Zhang, Stability to the global solutions of
3-D Navier-Stokes equations, Adv. Math. 225 (2010) 1248-1284.
\bibitem{HPP12} J. Huang, M. Paicu and P. Zhang, Global solutions to
the 3-D incompressible inhomogeneous Navier-Stokes system with rough
density, Progress in Nonlinear Differential Equations and their
Applications, Volume 84 (2013) 159-180.
\bibitem{AV74} A. V. Kazhikov, Solvability of the initial-boundary value
problem for the equations of the motion of an inhomogeneous viscous
incompressible fluid, Dokl. Akad. Nauk SSSR 216 (1974) 1008-1010 (in
Russian).
\bibitem{OA75} O. A. Ladyzhenskaya, V. A. Solonnikov, The unique
solvability of an initial-boundary value problem for visicous
incompressible inhomogeneous fluids, in: Boundary Value Problems of
Mathematical Physics, and Related Questions of the Theory of
Functions, 8. Zap. Nauchn. Sem. Leningrad. Otdel. Mat. Inst.
Steklov. (LOMI) 52 (1975) 52-109, 218-219 (in Russian).
\bibitem{PL96} P. L. Lions, Mathematical Topics in Fluid
Mechanics. Vol. 1. Incompressible Models, Oxford Lecture Ser. Math.
Appl. , vol.3, Oxford Science Publications, The Clarendon Press,
Oxford University Press, New York, 1996.
\bibitem{VG09} V. G. Maz'ya,  T. O. Shaposhnikova, Theory of Sobolev
multipliers. Grundlehren der Mathematischen Wissenschaften, 337.
Spring, Berlin, 2009.
\bibitem{NL1966} L. Nirenberg,
  An extended interpolation inequality, Ann. Scuola Norm.
Sup. Pisa  20 (1966) 733-737.
\bibitem{Paicu05} M. Paicu, \'{E}quation anisotrope de Navier-Stokes
dans des espaces critiques, Revista Mathem\'{a}tica Iberoamericana,
21 (2005) 179-235.
\bibitem{PP11} M. Paicu, P. Zhang, Global solutions to the 3-D
incompressible anisotropic Navier-Stokes system in the critical
spaces, Comm. Math. Phys. 307 (2011) 713-759.
\bibitem{PP12} M. Paicu, P. Zhang, Global solutions to the 3-D
incompressible inhomogeneous Navier-Stokes system. J. Funct. Anal.
262 (2012) 3556-3584.
\bibitem{S90} J. Simon, Nonhomogeneous viscous incompressible
fluids: existence of velocity, density, and pressure. SIAM J. Math.
Anal. 21 (1990) 1093-1117.
\bibitem{ZT09} T. Zhang, Global wellposedness prolem for the 3-D
incompressible anisotropic Navier-Stokes equations in an anisotropic
space, Comm. Math. Phys. 287 (2009) 211-224.
\bibitem{ZT14} T. Zhang, Global strong solutions for equations related
to the incompressible viscoelastic fluids with a class of large
initial data, Nonlinear Analysis, 100 (2014) 59-77.

\end{thebibliography}
\end{document}